\newtheorem{theo}{Theorem}
\newtheorem{lem}[theo]{Lemma}
\newtheorem{prop}[theo]{Proposition}
\newtheorem{defn}{Definition}
\newcommand{\E}{\ensuremath{\mathbb {E}}}
\renewcommand{\P}{\ensuremath{\mathbb {P}}}
\newcommand{\N}{{\mathbb N}}
\newcommand{\DD}{{\mathcal D}}
\newcommand{\A}{{\mathcal A}}
\newcommand{\B}{{\mathcal B}}
\newcommand{\F}{{\mathcal F}}
\def\E{\mathbb{E}}
\def\F{{\mathcal F}}
\def\R{\mathbb{R}}
\begin{document}

\title
[Almost everywhere of  series]
{Study of almost everywhere convergence of  series by means of martingale methods }

\author{Christophe Cuny }
\address{Laboratoire MAS, Ecole Centrale de Paris, Grande Voie des Vignes,
92295 Chatenay-Malabry cedex, FRANCE}
\email{christophe.cuny@ecp.fr}
\author{Ai Hua Fan}
\address{Fan Ai-Hua: LAMFA UMR 7352, CNRS \\
	Facult\'e des Sciences \\
	Universit\'e de Picardie Jules Verne \\
	33, rue Saint Leu \\
	80039 Amiens CEDEX 1, France}
\email{ai-hua.fan@u-picardie.fr}
\dedicatory{}

\subjclass{Primary: ; Secondary: }
\keywords{}

\begin{abstract} Martingale methods are  used to study the almost everywhere convergence
of general function series. Applications are given to ergodic series, which improves recent results of Fan \cite{FanETDS},  and to dilated series, including Davenport series, which
completes results of Gaposhkin \cite{Gaposhkin67} (see also \cite{Gaposhkin68}). Application is also given to the almost everywhere convergence with respect to Riesz products of lacunary series.
  \end{abstract}

\maketitle

\section{Introduction}
Let us first state two problems which motivate the investigation in this paper. One comes from the classical
analysis and the other from the ergodic theory.

Given a function $f\in L^1(\mathbb{R}/\mathbb{Z})$
(or equivalently a locally integrable $1$-periodic function on the real line $\mathbb{R}$) such that $\int_0^1 f(x) d x=0$, an increasing sequence of
positive integers $(n_k)_{k\ge 0}\subset \mathbb{N}$ and a sequence of complex numbers $(a_k)_{k\ge 0}\subset \mathbb{C}$, one would like to investigate the convergence (almost everywhere convergence or $L^1$-convergence etc) of the following series, called {\em dilated series},
\begin{equation} \label{DS}
     \sum_{k=0}^\infty a_k f(n_k x).
\end{equation}
If the series 
converges almost everywhere (a.e.) whence $(a_k)\in \ell^2$, we say that
$\{f(n_k x)\}$ is a {\em convergence system}.  The famous Carleson theorem states that
both $\{\sin n x\}$ and $\{\cos n x\}$ are convergence systems. In general, one should find suitable conditions
on $f$ and on $(n_k)$ for $\{f(n_k x)\}$ to be a convergence system. This is a long standing problem. One may consult the survey by Berkes and Weber \cite{BW} and, for recent progresses, Weber \cite{Weber} and the references
therein.

The other problem is the almost everywhere convergence  of the so-called {\em ergodic series}
\begin{equation}\label{ES0}
     \sum_{k=0}^\infty a_k f(T^k x)
\end{equation}
where $T$ is a measure-preserving map on a probability space $(X, \mathcal{B}, \mu)$ and $f\in L^1(\mu)$ such that $\int f d\mu =0$. To be more precise, we want to find
sufficient conditions on the dynamical system $(X, \mathcal{B}, \mu, T)$
and/or on $f$, such that the series
\eqref{ES0} converges for \emph{any} sequence $(a_n)_{n\in \N}\in \ell^p$,
where $1\le p\le 2$ is fixed (our main interest is in the case $p=2$).
Sufficient conditions for the a.e. convergence with specific (regular) sequences $(a_n)_{n\in \N}$ may be found for instance in \cite{Cuny-a.e.}.
The fact that conditions have to be imposed to ensure the a.e. convergence
of \eqref{ES0} may be illustrated by the following result of Dowker and Erd\"{o}s \cite{DE}:  if $\mu$ is an non-atomic ergodic measure, for every positive sequence $(a_n)_{n\in \N}$ such that $\sum_{k\in \N} a_k=\infty$, there exists $f\in L^\infty(\mu)$
with $\int f d\mu=0$ such that $\sum_{k\in \N} a_k f(T^k x)$ diverges a.e. Some sufficient conditions are recently found in \cite{FanETDS}
for $\{f\circ T^n\}$ be to be a convergence system.

\medskip

In this paper, we will study series in a general setting. The above situations are special examples.  Let $(\Omega, \A, \mathbb{P})$ be a probability space. Let $(Z_n)_{n\ge 0}\subset L^p(\Omega,\A,\P)$, $p\ge 1$,  be a
sequence of $L^p$-integrable random variables with $\mathbb{E} Z_n =0$. We shall study the almost sure convergence of the
random series
\begin{equation}
\sum_{n=0}^\infty Z_n(\omega).
\end{equation}

Suppose that we are given an increasing  filtration $(\A_n)_{n\in \N}$ such that $\A_0=\{\emptyset,\Omega\}$ and $\A_\infty= \A$ where
$\A_\infty:=\bigvee_{n=0}^\infty \A_n$.  We will analyze random variables
using this filtration.  For every $n\in \N\cup\{\infty\}$, denote $\E^n:= \E(\cdot|\A_n)$ and introduce the operator
$$\DD_n=\E^{n+1}-\E^{n}.
$$
If $Z\in L^p(\Omega,\A,\P)$ with $\E Z=0$ ($p\ge 1$), we have the decomposition
\begin{equation*}\label{analysis}
Z=\sum_{n=0}^\infty \DD_{n}Z \, ,
\end{equation*}
where the convergence takes place a.e. and in $L^p$. This is what we mean by the analysis based on the filtration $(\A_n)_{n\in \N}$ and it is a simple consequence of Doob's convergence theorem of martingales.

\marginpar{change}One of our main results is the following theorem, which is a special case of Theorem \ref{theo-gen} corresponding to $p=2$.

\smallskip

\noindent {\bf Theorem A.}
{\it
Let $(Z_n)_{n\in \N }\subset L^2(\Omega,\A,\P)$ be such that
 $\E Z_n=0$ for every $n\in \N$. Then the series $\sum_{n\in \N}Z_n $
converges $\P$-a.s. and in $L^2(\Omega,\A,\P)$ under the following set of conditions
\begin{equation}\label{series1} \sum_{k=0}^\infty\Big( \sum_{n=0}^\infty \|\DD_{n+k}
Z_{n}\|_2^2\Big)^{1/2}<\infty
\end{equation} and
\begin{equation}\label{series2}
\sum_{k=1}^\infty\Big( \sum_{n =0}^\infty \|\DD_n
Z_{n+k}\|_2^{2}\Big)^{1/2}<\infty\,.
\end{equation}
}

One of ingredients in the proof of Theorem A is the Doob maximal inequality of martingales.
Assume that $\{Z_n\}\subset L^2(\Omega, \mathcal{A}, P)$ are independent and
$\mathbb{E} Z_n =0$ for all $n$.
It follows from Theorem A that $\sum \mathbb{E}|Z_n|^2<\infty$ implies the almost sure convergence of $\sum Z_n$. This is a trivial application of Theorem A, because this known result of Kolmogorov is actually covered by the Doob convergence theorem of martingales.

We can also make an analysis using a decreasing filtration.
Suppose that we are given a decreasing  filtration $(\B_n)_{n\in \N}$ such that
$\B_0=\A$.  Let $\B_\infty:=\bigcap_{n=0}^\infty \B_n$.  For every $n\in \N\cup\{\infty\}$, denote $\mathbb{E}_n =\mathbb{E} (\cdot | \mathcal{B}_n)$
and introduce
$$\mathfrak{d}_n =\E_{n}-\E_{n+1}.
$$
Let $p\ge 1$.
For
$Z\in L^p(\Omega,\A,\P)$ with $\E (Z|\B_\infty)=0$ which implies $\mathbb{E} Z=0$,   we have the decomposition
\begin{equation*}\label{analysis2}
Z=\sum_{n=0}^\infty \mathfrak{d}_{n}Z \, ,
\end{equation*}
where the series converges in $L^p$ and $\P$-a.s.
\medskip

\noindent{\bf Theorem B.}
Let $(Z_n)_{n\in \N }\subset L^2(\Omega,\A,\P)$ be such that
 $\E_\infty (Z_n)=0$ for every $n\in \N$. Then the series $\sum_{n\in \N}g_n $
converges $\P$-a.s. and in $L^2(\Omega,\A,\P)$ under the following conditions
\begin{equation}\label{series1+} \sum_{k=0}^\infty\Big( \sum_{n=0}^\infty \|\mathfrak{d}_{n+k}
Z_{n}\|_2^2\Big)^{1/2}<\infty
\end{equation} and
\begin{equation}\label{series2+}
\sum_{k=1}^\infty\Big( \sum_{n =0}^\infty \|\mathfrak{d}_n
Z_{n+k}\|_2^2\Big)^{1/2}<\infty\,.
\end{equation}

 As an application of Theorem A, we have the following  theorem, in which the condition (\ref{condition0}) is sharp (see Proposition \ref{example-gaposhkin-dyn}).

 Recall first that a sequence of positive integers
 $(n_k)_{k\in \N}$ is said to be {\em  Hadamard lacunary} if $\inf_k n_{k+1}/n_k\ge q >1$ for some $q$ and that the {\em modulus of $L^2$-continuity} of $f \in L^2(\mathbb{R}/\mathbb{Z})$ is defined by
 $\omega_2
 (\delta, f) = \sup_{0\le h \le \delta}\|f(\cdot + h)- f(\cdot)\|_2$.

\smallskip

\noindent {\bf Theorem C.}
{\it
Suppose that  $f\in L^2(\mathbb{R}/\mathbb{Z})$ satisfies $\int f(x) d x=0$ and
\begin{equation}\label{condition0}
\sum_{n\in \N} \frac{\omega_2(2^{-n},f)}{\sqrt{n}}<\infty
\end{equation}
and that
$(n_k)_{k\in \N}$ is Hadamard lacunary. Then
$\{f(n_k x)\}$ is a convergence system.
}
\smallskip

Let us look at a very interesting special system $\{f(n_k x)\}$ where $f$ is a Davenport function. Let $\lambda >0$. The function
$$
f_\lambda(x) = \sum_{m=1}^\infty \frac{\sin 2\pi m x}{m^\lambda}
$$
is well defined because the series converges everywhere. It is  called Davenport function.
When $\lambda >1/2$, it is $L^2$-integrable and
$\{f_\lambda(nx)\}_{n\ge 1}$ is a complete system in $L^2([0, 1])$ (\cite{Wintner}). When $\lambda >1$, $\{f_\lambda(nx)\}_{n\ge 1}$ is even
a Riesz basis  (\cite{HLS, LS}). 
 When $1/2<\lambda \le 1$, $\{f_\lambda(nx)\}_{n\ge 1}$ is not a Riesz basis, but Br\'emont proved that $\{f_\lambda(n_kx)\}_{n_k\ge 1}$
is a Riesz sequence (see Definition \ref{Riesz}) for any Hadamard lacunary sequence $\{n_k\}$.

\medskip

\noindent {\bf Theorem D}
{\it
	Let $\lambda >1/2$. Suppose that $\{n_k\}\subset \mathbb{N}$ is lacunary in the sense of Hadamard.
	Then the so-called Davenport series $\sum_{k=1}^\infty a_k f_\lambda(n_k x)$
	converges almost everywhere if ond only if $\sum_{k=1}^\infty |a_k|^2 <\infty$.
}

\smallskip
In this special case of Davenport series,  the condition
$\sum_{k=1}^\infty |a_k|^2 <\infty$ is also proved necessary for the almost everywhere convergence. Both the sufficiency and necessity are new.
 \medskip

Now let us give an application of Theorem B. Consider a measure-theoretic dynamical system
$(X, \mathcal{B}, \mu T)$.
Let $L$ be the associated transfer (or Perron-Frobenius) operator, which is defined by
 \begin{equation}\label{perron-frobenius}
     \int f \cdot Lg d\mu = \int f\circ T \cdot g d\mu \qquad (\forall f\in L^\infty(\mu), \forall g \in L^1(\mu)).
 \end{equation}
 As an application of Theorem B, we have the following theorem, in which the condition (H2) is
sharp to some extent (see Proposition \ref{example-gaposhkin}).
\medskip

\noindent {\bf Theorem E}
{\it
Assume that $(X, \mathcal{B}, T, \mu)$ is an ergodic measure-preserving dynamical system and $f \in L^2(\mu)$. Suppose
      \\
      \ \indent {\rm (H1)}\ \ $
      \lim_{m\to \infty} \mathbb{E} (f|T^{-m}\mathcal{B})=0$ a.e., which implies $\int f d\mu=0$; \\
      \ \indent {\rm (H2)} \ $
          \sum_{n=1}^\infty \frac{\|L^n f\|_2}{\sqrt n } <\infty
      $\\
      Then $\{f(T^n x)\}$ is a convergence system.
      }
      \medskip

This theorem improves a result in \cite{FanETDS}, where the norm $\|\cdot\|_\infty$ was used instead of the norm $\|\cdot\|_2$.
\medskip

The paper is organized as follows. In Section \ref{SectIncreasing} we performs an analysis using increasing filtrations. Theorem A will be proved there, together with some
more general results. Conditions in Theorem A will be converted into some more practical conditions and divergence will also discussed.    Section \ref{SectDecreasing} is parallel to Section \ref{SectIncreasing}, as Theorem B is parallel to Theorem A. There an analysis is made using
decreasing filtration. But details are omitted and some details
can also be found in \cite{FanETDS}. Application of Theorem A to
ergodic series is discussed in Section \ref{SectES} and Theorem
E will be proved there. Section \ref{SectDS} is devoted to dilated series (Theorem C and Theorem D) and Section \ref{SectRP} is devoted to lacunary series and their
almost everywhere convergence with respect to Riesz product and
to more general inhomogeneous equilibriums.

\section{Analysis using increasing filtration}\label{SectIncreasing}

Let $(\Omega, \A, \mathbb{P})$ be a probability space and $p\ge 1$. Let $(Z_n)_{n\ge 0}\subset L^p(\Omega,\A,\P)$ be a
sequence  random variables such that $\mathbb{E}Z_n =0$ for all n. We shall study the almost sure convergence of the
random series $\sum_{n=0}^\infty Z_n(\omega)$.
For $n\ge 0$, denote the partial
sum $$
S_n = \sum_{k=0}^n Z_k.
$$
Then  define the maximal functions
$$
   S^*(\omega) =\sup_{n\in \N} |S_n(\omega)|, \quad    S_N^*(\omega) =\max_{0\le n \le N} |S_n(\omega)| \ \ (\forall N \ge 0).
$$
In this section, we give an analysis of the series by using an increasing filtration. In the next section, we do the same by using an decreasing filtration. Notice that there will be minor differences
between two analysis. But applications will show that there is a question of how to choose a filtration. For example, for the dilated series, we will introduce an increasing filtration and for the ergodic series, there is a natural decreasing filtration.

\subsection{Decomposition relative to an increasing filtration}
Suppose that we are given an increasing  filtration $(\A_n)_{n\in \N}$. Assume that
$\A_0=\{\emptyset,\Omega\}$ and $\A_\infty= \A$ where
$\A_\infty:=\bigvee_{n=0}^\infty \A_n$. 
For every $n\in \N\cup\{\infty\}$, denote $\E^n:= \E(\cdot|\A_n)$ and
$$\DD_n=\E^{n+1}-\E^{n}.
$$
The operators $\DD_n$ have the following remarkable properties.
The proofs, which are easy,  are left to the reader.


\begin{lem} \label{martingale}
 Assume
$h\in  L^1(\Omega, \A, \P)$ and $f, g\in L^2(\Omega, \A, \P)$.
The operators $\DD_n$ have the following properties.\\
\indent {\rm (1)}\
For any $n\ge 0$, $\DD_n$ is $\A_{n+1}$-measurable and $\E^{n}\DD_{n} f=0$.\\
\indent {\rm (2)}\
For any distinct integers $n$ and $m$,
$\DD_n f$ and $\DD_m g$ are orthogonal.\\
\indent {\rm (3)}\ For  any $N_1<N_2$ we have
$$
     \sum_{n=N_1}^{N_2} \|\DD_n f\|_2^2 = \|\E^{N_2 +1} f - \E^{N_1} f\|_2^2.
$$
\end{lem}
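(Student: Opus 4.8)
The plan is to verify the three assertions directly from the defining relation $\DD_n=\E^{n+1}-\E^{n}$, using only the tower property of conditional expectation and the fact that the filtration $(\A_n)$ is increasing. Nothing deeper is needed; each item is a one-line consequence of standard facts, so the emphasis is on bookkeeping rather than on any genuine difficulty.

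For (1), I would first record the measurability. By definition $\E^{n+1}f$ is $\A_{n+1}$-measurable, while $\E^{n}f$ is $\A_{n}$-measurable, hence also $\A_{n+1}$-measurable because $\A_n\subset\A_{n+1}$; therefore $\DD_n f=\E^{n+1}f-\E^{n}f$ is $\A_{n+1}$-measurable. The identity $\E^{n}\DD_n f=0$ then follows from the tower property: since $\A_n\subset\A_{n+1}$ one has $\E^{n}\E^{n+1}f=\E^{n}f$, and trivially $\E^{n}\E^{n}f=\E^{n}f$, so $\E^{n}\DD_n f=\E^{n}f-\E^{n}f=0$.

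For (2), assume without loss of generality that $n<m$. The key observation is that $\DD_n f$ is $\A_{n+1}$-measurable by (1), and $n+1\le m$ forces $\A_{n+1}\subset\A_{m}$, so $\DD_n f$ is in fact $\A_m$-measurable. Conditioning on $\A_m$ and pulling the $\A_m$-measurable factor out of the conditional expectation gives
$$\E[\DD_n f\cdot\overline{\DD_m g}]=\E\big[\DD_n f\cdot\overline{\E^{m}\DD_m g}\big]=0,$$
the last equality being exactly $\E^{m}\DD_m g=0$ from (1); here I use $\E^{m}\overline{Z}=\overline{\E^{m}Z}$ for the complex-conjugation.

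For (3), I would telescope and then invoke the orthogonality just established. The partial sum telescopes to $\sum_{n=N_1}^{N_2}\DD_n f=\E^{N_2+1}f-\E^{N_1}f$, and since the summands are pairwise orthogonal by (2), the Pythagorean identity yields $\big\|\sum_{n=N_1}^{N_2}\DD_n f\big\|_2^{2}=\sum_{n=N_1}^{N_2}\|\DD_n f\|_2^{2}$; comparing the two expressions gives the claim. There is no real obstacle here, and the only points requiring a little care are the direction of the inclusion $\A_n\subset\A_{n+1}$ used repeatedly and the conjugation in the inner product computation of (2).
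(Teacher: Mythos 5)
Your proof is correct and is precisely the standard argument the paper has in mind when it leaves the verification to the reader: measurability and the tower property for (1), conditioning on $\A_m$ plus (1) for the orthogonality in (2), and telescoping combined with the Pythagorean identity for (3). No gaps.
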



The first assertion implies that for any sequence $(f_n)\in L^1(\Omega, \A, \P)$,
$(\DD_n f_n)$ is a sequence of martingale differences. The second assertions will be referred to as the
orthogonality of the martingale difference.
\medskip

For any integral random variable of zero mean, we may decompose it into martingale as follows. In the following lemma, we include an inequality of Bahr-Esseen-Rio and an inequality of Burkholder.

\begin{lem} \label{decomposition} Let
$Z\in L^p(\Omega,\A,\P)$, $p\ge 1$ with $\E Z=0$. We have the decomposition
\begin{equation}\label{decomp0}
Z=\sum_{n=0}^\infty \DD_{n}Z \, ,
\end{equation}
where the series converges in $L^p$ and $\P$-a.s. Moreover we have
\begin{equation}\label{Rio0}
\|Z\|_p\le \max(1,\sqrt{p-1}) \Big(\sum_{n=0}^\infty \|\DD_{n}Z\|_p^{p'}
\Big)^{1/p'}\, ,
\end{equation}
where $p':=\min(2,p)$ and, if $p>1$,
\begin{equation}\label{Burkholder0}
\Big\|\Big(\sum_{n=0}^\infty |\DD_{n}Z|^2\Big)^{1/2}\Big\|_p \le C_p
\|Z\|_p\, .
\end{equation}
\end{lem}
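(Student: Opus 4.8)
The plan is to prove Lemma \ref{decomposition} in three independent parts, corresponding to the three displayed assertions. The convergence statement and the two inequalities \eqref{Rio0} and \eqref{Burkholder0} rest on somewhat different classical inputs, so I would treat them separately rather than trying to bundle them together.

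\medskip

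For the decomposition \eqref{decomp0}, the key observation is that the partial sums telescope: by definition of $\DD_n = \E^{n+1} - \E^n$ we have $\sum_{n=0}^{N} \DD_n Z = \E^{N+1} Z - \E^0 Z = \E^{N+1} Z$, using $\E^0 Z = \E Z = 0$ since $\A_0 = \{\emptyset, \Omega\}$. Thus the claim that $\sum_n \DD_n Z$ converges to $Z$ in $L^p$ and a.e.\ is exactly the statement that $\E^{N+1} Z = \E(Z \mid \A_{N+1}) \to Z$ as $N \to \infty$. This is precisely Doob's martingale convergence theorem (the $L^p$ and a.e.\ convergence of the conditional expectations of a fixed integrable random variable along an increasing filtration), together with the hypothesis $\A_\infty = \A$, which guarantees the limit is $Z$ itself rather than $\E(Z \mid \A_\infty)$. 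For $p = 1$ this is Doob's theorem in $L^1$; for $p > 1$ one has in addition $L^p$-boundedness of the conditional expectations, giving $L^p$-convergence by uniform integrability. I would state this and cite Doob.

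\medskip

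For the moment inequality \eqref{Rio0}, the point is that $(\sum_{n=0}^{N}\DD_n Z)_N = (\E^{N+1}Z)_N$ is a martingale whose increments are the $\DD_n Z$, which by Lemma \ref{martingale}(1) form a martingale difference sequence. The inequality is then the Bahr--Esseen inequality in the range $1 \le p \le 2$ and the Rio inequality (the sharp $\sqrt{p-1}$ constant) in the range $p \ge 2$; one passes to the limit $N \to \infty$ using the convergence already established. I would simply invoke these named inequalities for martingale differences, checking that the constant $\max(1,\sqrt{p-1})$ and the exponent $p' = \min(2,p)$ match the two regimes. Similarly, \eqref{Burkholder0} (valid only for $p > 1$) is the Burkholder--Davis--Gundy, or more precisely the Burkholder square-function, inequality applied to the martingale $\E^{N+1}Z$ with quadratic variation $\sum_n |\DD_n Z|^2$, again passing to the limit.

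\medskip

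I do not expect a genuine obstacle here, since every piece is a direct appeal to a standard theorem once the telescoping identity identifies $(\E^{N+1}Z)_N$ as a martingale with increments $\DD_n Z$. The only points requiring a little care are (i) justifying the interchange of limit and norm / passage to $N = \infty$ in \eqref{Rio0} and \eqref{Burkholder0}, which follows from Fatou's lemma on the right-hand side together with the $L^p$-convergence from the first part, and (ii) confirming the correct constant and exponent in \eqref{Rio0} across the two ranges of $p$. These are bookkeeping matters rather than substantive difficulties, so the proof should be short.
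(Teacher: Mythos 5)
Your proposal is correct and follows essentially the same route as the paper: the telescoping identity $\sum_{n=0}^{N}\DD_n Z=\E^{N+1}Z-\E^0 Z=\E^{N+1}Z$ combined with Doob's martingale convergence theorem for \eqref{decomp0}, then the von Bahr--Esseen inequality for $1\le p\le 2$ and Rio's inequality for $p\ge 2$ for \eqref{Rio0}, and Burkholder's square-function inequality for \eqref{Burkholder0}. Nothing to add.
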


\noindent {\bf Proof.}  By assumptions, $(\E^n Z)_{n\in
\N}$ is a uniformly integrable martingale converging in $L^p$ and $\P$-a.s. to $\E^\infty Z=Z$. Hence, \eqref{decomp0} follows
 from the equality $$\sum_{n=1}^N \DD_n Z=\E^{N+1} Z -\E^0 Z
$$
where $\E^0 Z = \E(Z|\A_0) =\E Z=0$, for $\A_0=\{\emptyset, \Omega\}$.
Then, \eqref{Rio0} follows from von Bahr-Esseen \cite{BE} when
$1\le p\le 2$ and from Rio \cite{Rio} when $p\ge 2$, while \eqref{Burkholder0}
is the Burkholder inequality. $\Box$
\medskip

\noindent {\bf Remark.} The inequality \eqref{Burkholder0} is nothing but
the (reverse) Burkholder inequality. In particular, we have a converse inequality but we shall only need \eqref{Burkholder0} in the sequel.

\smallskip
We call $\DD_n Z$ the {\rm $n$-th order detail} of $Z$ with respect to $(\A_n)$.

\medskip

For any $Z_n$, we  have the following decomposition
$$
    Z_n = X_n + Y_n \quad \mbox{\rm with} \ \ X_n= Z_n -\E^n Z_n, \ \ Y_n = \E^n Z_n.
$$
Notice that $\E^n X_n =0$ i.e. $X_n$ is conditionally centered,  and $Y_n$ is $\A_n$-measurable \marginpar{rajout}with
$\E(Y_n)=0$. Our random series is thus decomposed into
two random series
$$
     \sum_{n=0}^\infty Z_n = \sum_{n=0}^\infty X_n + \sum_{n=0}^\infty Y_n.
$$
The convergence of $\sum Z_n$ is reduced to those of $\sum X_n$ and $\sum Y_n$. In the sequel, we separately study these two series.

\medskip

\subsection{Conditionally centered  series $\sum X_n$}

The maximal functions associated to the series $\sum X_n$
will be denoted by $ S_{N, X}^*$ and $S^*_X$. Recall that
 we write $p' =\min(2, p)$ for $p\ge 1$.

\begin{prop}\label{mart1}
Let $(X_n)_{n\in \N}\subset L^p(\Omega,\A,\P)$ be such that $\E^n(X_n)=0$ for every $n\in \N$ ($p> 1$).
Then, for every $N\in \N$, we have the following maximal
inequality
\begin{equation}
\left\| S_{N,X}^* \right\|_p
\le \frac{p}{p-1} \max(1,\sqrt{p-1}) \sum_{k=0}^\infty\Big( \sum_{\ell=0}^N \|\DD_{\ell+k} X_{\ell}\|_p^{p'}
\Big)^{1/p'} .
\end{equation}
Consequently, the series $\sum_{n\in \N}X_n $
converges $\P$-a.s. and in $L^p(\Omega,\A,\P)$ under the following condition:
\begin{equation}\label{main-cond}
    \sum_{k=0}^\infty\Big( \sum_{n=0}^\infty \|\DD_{n+k}
X_{n}\|_p^{p'}\Big)^{1/p'}<\infty\, .
\end{equation}
Moreover, when \eqref{main-cond} holds, we have
$S^*_X\in L^p(\Omega,\A,\P)$.
\end{prop}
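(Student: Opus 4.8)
The plan is to exploit the hypothesis $\E^n X_n=0$ to expand each $X_n$ into its martingale details and then regroup the resulting double series along diagonals. First I would apply Lemma~\ref{decomposition} to $X_n$ to write $X_n=\sum_{m\ge 0}\DD_m X_n$; since $\E^n X_n=0$, the tower property kills the low-order details, because $\E^{m+1}X_n=\E^{m+1}\E^n X_n=0$ and likewise $\E^m X_n=0$ whenever $m<n$, so $\DD_m X_n=0$ for $m<n$. Hence $X_n=\sum_{k\ge 0}\DD_{n+k}X_n$, and summing over $0\le \ell\le n$ and interchanging the finite sum over $\ell$ with the (convergent) sum over $k$ yields the diagonal decomposition
\[
S_{n,X}=\sum_{\ell=0}^n X_\ell=\sum_{k=0}^\infty M_n^{(k)},\qquad M_n^{(k)}:=\sum_{\ell=0}^n \DD_{\ell+k}X_\ell .
\]

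The crucial structural point, which I expect to be the main step, is that for each fixed $k$ the sequence $(\DD_{\ell+k}X_\ell)_{\ell\ge 0}$ is a martingale difference sequence with respect to the shifted filtration $(\A_{\ell+k+1})_{\ell\ge 0}$. This follows from Lemma~\ref{martingale}(1): $\DD_{\ell+k}X_\ell$ is $\A_{\ell+k+1}$-measurable with $\E^{\ell+k}(\DD_{\ell+k}X_\ell)=0$, while the preceding partial sum $M_{\ell-1}^{(k)}$ is $\A_{\ell+k}$-measurable, so $(M_n^{(k)})_n$ is a martingale for each $k$. Moreover, since the details at distinct levels annihilate one another ($\DD_j\DD_m=0$ for $j\neq m$ and $\DD_m\DD_m=\DD_m$, an easy consequence of the tower property), the only non-vanishing details of $Z:=M_N^{(k)}$ relative to the original filtration are precisely $\DD_{\ell+k}X_\ell$ for $0\le\ell\le N$. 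Applying the von Bahr--Esseen/Rio inequality \eqref{Rio0} to this $Z$ (whose mean is zero) therefore gives $\|M_N^{(k)}\|_p\le\max(1,\sqrt{p-1})\big(\sum_{\ell=0}^N\|\DD_{\ell+k}X_\ell\|_p^{p'}\big)^{1/p'}$.

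With these two ingredients the maximal inequality follows by a triangle-inequality-and-Doob argument. From $S_{n,X}=\sum_k M_n^{(k)}$ I obtain $S_{N,X}^*\le\sum_{k\ge 0}\max_{0\le n\le N}|M_n^{(k)}|$, so by the triangle inequality in $L^p$ and Doob's maximal inequality (valid since $p>1$) applied to each martingale $(M_n^{(k)})_n$,
\[
\|S_{N,X}^*\|_p\le\sum_{k=0}^\infty\big\|\max_{0\le n\le N}|M_n^{(k)}|\big\|_p\le\frac{p}{p-1}\sum_{k=0}^\infty\|M_N^{(k)}\|_p ,
\]
and substituting the bound on $\|M_N^{(k)}\|_p$ yields exactly the asserted inequality.

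Finally, for the convergence statements I would let $N\to\infty$: monotone convergence upgrades the maximal inequality to $\|S^*_X\|_p\le\frac{p}{p-1}\max(1,\sqrt{p-1})\sum_k(\sum_n\|\DD_{n+k}X_n\|_p^{p'})^{1/p'}$, which is finite under \eqref{main-cond}; this is the last assertion $S^*_X\in L^p$. To obtain genuine convergence of $\sum_n X_n$, I would note that each $L^p$-bounded martingale $(M_n^{(k)})_n$ converges almost surely and in $L^p$ to a limit $M_\infty^{(k)}$ (Doob), while $g:=\sum_k\sup_n|M_n^{(k)}|$ belongs to $L^p$ because $\sum_k\|\sup_n|M_n^{(k)}|\|_p<\infty$ by Doob together with the finiteness just established. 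Hence for almost every $\omega$ the series $\sum_k M_n^{(k)}(\omega)$ converges uniformly in $n$, so the limits may be exchanged, $\lim_n\sum_k M_n^{(k)}=\sum_k\lim_n M_n^{(k)}$, giving $S_{n,X}\to\sum_k M_\infty^{(k)}$ almost surely; domination by $g\in L^p$ then promotes this to $L^p$ convergence via dominated convergence. The only genuinely delicate point is the diagonal martingale structure of the second paragraph; everything else is a routine combination of Doob's inequality, \eqref{Rio0}, and domination.
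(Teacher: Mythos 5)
Your proof is correct and follows the paper's own argument for the maximal inequality essentially verbatim: the same diagonal decomposition $X_\ell=\sum_{k\ge 0}\DD_{\ell+k}X_\ell$ obtained from $\E^\ell X_\ell=0$, the same observation that for fixed $k$ the partial sums $\sum_{\ell=0}^{m}\DD_{\ell+k}X_\ell$ form a martingale, and the same combination of Doob's maximal inequality with the von Bahr--Esseen--Rio bound \eqref{Rio0}. The only place you depart from the paper is the deduction of a.s.\ and $L^p$ convergence: the paper estimates $\|\sup_{p,q\ge N'}|S_p-S_q|\|_p$ by the tail of the series in \eqref{main-cond} and lets $N'\to\infty$ via dominated convergence on the counting measure followed by Fatou, whereas you apply Doob's convergence theorem to each $L^p$-bounded martingale $(M_n^{(k)})_n$ and use the a.s.\ finiteness of $\sum_k\sup_n|M_n^{(k)}|$ to interchange the limits in $n$ and $k$ --- both routes are routine and valid, yours trading the Cauchy-criterion bookkeeping for a uniform-convergence argument.
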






\noindent {\bf Proof.}  By
the decomposition in Lemma \ref{decomposition} applied to each $X_\ell$ and
using the fact $\E^\ell (X_\ell)=0$ which implies 
that, for every
$k\le \ell$, $\E^k X_\ell = \E^k(\E^\ell X_\ell)=0$, hence that $\DD_k X_\ell=0$ for $k<\ell$, we have
$$X_\ell= \sum_{k\ge \ell}\DD_k X_\ell=\sum_{k=0}^\infty \DD_{k+\ell}X_\ell\, .
$$
Let $N\in \N$ and $0\le n\le N$. We obtain that
$$
|S_n|= \left|\sum_{\ell=0}^n X_\ell\right|=
\left|\sum_{\ell=0}^n \sum_{k=0}^\infty \DD_{\ell+k} X_{\ell}\right|
\le \sum_{k=0}^\infty \max_{0\le m\le N} \left|\sum_{\ell=0}^m
\DD_{\ell+k} X_{\ell}\right|\, .
$$
Now, for every $k\in \N$ fixed,  the sequence $(\sum_{\ell=0}^m\DD_{\ell+k} X_{\ell})_{m\in \N}$
is a  martingale. Hence, by Doob's maximal inequality and the von Bahr-Esseen-Rio inequality \eqref{Rio0}
in Lemma \ref{decomposition}, we have
$$
\| S_N^*\|_p
\le \frac{p}{p-1} \max(1,\sqrt{p-1}) \sum_{k=0}^\infty\Big( \sum_{\ell=0}^N \|\DD_{\ell+k} X_{\ell}\|_p^{p'}
\Big)^{1/p'}\, .
$$

Thus we have proved the
maximal inequality.
Consequently, for any $N'\le N''$, setting $K_p:=\frac{p}{p-1} \max(1,\sqrt{p-1})$, we have
\begin{equation*}\label{Estimates}
\|\max_{N'\le p,q\le N''}|S_{p} -S_{q}|\|_p\le
2\| \max_{N'\le n\le N''} |S_n - S_{N'}| \|_p
\le 2K_p \sum_{k=0}^\infty\Big( \sum_{\ell=N'+1}^{N''} \|\DD_{\ell+k} X_{\ell}\|_p^{p'}
\Big)^{1/p'}\, .
\end{equation*}
Letting $N''\to +\infty$, we infer that
\begin{equation}\label{Estimates}
\|\sup_{ p,q\ge N'}|S_{p} -S_{q}|\|_p
\le 2K_p \sum_{k=0}^\infty\Big( \sum_{\ell\ge N'+1} \|\DD_{\ell+k} X_{\ell}\|_p^{p'}
\Big)^{1/p'}<\infty\, ,
\end{equation}
by \eqref{main-cond}. By the Lebesgue dominated convergence theorem
on $\N$, applied to the counting measure, we deduce that
\begin{equation*}
\|\sup_{ p,q\ge N'}|S_{p} -S_{q}|\|_p
\underset{N'\to +\infty}\longrightarrow 0\, .
\end{equation*}
By the Fatou lemma
we infer that almost surely
$$
\sup_{ p,q\ge N'}|S_{p} -S_{q}|\underset{N'\to +\infty} \longrightarrow
0\, ,
$$
which finishes the proof. \hfill $\square$
\medskip

\noindent{\bf Remark.} If $p\ge 2$, then $p'=2$ and the condition \eqref{main-cond}
with $p\ge 2$ is stronger than  the condition \eqref{main-cond} with $p=2$. Hence,
the relevance of the proposition when $p\ge 2$ lies in the integrability
of the maximal function $S_X^*$.  While if one is only concerned with the  a.s. convergence
it is better to apply the proposition with $p=2$. However, as we shall see,  the control of $S^*$ in $L^p$ with $p>2$
will allows us to study the divergence of the series $\sum X_n$.



\subsection{Adapted series $\sum Y_n$}

The maximal functions associated to the series $\sum Y_n$
will be denoted by $ S_{N, Y}^*$ and $S^*_Y$.

\begin{prop}\label{mart2}
Let $(Y_n)_{n\ge 1 }\subset L^p(\Omega,\A,\P)$ be such that
$Y_n$ is $\A_n$-measurable  and $\E Y_n=0$ for every $n\ge 1$
( $p>1$). Then,
for every $N\ge 1$,
\begin{equation}\label{ine-max}
\| S_{N, Y}^* \|_p
\le K_p \sum_{k=1}^{N}\Big( \sum_{\ell=k}^N \|\DD_{\ell-k} Y_{\ell}\|_p
^{p'}
\Big)^{1/p'}\,
\end{equation}
Consequently, the series $\sum_{n\in \N}Y_n $
converges $\P$-a.s. and in $L^p(\Omega,\A_0,\P)$ under the following condition
 \begin{equation}\label{main-cond2}
 \sum_{k=1}^\infty\Big( \sum_{n =0}^\infty  \|\DD_{n}
Y_{n+k}\|_p^{p'}\Big)^{1/p'}<\infty.
\end{equation}
Moreover, when \eqref{main-cond2} holds, we have
$S^*_Y\in L^p(\Omega,\A,\P)$.
\end{prop}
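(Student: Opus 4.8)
The plan is to mirror the proof of Proposition \ref{mart1}, replacing the \emph{forward} martingale decomposition used there by a \emph{backward} one adapted to the hypothesis that each $Y_\ell$ is $\A_\ell$-measurable. Because $Y_\ell$ is $\A_\ell$-measurable we have $\E^\ell Y_\ell=Y_\ell$, and because $\E Y_\ell=\E^0 Y_\ell=0$ (as $\A_0=\{\emptyset,\Omega\}$), the decomposition of Lemma \ref{decomposition} telescopes onto the low-order details only:
$$
Y_\ell=\E^\ell Y_\ell-\E^0 Y_\ell=\sum_{j=0}^{\ell-1}\DD_j Y_\ell=\sum_{k=1}^\infty \DD_{\ell-k}Y_\ell,
$$
where I adopt the convention $\DD_j=0$ for $j<0$, so that only the terms with $1\le k\le\ell$ survive. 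This is the exact reflection of the identity $X_\ell=\sum_{k=0}^\infty \DD_{\ell+k}X_\ell$ from Proposition \ref{mart1}: instead of grouping the details by the gap $k$ lying \emph{above} $\ell$, here I group them by the gap $k$ lying \emph{below} $\ell$.

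With this identity I interchange the two summations. For $n\le N$,
$$
S_n=\sum_{\ell=1}^n Y_\ell=\sum_{k=1}^\infty\sum_{\ell=1}^n \DD_{\ell-k}Y_\ell,
$$
and for each fixed $k\ge1$ the decisive observation is that $m\mapsto\sum_{\ell=1}^m \DD_{\ell-k}Y_\ell$ is a martingale with respect to the shifted filtration $(\A_{m-k+1})_m$: its increment $\DD_{m-k}Y_m$ is $\A_{m-k+1}$-measurable and satisfies $\E^{m-k}\DD_{m-k}Y_m=0$ by part (1) of Lemma \ref{martingale}. Since the inner sum vanishes unless $k\le n\le N$, this yields
$$
S_{N,Y}^*\le\sum_{k=1}^N\max_{1\le m\le N}\Big|\sum_{\ell=k}^m \DD_{\ell-k}Y_\ell\Big|.
$$
I then apply Doob's maximal inequality and the von Bahr--Esseen--Rio inequality \eqref{Rio0} to each $k$; here the orthogonality relations of Lemma \ref{martingale} show that the $j$-th detail of the martingale endpoint $\sum_{\ell=k}^N \DD_{\ell-k}Y_\ell$ is precisely $\DD_j Y_{j+k}$, so \eqref{Rio0} returns exactly $\big(\sum_{\ell=k}^N\|\DD_{\ell-k}Y_\ell\|_p^{p'}\big)^{1/p'}$. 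Summing over $k$ by the triangle inequality in $L^p$ produces the claimed maximal inequality with constant $K_p=\frac{p}{p-1}\max(1,\sqrt{p-1})$.

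For the convergence assertion I argue as in Proposition \ref{mart1}: letting $\ell$ run over $(N',m]$ rather than $[1,m]$ and repeating the estimate gives, for $N'<N''$,
$$
\Big\|\sup_{N'<m,m'\le N''}|S_m-S_{m'}|\Big\|_p\le 2K_p\sum_{k=1}^\infty\Big(\sum_{\ell>N',\,\ell\ge k}\|\DD_{\ell-k}Y_\ell\|_p^{p'}\Big)^{1/p'}.
$$
Reindexing by $n=\ell-k$, the inner sum becomes a tail $\sum_{n\ge\max(0,N'-k+1)}\|\DD_n Y_{n+k}\|_p^{p'}$ of a series whose total over $k$ is finite by hypothesis \eqref{main-cond2}. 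Each such tail tends to $0$ as $N'\to\infty$ while being dominated by the summable quantity $\big(\sum_{n\ge0}\|\DD_n Y_{n+k}\|_p^{p'}\big)^{1/p'}$, so the dominated convergence theorem on $\N$ (counting measure) forces the right-hand side to $0$. This gives the $L^p$-Cauchy property, hence $L^p$-convergence, and almost sure convergence follows by applying Fatou's lemma to the decreasing sequence $\sup_{m,m'\ge N'}|S_m-S_{m'}|$; letting $N\to\infty$ in the maximal inequality yields $S^*_Y\in L^p$.

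The one genuine point to watch — the main obstacle relative to a verbatim transcription of Proposition \ref{mart1} — is that after reindexing the lower summation limit in the tail depends on $k$, namely $\max(0,N'-k+1)$ rather than a clean $N'+1$. Consequently one cannot simply truncate a fixed series; one must instead verify that the $k$-summand is dominated, uniformly in $N'$, by a term of the convergent series in \eqref{main-cond2} so that dominated convergence applies. This domination is immediate, which is why the backward decomposition goes through with no loss.
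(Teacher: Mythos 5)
Your proposal is correct and follows essentially the same route as the paper: the backward decomposition $Y_\ell=\sum_{k=1}^{\ell}\DD_{\ell-k}Y_\ell$, the interchange of sums, Doob's maximal inequality plus the von Bahr--Esseen--Rio inequality for each fixed $k$, and the dominated-convergence/Fatou argument transposed from Proposition \ref{mart1}. The one wrinkle you flag --- that after reindexing the tail's lower limit $\max(0,N'-k+1)$ depends on $k$, so one must invoke domination by the full convergent series rather than truncate a fixed one --- is exactly the detail the paper leaves to the reader, and you handle it correctly.
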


\noindent {\bf Proof.} The proof is similar to that of Proposition \ref{mart1}.
By assumption, we infer that for every $\ell\ge 1$,
$$
Y_\ell=\sum_{k=0}^{\ell-1}\DD_k Y_\ell=\sum_{k=1}^{\ell}\DD_{\ell-k} Y_\ell\, .
$$
Hence,
$$
|\sum_{\ell=1}^n Y_\ell|
=|\sum_{\ell=1}^n \sum_{k=1}^{\ell} \DD_{\ell-k} Y_{\ell}|
\le \sum_{k=1}^{N} \max_{k\le m\le N} |\sum_{\ell=k}^m
\DD_{\ell-k} Y_{\ell}|\, .
$$
Since, for every fixed $k\in \N$, $(\sum_{\ell=k}^m
\DD_{\ell-k} Y_{\ell})_{m\ge 1}$ is a martingale, then \eqref{ine-max} follows from the Doob maximal inequality and the
Bahr-Esseen-Rio inequality \eqref{Rio0}.

\smallskip

The proof of the $\P$-a.s. and $L^p$-convergence may be done as for
Proposition \ref{mart1}. \hfill $\square$

\subsection{Convergence and integrability of general series $\sum Z_n$}

Combining Proposition \ref{mart1} and Proposition \ref{mart2} and
using the decomposition $Z_n=X_n + Y_n$, we derive the following
theorem.

\begin{theo}\label{theo-gen}
	Let $p>1$.
Let $(Z_n)_{n\in \N }\subset L^p(\Omega,\A,\P)$ be such that
 $\E Z_n=0$ for every $n\in \N$. Then,
for every $N\in \N  $,
\begin{equation}\label{ine-max-gen}
\| S_{N,Z}^* \|_p
\le K_p \Big(\sum_{k=0}^\infty\Big( \sum_{\ell=0}^N \|\DD_{\ell+k} Z_{\ell}\|_p^{p'}
\Big)^{1/p'}+ \sum_{k=1}^{N}\Big( \sum_{\ell=k}^N \|\DD_{\ell-k} Z_{\ell}
\|_p^{p'}\Big)^{1/p'}\Big)\, .
\end{equation}
Consequently, the series $\sum_{n\in \N}Z_n $
converges $\P$-a.s. and in $L^p(\Omega,\A,\P)$ under the following set of conditions
\begin{equation}\label{series1} \sum_{k=0}^\infty\Big( \sum_{n=0}^\infty \|\DD_{n+k}
Z_{n}\|_p^{p'}\Big)^{1/p'}<\infty
\end{equation} and
\begin{equation}\label{series2}
\sum_{k=1}^\infty\Big( \sum_{n =0}^\infty \|\DD_n
Z_{n+k}\|_p^{p'}\Big)^{1/p'}<\infty\,.
\end{equation}
Moreover, if both conditions \eqref{series1} and \eqref{series2} are satisfied, then $S^*_Z\in L^p(\Omega,\A,\P)$.
\end{theo}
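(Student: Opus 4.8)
The plan is to exploit the additive splitting $Z_n = X_n + Y_n$ with $X_n = Z_n - \E^n Z_n$ and $Y_n = \E^n Z_n$ already introduced above, and to reduce the statement to the two maximal inequalities of Propositions \ref{mart1} and \ref{mart2}. First I would check that this splitting is admissible for those propositions: since $\E^n X_n = \E^n Z_n - \E^n Z_n = 0$, the sequence $(X_n)$ satisfies the hypothesis $\E^n(X_n)=0$ of Proposition \ref{mart1}; and since $Y_n = \E^n Z_n$ is $\A_n$-measurable with $\E Y_n = \E Z_n = 0$, the sequence $(Y_n)$ satisfies the hypotheses of Proposition \ref{mart2} (note $Y_0 = \E Z_0 = 0$, so nothing is lost by starting the adapted sum at $n=1$).

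The step that requires care, and the only genuine computation, is the identification of the relevant details of $X_n$ and $Y_n$ with those of $Z_n$. Because $\E^\ell Z_\ell$ is $\A_\ell$-measurable, for every $k \ge 0$ the operator $\DD_{\ell+k} = \E^{\ell+k+1} - \E^{\ell+k}$ annihilates it, both conditional expectations fixing an $\A_\ell$-measurable function once the index is $\ge \ell$, whence
$$
\DD_{\ell+k} X_\ell = \DD_{\ell+k} Z_\ell \qquad (k \ge 0).
$$
Symmetrically, for $k \ge 1$ the index $\ell - k$ is strictly less than $\ell$, so the tower property gives $\E^{\ell-k+1}(\E^\ell Z_\ell) = \E^{\ell-k+1} Z_\ell$ and $\E^{\ell-k}(\E^\ell Z_\ell) = \E^{\ell-k} Z_\ell$, hence
$$
\DD_{\ell-k} Y_\ell = \DD_{\ell-k} Z_\ell \qquad (k \ge 1).
$$
With these two identities, condition \eqref{main-cond} for $(X_n)$ becomes exactly \eqref{series1} for $(Z_n)$, and condition \eqref{main-cond2} for $(Y_n)$ becomes exactly \eqref{series2}.

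It then remains to assemble the pieces. Writing $S_{n,Z} = S_{n,X} + S_{n,Y}$ and taking maxima, the pointwise triangle inequality gives $S_{N,Z}^* \le S_{N,X}^* + S_{N,Y}^*$, so the $L^p$ triangle inequality combined with the bounds of Propositions \ref{mart1} and \ref{mart2} (which share the same constant $K_p$) and the detail identities above yields precisely \eqref{ine-max-gen}. For the final assertions, under \eqref{series1} and \eqref{series2} the two propositions give the $\P$-a.s.\ and $L^p$ convergence of $\sum X_n$ and $\sum Y_n$ together with $S_X^*, S_Y^* \in L^p$; adding the two convergent series yields the convergence of $\sum Z_n$, and $S_Z^* \le S_X^* + S_Y^*$ yields $S_Z^* \in L^p$. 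I do not expect any serious obstacle: the whole analytic difficulty of the theorem is already packaged into Propositions \ref{mart1} and \ref{mart2}, and the present statement is a clean superposition of the two, the only point genuinely needing verification being the annihilation and tower identities for the details $\DD_{\ell\pm k}$.
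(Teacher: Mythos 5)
Your proposal is correct and follows exactly the route of the paper's own proof: the decomposition $Z_n = X_n + Y_n$, the identities $\DD_{\ell+k}X_\ell = \DD_{\ell+k}Z_\ell$ and $\DD_{\ell-k}Y_\ell = \DD_{\ell-k}Z_\ell$, and the superposition of Propositions \ref{mart1} and \ref{mart2} via $S_{N,Z}^* \le S_{N,X}^* + S_{N,Y}^*$. Your remark that $Y_0 = \E Z_0 = 0$ (so the adapted series may start at $n=1$) is a small point the paper leaves implicit, but otherwise the two arguments coincide.
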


\noindent {\bf Remark.} If $(Z_n)$ is adapted to $(\A_n)$, then the condition (\ref{series1}) is trivially satisfied.

\smallskip
\noindent {\bf Proof.} Recall that $X_n = Z_n - \E^n Z_n$ and $Y_n =\E^n Z_n$. The theorem follows immediately from
the decomposition $Z_n = X_n + Y_n$,
Proposition \ref{mart1} and Proposition \ref{mart2} and the following simple facts
$$
    S_N^* \le  S_{N, X}^* +  S_{N,Y}^*,
$$
$$
    \DD_{\ell +k} X_\ell = \E^{\ell + k + 1} (Z_\ell - \E^\ell Z_\ell)
     - \E^{\ell + k} (Z_\ell - \E^\ell Z_\ell) =  \DD_{\ell +k} Z_\ell,
$$
$$
    \DD_{\ell - k} Y_\ell = \E^{\ell - k + 1} (\E^\ell Z_\ell)
     - \E^{\ell - k} (\E^\ell Z_\ell) =  \DD_{\ell - k} Z_\ell.
$$
$\Box$
\medskip

We call (\ref{series1}) the condition on higher order details and (\ref{series2}) the condition on lower order details.

\subsection{Practical criteria}

In order to apply Theorem \ref{theo-gen}, it is sometimes more convenient to use the sufficient conditions in the next lemma.
The proof of the lemma will use the Burkholder inequality.

The condition (\ref{series1b}) suggests  that we need some order of approximation
of $Z_k$ by $\mathbb{E}(Z_k |\mathcal{A}_n)$ as $n$ tends to infinity and the condition (\ref{series2b}) suggests that  the conditional expectation $\mathbb{E}(\cdot |\mathcal{A}_n))$ would contract in some order on the space $L^p_0(\Omega, \mathcal{A}, P)$ consisting of
$p$-integrable variables with zero mean; sometimes it is really the case
(see Lemma \ref{contraction}, see also Theorem 2 in \cite{F1999} and Lemma 2 in \cite{FP}).

\begin{lem}\label{lemme}
Let $(Z_n)_{n\in \N }\subset L^p(\Omega,\A,\P)$, $p>1$, with $\E Z_n =0$ for all $n$. The condition   \eqref{series1}
on the higher order details
is satisfied  if
\begin{equation}\label{series1b}
\sum_{\ell=0}^\infty 2^{\ell(1-1/p)}\Big( \sum_{k=0}^\infty \|Z_k-\E^{2^\ell+k-1}Z_k\|_p^{p'}\Big)^{1/p'}<\infty\,.
\end{equation}
The condition \eqref{series2} on the lower order details is satisfied
if
\begin{equation}\label{series2b}
\sum_{\ell=0}^\infty 2^{\ell(1-1/p)}\Big( \sum_{k=2^\ell}^\infty
\|\E^{k+1-2^\ell}Z_k\|_p^{p'}\Big)^{1/p'}<\infty\, .
\end{equation}
\end{lem}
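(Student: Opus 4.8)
The plan is to reduce each of \eqref{series1} and \eqref{series2} to \eqref{series1b}, respectively \eqref{series2b}, by grouping the outer summation index into dyadic blocks and applying H\"older's inequality, the block sizes being chosen so that H\"older produces \emph{exactly} the weights $2^{\ell(1-1/p)}$. For \eqref{series1}, set $a_k=\big(\sum_{n}\|\DD_{n+k}Z_n\|_p^{p'}\big)^{1/p'}$ and partition $\{k\ge 0\}$ into the blocks $B_\ell=\{k:2^\ell-1\le k\le 2^{\ell+1}-2\}$, which have cardinality $|B_\ell|=2^\ell$. H\"older's inequality with exponent $p$ gives $\sum_{k\in B_\ell}a_k\le |B_\ell|^{1-1/p}\big(\sum_{k\in B_\ell}a_k^{p}\big)^{1/p}=2^{\ell(1-1/p)}\big(\sum_{k\in B_\ell}a_k^{p}\big)^{1/p}$, so that, summing over $\ell$, it remains to bound each factor $\big(\sum_{k\in B_\ell}a_k^{p}\big)^{1/p}$ by a constant times $\big(\sum_n\|Z_n-\E^{2^\ell+n-1}Z_n\|_p^{p'}\big)^{1/p'}$.

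The quantity $\big(\sum_{k\in B_\ell}a_k^p\big)^{1/p}$ is the mixed norm $\ell^p_k(\ell^{p'}_n)$ of the array $\|\DD_{n+k}Z_n\|_p$. Since $p\ge p'=\min(2,p)$, Minkowski's inequality lets me interchange the two summations at the cost of reversing the order, namely $\big(\sum_{k\in B_\ell}a_k^p\big)^{1/p}\le \big(\sum_n\big(\sum_{k\in B_\ell}\|\DD_{n+k}Z_n\|_p^{p}\big)^{p'/p}\big)^{1/p'}$ (for $p=2$ this is just Fubini). Because $k\in B_\ell$ forces $n+k\ge n+2^\ell-1$, the innermost sum is dominated by the tail $\sum_{j\ge n+2^\ell-1}\|\DD_j Z_n\|_p^{p}$. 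The whole reduction therefore hinges on the single martingale estimate, for each fixed $n$ and each level $m$,
$$\Big(\sum_{j\ge m}\|\DD_j Z_n\|_p^{p}\Big)^{1/p}\le C_p\,\big\|Z_n-\E^{m}Z_n\big\|_p,$$
which is exactly where Burkholder's inequality enters and which I expect to be the main obstacle.

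To prove this estimate, write $W=Z_n-\E^m Z_n$; then $\E W=0$, and since $\E^jW=0$ for $j\le m$ one has $\DD_jW=0$ for $j<m$ and $\DD_jW=\DD_jZ_n$ for $j\ge m$, so that $\big(\sum_{j}\|\DD_j W\|_p^{p}\big)^{1/p}=\big\|(\sum_j|\DD_jW|^p)^{1/p}\big\|_p$. As $p\ge 2$, the pointwise inequality $(\sum_j|\DD_jW|^p)^{1/p}\le(\sum_j|\DD_jW|^2)^{1/2}$ combined with the Burkholder inequality \eqref{Burkholder0} gives $\big\|(\sum_j|\DD_jW|^p)^{1/p}\big\|_p\le\big\|(\sum_j|\DD_jW|^2)^{1/2}\big\|_p\le C_p\|W\|_p$; for $p=2$ this is the exact identity of Lemma \ref{martingale}(3) with $C_2=1$. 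Inserting this with $m=2^\ell+n-1$ yields $\sum_{k\in B_\ell}a_k\le C_p\,2^{\ell(1-1/p)}\big(\sum_n\|Z_n-\E^{2^\ell+n-1}Z_n\|_p^{p'}\big)^{1/p'}$, and summing over $\ell$ recognizes the right-hand side as \eqref{series1b}. I expect the delicate point to be the range $1<p<2$, where $p'=p$ and the displayed tail estimate can genuinely fail, so that the argument as written cleanly controls the case $p\ge2$ of primary interest, in particular $p=2$.

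For the second implication the argument is entirely parallel, now exploiting the \emph{low-order} details. Reindexing the inner sum in \eqref{series2} by the variable index of $Z$, one sees that the detail $\DD_{n}Z_{n+k}$ sits at a level strictly below that variable index, and that $\E^{m}Z_k=\sum_{j<m}\DD_jZ_k$ collects precisely the low-order details (using $\E^0Z_k=0$). Partitioning the outer index into dyadic blocks and running the same H\"older--Minkowski scheme reduces matters to the companion estimate $\big(\sum_{j<m}\|\DD_jZ_k\|_p^{p}\big)^{1/p}\le C_p\|\E^{m}Z_k\|_p$, which follows from \eqref{Burkholder0} applied to the finite martingale $\E^{m}Z_k=\sum_{j<m}\DD_jZ_k$ exactly as above. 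Taking $m=k+1-2^\ell$ produces the weight $2^{\ell(1-1/p)}$ and the summand $\|\E^{k+1-2^\ell}Z_k\|_p$, yielding \eqref{series2b} and completing the plan.
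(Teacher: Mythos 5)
Your argument is correct for $p\ge 2$ and is essentially the paper's proof: dyadic blocks of length $2^\ell$, H\"older to produce the weight $2^{\ell(1-1/p)}$ (the paper does this in two steps, Cauchy--Schwarz giving $2^{\ell/2}$ followed by H\"older with exponent $p/2$ inside each block giving $2^{\ell(1/2-1/p)}$, whereas you do it in one step and compensate with Minkowski's inequality for the mixed norm $\ell^p_k(\ell^{p'}_n)$), and the key Burkholder bound $\sum_{j\ge m}\|\DD_j Z_n\|_p^p\le C_p^p\|Z_n-\E^m Z_n\|_p^p$ obtained exactly as you do, via $\|\cdot\|_{\ell^p}\le\|\cdot\|_{\ell^2}$ and \eqref{Burkholder0}, with the companion estimate for $\E^{k+1-2^\ell}Z_k$ in the second half. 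Note that the paper likewise writes out only the case $p\ge2$ and declares $1<p<2$ ``similar,'' so your explicit caveat about that range matches the coverage of the published argument.
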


\noindent{\bf Proof.} We make the proof when $p\ge 2$, then $p'=2$. The proof when
$1<p<2$ may be done similarly. Look first at the condition (\ref{series1}) on the higher order of details.
Cutting the sum over $k$ into dyadic blocks and applying Cauchy-Schwarz inequality to each block  we get
$$
\sum_{k=0}^\infty\Big( \sum_{n=0}^\infty \|\DD_{n+k}
Z_{n}\|_p^2\Big)^{1/2}\le \sum_{\ell=0}^\infty
2^{\ell/2} \Big(\sum_{k=2^{\ell}-1}^{2^{\ell+1}-2} \sum_{n=0}^\infty\|\DD_{n+k}
Z_{n}\|_p^2\Big)^{1/2}\, .
$$
Now, using successively  the H\"older inequality (notice that $p/2\ge 1$), the trivial inequality
$\|\cdot \|_{\ell^p}\le \|\cdot \|_{\ell^2}$ and the reverse Burkholder inequality \eqref{Burkholder0}, we get that
\begin{eqnarray*}
\sum_{k=2^{\ell}-1}^{2^{\ell+1}-2}\|\DD_{n+k}Z_{n}\|_p^2
& \le &
2^{\ell (1-2/p)}\Big(\sum_{k=2^{\ell}-1}^{2^{\ell+1}-2}\|\DD_{n+k}Z_{n}\|_p^p\Big)^{2/p}\\
&= & 2^{\ell (1-2/p)}\Big(\E\big(\sum_{k=2^{\ell}-1}^{2^{\ell+1}-2}(\DD_{n+k}Z_{n})^p
\big)\, \Big)^{2/p}\\
&\le& 2^{\ell (1-2/p)}\Big\|\big(\sum_{k\ge 2^{\ell}-1}
(\DD_{n+k}Z_{n})^2\big)^{1/2}\, \Big\|_p^{2}\\
&\le&
C_p2^{\ell (1-2/p)}\|Z_n -\E^{n+ 2^{\ell}-1}Z_n\|_p^2
\, .
\end{eqnarray*}
Thus the first assertion follows.
Similarly, we have
\begin{eqnarray*}
\sum_{k=1}^\infty\Big( \sum_{n =0}^\infty \|\DD_n
Z_{n+k}\|_p^2\Big)^{1/2}
&=&\sum_{k=1}^\infty\Big( \sum_{n =k}^\infty \DD_{n-k}
Z_{n}\|_p^2\Big)^{1/2}\\
&\le & \sum_{\ell=0}^\infty
2^{\ell/2} \Big( \sum_{k=2^{\ell}}^{2^{\ell+1}-1} \sum_{n =k}^\infty
\DD_{n-k}
Z_{n}\|_p^2\Big)^{1/2}\, .
\end{eqnarray*}
Now, we first change the order of summation to get
\begin{gather*}
\sum_{k=2^{\ell}}^{2^{\ell+1}-1} \sum_{n =k}^\infty
\|\DD_{n-k}Z_{n}\|_p^2  = \sum_{n=2^\ell}^{\infty}
\sum_{k=2^\ell}^{\min({n,2^{\ell+1}-1})} \|\DD_{n-k}Z_{n}\|_p^2 \,.
\end{gather*}
Then, using the same arguments as above, we have
\begin{eqnarray*}
\sum_{k=2^\ell}^{\min({n,2^{\ell+1}-1})} \|\DD_{n-k}Z_{n}\|_p^2
&\le&
C_p2^{\ell (1-2/p)} \|\E^{n+1-2^{\ell}}Z_n -\E^0Z_n\|_p^2\\
&=&
C_p2^{\ell (1-2/p)} \|\E^{n+1-2^{\ell}}Z_n \|_p^2.
\end{eqnarray*}
Finally we get
$$
\sum_{k=1}^\infty\Big( \sum_{n =0}^\infty \DD_n
Z_{n+k}\|_p^2\Big)^{1/2}
\le  C_p\sum_{\ell=0}^\infty 2^{\ell (1-1/p)}\Big( \sum_{k=2^\ell}^\infty
\|\E^{k+1-2^\ell}Z_k\|_p^2\Big)^{1/2}.
$$
$\Box$

\medskip

In view of the above results and of Lemma 5 of \cite{FanETDS}, one expects
to have better integrability properties for $S_Z^*$ when
$(Z_n)\subset L^\infty(\Omega, \A,\P)$.

\begin{theo}\label{theo-bounded}
Let $(Z_n)_{n\in \N }\subset L^\infty(\Omega,\A,\P)$  with $\E Z_n =0$ for all $n$. Assume that
\begin{equation}\label{series1-infty}
\Delta_1:=\sum_{\ell=0}^\infty \Big( \sum_{k=0}^\infty \|Z_k-\E^{\ell+k}Z_k\|_\infty^{2}\Big)^{1/2}<\infty\,.
\end{equation}
and
\begin{equation}\label{series2-infty}
\Delta_2:=\sum_{\ell=0}^\infty \Big( \sum_{k=\ell}^\infty
\|\E^{k+1-\ell}Z_k\|_\infty^{2}\Big)^{1/2}<\infty\, .
\end{equation}
Then $\sum_{n\in \N} Z_n$ converges $\P$-a.e. and in any
$L^p$ with $p\ge 1$. Moreover, we have
$$
\E({\rm e}^{\beta (S_Z^*)^2})<\infty,
$$
for every $\beta < \frac{1}{4e (\Delta_1 + \Delta_2)^2}$.
\end{theo}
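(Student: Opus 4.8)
The plan is to prove the $L^p$-convergence first (for every $p\ge 1$) and then derive the Gaussian integrability of $S_Z^*$ via a moment-growth estimate in $p$ as $p\to\infty$. First I would observe that conditions \eqref{series1-infty} and \eqref{series2-infty} are stated with the $L^\infty$-norm, which dominates every $L^p$-norm; hence for any fixed $p>1$ one has (using $\|\cdot\|_p\le\|\cdot\|_\infty$ termwise and $p'=\min(2,p)\le 2$) that the quantities controlling \eqref{series1b} and \eqref{series2b}, and ultimately \eqref{series1} and \eqref{series2}, are finite. So Theorem \ref{theo-gen} immediately yields $\P$-a.e. and $L^p$-convergence of $\sum_n Z_n$ for every $p>1$, and the case $p=1$ follows by $L^2$-convergence together with $\|\cdot\|_1\le\|\cdot\|_2$. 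The substance of the theorem is therefore the exponential-square integrability.

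For that, the key is to extract from Theorem \ref{theo-gen} an explicit bound on $\|S_Z^*\|_p$ in terms of $\Delta_1$, $\Delta_2$ and $p$. The plan is to rerun the estimates of Lemma \ref{lemme} but retaining all $p$-dependence, and using the crude uniform bound $\|\mathcal{D}_m W\|_p\le\|\mathcal{D}_m W\|_\infty\le 2\|W\|_\infty$ together with the $L^\infty$ telescoping identities from Lemma \ref{martingale}(3). Concretely, for $p\ge 2$ (so $p'=2$) the dyadic-block decomposition followed by Cauchy--Schwarz and the reverse Burkholder inequality \eqref{Burkholder0}, whose constant is $C_p\le c\sqrt p$ for some absolute $c$, turns the two sums in \eqref{ine-max-gen} into constant multiples of $\Delta_1$ and $\Delta_2$. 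Combined with $K_p=\frac{p}{p-1}\max(1,\sqrt{p-1})\le C\sqrt p$ for large $p$, this should give a bound of the shape
\begin{equation*}
\|S_Z^*\|_p\le C\,p\,(\Delta_1+\Delta_2)
\end{equation*}
for all $p$ large, where the extra factor $p$ (rather than $\sqrt p$) comes from the product of the Burkholder constant $C_p\sim\sqrt p$ and the Doob/von Bahr--Esseen factor $K_p\sim\sqrt p$. The main obstacle is precisely tracking these constants so that the growth is exactly $O(p)$: one must know that the reverse Burkholder constant in \eqref{Burkholder0} grows like $\sqrt p$, and that passing from the $L^\infty$-conditions to the dyadic estimates does not introduce further $p$-factors; this is the quantitative heart of the argument and is where Lemma 5 of \cite{FanETDS}, cited just before the statement, presumably supplies the needed constant control.

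Finally I would convert the moment bound into exponential integrability by the standard series expansion. Writing $M:=S_Z^*$ and using $\|M\|_p\le C p(\Delta_1+\Delta_2)=:A p$ for all even integers $p=2m$, one has $\E M^{2m}\le (2Am)^{2m}$, so for $\beta>0$
\begin{equation*}
\E\,\mathrm{e}^{\beta M^2}=\sum_{m=0}^\infty\frac{\beta^m}{m!}\,\E M^{2m}
\le\sum_{m=0}^\infty\frac{\beta^m(2Am)^{2m}}{m!}.
\end{equation*}
By Stirling $m!\ge (m/\mathrm{e})^m$, the general term is at most $\big(\beta(2A)^2 m\,\mathrm{e}\big)^m=\big(4\mathrm{e}A^2\beta m\big)^m$, wait—one must be careful that $m^{2m}/m!\sim (\mathrm e\,m)^m$, so the term is $\le(4\mathrm e A^2\beta\,m)^m/1$; thus the series converges by the root test exactly when $4\mathrm e A^2\beta<1$, i.e. $\beta<\tfrac{1}{4\mathrm e A^2}$. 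Choosing the constant in $A=\Delta_1+\Delta_2$ so that the bound reads $\|M\|_{2m}\le 2\sqrt m\,(\Delta_1+\Delta_2)$ (the natural form of the sub-Gaussian moment estimate) gives the stated threshold $\beta<\frac{1}{4\mathrm e(\Delta_1+\Delta_2)^2}$. The delicate point in this last step is matching the numerical constant in the moment bound to the claimed constant $\tfrac14$ in front, which forces the growth to be $\|M\|_{2m}\le\sqrt{2m}\cdot\sqrt{2}\,(\Delta_1+\Delta_2)$ rather than merely $O(p(\Delta_1+\Delta_2))$; reconciling this with the two $\sqrt p$-factors above is the second place where the constant bookkeeping must be done with care.
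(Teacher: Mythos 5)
There is a genuine gap, and it sits exactly where you flag your own unease: the constant bookkeeping in the moment bound. Your route goes through Lemma \ref{lemme}, whose proof uses the square-function (Burkholder) inequality \eqref{Burkholder0} with constant $C_p\sim\sqrt p$; multiplied by the Doob/von Bahr--Esseen factor $K_p\sim\sqrt p$ this gives, as you say, $\|S_Z^*\|_p\le C\,p\,(\Delta_1+\Delta_2)$. That bound is too weak for the conclusion. With $\|S_Z^*\|_{2m}\le 2Am$ the general term of $\sum_m\beta^m\E (S_Z^*)^{2m}/m!$ is of size $(4\mathrm{e}A^2\beta m)^m$, whose $m$-th root is $4\mathrm{e}A^2\beta m\to\infty$; the series diverges for \emph{every} $\beta>0$, so the assertion that the root test gives convergence when $4\mathrm{e}A^2\beta<1$ is false. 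A linear-in-$p$ moment growth only yields $\E\,\mathrm{e}^{\beta S_Z^*}<\infty$ (exponential, not Gaussian, integrability). You correctly identify that the sub-Gaussian threshold forces $\|S_Z^*\|_{2m}\le C\sqrt m\,(\Delta_1+\Delta_2)$, but you do not produce such a bound, and it cannot be reconciled with the two $\sqrt p$-factors your route accumulates.

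The fix --- and the paper's actual argument --- is to bypass Lemma \ref{lemme} and Burkholder entirely (the paper says so explicitly just before the proof). The hypotheses \eqref{series1-infty}--\eqref{series2-infty} are already in ``non-condensed'' form, summed over all $\ell$ with no dyadic weights, and in $L^\infty$. So one feeds them directly into the maximal inequality \eqref{ine-max-gen} of Theorem \ref{theo-gen}: by the triangle inequality
$\|\DD_{\ell+k}Z_\ell\|_p\le\|Z_\ell-\E^{\ell+k+1}Z_\ell\|_\infty+\|Z_\ell-\E^{\ell+k}Z_\ell\|_\infty$
and $\|\DD_nZ_{n+k}\|_p\le 2\|\E^{n+1}Z_{n+k}\|_\infty$, and then Minkowski's inequality in $\ell^2$ bounds the two sums in \eqref{series1} and \eqref{series2} by $2\Delta_1$ and $2\Delta_2$ respectively, uniformly in $p$. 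This gives $\|S_Z^*\|_p\le 2K_p(\Delta_1+\Delta_2)$ with only the single factor $K_p\sim\sqrt p$, i.e. precisely the sub-Gaussian moment growth $\|S_Z^*\|_{2p}\le 2\sqrt{p}\,(1+o(1))(\Delta_1+\Delta_2)$, after which your concluding series expansion (now with general term comparable to $(4\mathrm{e}\Delta^2\beta)^p$) does give the threshold $\beta<\frac{1}{4\mathrm{e}(\Delta_1+\Delta_2)^2}$. Your treatment of the qualitative a.e.\ and $L^p$ convergence is fine.
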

\noindent {\bf Proof.} We follow a standard strategy to prove the exponential integrability $S^*_Z$ by estimating
the $p$-th moments of $S^*_Z$.   To estimate the $p$-th moments we shall not use
Lemma \ref{lemme} which yields  badly behaving constants $C_p$, as $p\to \infty$, due to the use of Burkholder's inequality. Hence, we shall use Theorem \ref{theo-gen} instead.

\medskip
First remark that $\|S_Z^*\|_p$ ($p\ge 2$) is bounded by $K_p$ times the sum of the two terms in (\ref{series1}) and (\ref{series2}).
Notice that $$
\|\DD_{\ell+k} Z_{\ell}\|_p \le \|\E^{\ell+k+1} Z_{\ell} -Z_\ell\|_p
 + \|\E^{\ell+k} Z_{\ell} -Z_\ell\|_p.
 $$
Then, using Minkowski's inequality in $\ell^{2}$ we obtain that for every $N\in \N$,
$$
\sum_{k=0}^\infty
\Big( \sum_{\ell=0}^\infty \|\DD_{\ell+k} Z_{\ell}\|_p^{2}
\Big)^{1/2}\le 2 \sum_{k=0}^\infty \Big( \sum_{\ell=0}^\infty \| Z_{\ell}-\E^{\ell+k}Z_\ell\|_\infty^{2}
\Big)^{1/2}=2 \Delta_1\, .
$$
On the other hand, using
 $$\| \mathcal{D}_n Z_{n+k}
\|_p \le 2 \|\mathbb{E}^{n+1} Z_{n+k}\|_p,
$$
 we have
$$
\sum_{k=1}^\infty
\Big( \sum_{n =0}^\infty \|\DD_n
Z_{n+k}\|_p^2\Big)^{1/2}
\le 2 \sum_{k=1}^\infty \Big( \sum_{n =0}^\infty \|\E^{n+1}
Z_{n+k}\|_\infty^{2}\Big)^{1/2} = 2 \Delta_2\, .
$$
Thus we have proved $\| S_{Z}^* \|_p \le 2 K_p \Delta$ with $\Delta=\Delta_1 + \Delta_2$ for $p\ge 2$.
Let $\beta>0$. We have
$$
\E({\rm e}^{\beta (S_Z^*)^2})= \sum_{p=0}^\infty
\frac{\beta^p \|S_{Z}^* \|_{2p}^{2p}}{p!}\le 1 +\sum_{p=1}^\infty
\frac{\beta^p(2K_{2p}\Delta)^{2p}}{p!}\, .
$$
Since $K_{2p}^{2p}\le \big(\frac{2p}{2p-1}\big)^{2p} p^p\sim {\rm e} p^p$ and, by
Stirling's formula, $p!\sim \big(\frac{p}{\rm e}\big)^p \sqrt{2\pi p}$,
we infer that $\E({\rm e}^{\beta (S_Z^*)^2})<\infty$ as soon as
$\beta < (4{\rm e}\Delta^2)^{-1}$. \hfill $\square$

\subsection{Series of the form $\sum_{n=0}^\infty a_nW_n$}

In our applications, we shall be concerned with the situation where
$$Z_n=a_nW_n
$$ with $(a_n)_{n\in \N}$ a deterministic sequence and
$(W_n)_{n\in \N}$ a stationary sequence or at least a sequence
behaving somehow closely to a stationary one. When $p\ge 2$
(resp. when $1<p<2$), we are going to
control the $L^p$-moment of $\sum a_n W_n$ by the $\ell^2$-moment
(resp. the $\ell^p$-moment)
of $(a_n)$. Before stating the result, let us state
Cauchy's condensation principle whose proof is easy.
\smallskip

\begin{lem}
	Let $(u_n)_{n\in \N}$ be positive numbers such that
	$u_{n+m}\le Ku_n$ for every $n,m\in \N$ and for some $K>0$. Then the series
	$\sum_{\ell\in \N}  u_\ell$ converges if and only if the series   $\sum_{\ell\in \N}
	2^{\ell} u_{2^\ell}$ converges.
\end{lem}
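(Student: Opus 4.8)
The plan is to adapt the classical proof of Cauchy's condensation test, replacing the usual monotonicity of $(u_n)$ by the two-sided control furnished by the hypothesis $u_{n+m}\le Ku_n$. The key observation is that this hypothesis governs the whole tail past any fixed position: for every $\ell\ge n$ one has $u_\ell\le Ku_n$, and symmetrically for every $\ell\le n$ one has $u_n\le Ku_\ell$. I would organize both series into dyadic blocks and sandwich each block between constant multiples of a single condensed term.

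First I would write $\sum_{\ell\ge 1}u_\ell=\sum_{k\ge 0}B_k$, where $B_k=\sum_{\ell=2^k}^{2^{k+1}-1}u_\ell$ is a block of $2^k$ consecutive terms. For the upper estimate, since each index $\ell$ in the block satisfies $\ell\ge 2^k$, the hypothesis gives $u_\ell\le Ku_{2^k}$, hence $B_k\le 2^k Ku_{2^k}$. Summing over $k$ yields $\sum_{\ell\ge 1}u_\ell\le K\sum_{k\ge 0}2^k u_{2^k}$, which already shows that convergence of the condensed series $\sum_\ell 2^\ell u_{2^\ell}$ forces convergence of $\sum_\ell u_\ell$, the term $u_0$ being added trivially.

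For the reverse implication I would produce a lower bound for each block. Every index $\ell\le 2^{k+1}-1$ satisfies $u_{2^{k+1}-1}\le Ku_\ell$, so $u_\ell\ge K^{-1}u_{2^{k+1}-1}$ and $B_k\ge 2^k K^{-1}u_{2^{k+1}-1}$. To recover the condensed index $2^{k+1}$ I would chain the hypothesis once more: from $u_{2^{k+1}}\le Ku_{2^{k+1}-1}$ I get $u_{2^{k+1}-1}\ge K^{-1}u_{2^{k+1}}$, whence $B_k\ge (2K^2)^{-1}\,2^{k+1}u_{2^{k+1}}$. Summing over $k\ge 0$ gives $\sum_{\ell\ge 1}u_\ell\ge (2K^2)^{-1}\sum_{j\ge 1}2^j u_{2^j}$, so convergence of $\sum_\ell u_\ell$ forces convergence of the condensed series, again adding the single term $j=0$ trivially.

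The step needing the most care is this lower bound, where the naive estimate only controls $2^k u_{2^{k+1}-1}$ and one must pass from the index $2^{k+1}-1$ to the dyadic index $2^{k+1}$; the extra application of the hypothesis costs a factor $K$ but is harmless, since only the convergence and not the value of the series is at stake. No obstacle beyond this bookkeeping arises, as the constants $K$ and $K^2$ are independent of $k$ and the remaining manipulations are the standard geometric reindexing $\sum_k 2^{k+1}u_{2^{k+1}}=\sum_{j\ge 1}2^j u_{2^j}$.
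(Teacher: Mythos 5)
Your proof is correct, and since the paper explicitly omits the argument ("whose proof is easy"), your dyadic-block adaptation of the classical condensation test—using $u_{n+m}\le Ku_n$ in place of monotonicity to sandwich each block $\sum_{\ell=2^k}^{2^{k+1}-1}u_\ell$ between constant multiples of $2^ku_{2^k}$ and $2^{k+1}u_{2^{k+1}}$—is exactly the intended argument. The extra factor of $K$ needed to pass from the index $2^{k+1}-1$ to $2^{k+1}$ is handled correctly and is indeed harmless.
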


\begin{theo}\label{theo-Lp-weight}
Let $1<p\le \infty$. Let $(W_n)_{n\in \N}\subset L^p(\Omega,\A,\P)$
be such that
\begin{equation}\label{Lp-cond}
\widetilde \Delta_p:= \sum_{\ell=0}^\infty
\frac1{(\ell+1)^{1/p}} \Big(\sup_{k\in \N}
\|W_k-\E^{k+\ell-1}W_k\|_p+ \sup_{m\in \N}
\|\E^{m+1}W_{m+\ell}\|_p\Big)<\infty\, .
\end{equation}
Then the series $\sum_{n\in \N}
a_n W_n $ converges $\P$-a.e. for every $(a_n)_{n\in \N}\in \ell^{p'}$.
Moreover, when $1<p<\infty$,  there exists $C_p>0$
(independent from $(a_n)_{n\in \N}$ and $(W_n)_{n\in \N}$) such that \begin{equation}
\|\, \sup_{N\in \N}
|\sum_{n=0}^Na_nW_n|\, \|_p \le C_p \widetilde \Delta_p \|(a_n)_{n\in \N}\|_{\ell^{p'}}
\, ;
\end{equation}
 and when $p=\infty$,
we have $$
\E {\rm exp} \big(\beta \sup_{N\in \N}
|\sum_{n=0}^Na_nW_n|^2\big)<\infty
$$
for every $\beta<(4{\rm e}\widetilde \Delta_\infty \|(a_n)_{n\in \N}\|_{\ell^2})^{-1}$.
\end{theo}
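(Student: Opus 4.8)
The plan is to set $Z_n=a_nW_n$ and feed this weighted series into the machinery of Section~\ref{SectIncreasing}, converting the single scalar hypothesis $\widetilde\Delta_p<\infty$ into the two detail conditions of Theorem~\ref{theo-gen}. Throughout I take $\E W_n=0$ (so that $\E Z_n=0$), as holds in all our applications. For $1<p<\infty$ I will verify the practical criteria \eqref{series1b} and \eqref{series2b} of Lemma~\ref{lemme}; for $p=\infty$ (where $p'=2$) I will instead check \eqref{series1-infty} and \eqref{series2-infty} and invoke Theorem~\ref{theo-bounded}. Write $v_j:=\sup_k\|W_k-\E^{k+j-1}W_k\|_p$ and $w_j:=\sup_m\|\E^{m+1}W_{m+j}\|_p$, so that $\widetilde\Delta_p=\sum_{j\ge0}(j+1)^{-1/p}(v_j+w_j)$.

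First I would factor out the $\ell^{p'}$ norm of the coefficients. Since $\DD_{n+k}Z_n=a_n\DD_{n+k}W_n$ and $\DD_nZ_{n+k}=a_{n+k}\DD_nW_{n+k}$, the elementary bound $\big(\sum_k|a_k|^{p'}b_k^{p'}\big)^{1/p'}\le\|(a_k)\|_{\ell^{p'}}\sup_kb_k$ pulls $\|(a_n)\|_{\ell^{p'}}$ out of every inner $\ell^{p'}$-sum. Matching indices, the inner suprema in \eqref{series1b} and \eqref{series2b} are exactly $v_{2^\ell}$ and $w_{2^\ell}$ (for the latter, substitute $m=k-2^\ell$). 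Thus \eqref{series1b} and \eqref{series2b} reduce to showing
$$\sum_{\ell\ge0}2^{\ell(1-1/p)}v_{2^\ell}<\infty\qquad\text{and}\qquad\sum_{\ell\ge0}2^{\ell(1-1/p)}w_{2^\ell}<\infty,$$
each bounded by a constant times $\widetilde\Delta_p$.

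The heart of the matter, which I expect to be the main obstacle, is to pass from these dyadically sampled sums back to the full sum defining $\widetilde\Delta_p$, using Cauchy's condensation principle stated above. Setting $u_j:=(j+1)^{-1/p}v_j$, the term $2^\ell u_{2^\ell}=2^\ell(2^\ell+1)^{-1/p}v_{2^\ell}$ is comparable to $2^{\ell(1-1/p)}v_{2^\ell}$, so by condensation it suffices that $\sum_ju_j=\sum_j(j+1)^{-1/p}v_j<\infty$, which is part of $\widetilde\Delta_p<\infty$; the statement for $w$ is identical. The nontrivial point is the quasi-monotonicity hypothesis $u_{j+m}\le Ku_j$ of the condensation lemma. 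For $v$ this rests on the identity $W_k-\E^{k+j+m-1}W_k=(\mathrm{Id}-\E^{k+j+m-1})(W_k-\E^{k+j-1}W_k)$, valid because the conditional expectations are nested, together with $\|\mathrm{Id}-\E^{k+j+m-1}\|_{L^p\to L^p}\le2$; this yields $v_{j+m}\le2v_j$, hence $u_{j+m}\le2u_j$. For $w$ one even gets genuine monotonicity $w_{j+m}\le w_j$: writing the index of $W_{m'+j+m}$ as $(m'+m)+j$ and using $\E^{m'+1}=\E^{m'+1}\E^{(m'+m)+1}$ followed by the $L^p$-contractivity of $\E^{m'+1}$ gives $\|\E^{m'+1}W_{m'+j+m}\|_p\le\|\E^{(m'+m)+1}W_{(m'+m)+j}\|_p\le w_j$. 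With these two bounds the condensation lemma applies and delivers both displayed estimates; Lemma~\ref{lemme} then gives \eqref{series1}--\eqref{series2}, and Theorem~\ref{theo-gen} yields the a.e. and $L^p$ convergence together with the maximal inequality $\|\sup_N|\sum_{n\le N}a_nW_n|\|_p\le C_p\widetilde\Delta_p\|(a_n)\|_{\ell^{p'}}$.

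For $p=\infty$ no condensation is needed, since \eqref{series1-infty} and \eqref{series2-infty} are already indexed to match $\widetilde\Delta_\infty=\sum_\ell(v_\ell+w_\ell)$ (in the $\|\cdot\|_\infty$ sense). Factoring out $\|(a_n)\|_{\ell^2}$ as before gives $\Delta_1\le\|(a_n)\|_{\ell^2}\sum_{\ell\ge1}v_\ell$ and $\Delta_2\le\|(a_n)\|_{\ell^2}\sum_{\ell\ge0}w_\ell$, each at most $\|(a_n)\|_{\ell^2}\widetilde\Delta_\infty$. Theorem~\ref{theo-bounded} then gives the a.e.\ convergence and the exponential integrability of the maximal function, the admissible $\beta$ being controlled through $\Delta_1+\Delta_2\le2\|(a_n)\|_{\ell^2}\widetilde\Delta_\infty$.
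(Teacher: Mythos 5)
Your proof is correct and follows essentially the same route as the paper: reduce to the dyadic conditions of Lemma \ref{lemme} (resp.\ Theorem \ref{theo-bounded} for $p=\infty$), factor out $\|(a_n)\|_{\ell^{p'}}$, and pass between the dyadic and full sums via Cauchy's condensation principle after checking quasi-monotonicity of $v_j$ and $w_j$. The only (harmless) deviation is that for $v_{j+m}\le 2v_j$ you use the identity $W_k-\E^{k+j+m-1}W_k=(\mathrm{Id}-\E^{k+j+m-1})(W_k-\E^{k+j-1}W_k)$ with the trivial bound $\|\mathrm{Id}-\E^{k+j+m-1}\|_{L^p\to L^p}\le 2$, where the paper invokes the Burkholder inequality \eqref{Burkholder0} to get the same conclusion with constant $C_p$.
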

\noindent {\bf Proof.}
Let $1<p<\infty$. By Theorem \ref{theo-gen} and Lemma \ref{lemme},  we only have to check that
\begin{equation}\label{Lp-cond-eq}
 \sum_{\ell=0}^\infty
2^{\ell(1-1/p)} \Big(\sup_{k\in \N}
\|W_k-\E^{k+2^{\ell}-1}W_k\|_p+ \sup_{m\in \N}
\|\E^{m+1}W_{m+2^\ell}\|_p\Big)<\infty\, .
\end{equation}
This is actually equivalent to the condition \eqref{Lp-cond},
by the Cauchy condensation principle. Let us check the condition in the Cauchy condensation principle.
Notice that
$$
\sup_{m\in \N}
\|\E^{m+1}W_{m+\ell+1}\|_p \le \sup_{m\in \N}
\|\E^{m+2}W_{m+\ell+1}\|_p \le \sup_{m\in \N}
\|\E^{m+1}W_{m+\ell}\|_p\, ,
$$
and that, by the Burkholder inequality \eqref{Burkholder0}, for every $k, \ell\in \N$ and every $m\ge 1$
\begin{eqnarray*}
C_p
\|W_k-\E^{k+\ell-1}W_k\|_p
& \ge &
\|\big (W_k-\E^{k+\ell+m-1}W_k)^2+ (\E^{k+\ell+m-1}W_k-\E^{k+\ell-1}W_k)^2
\big)^{1/2}\|_p\\
&\ge & \|W_k-\E^{k+\ell+m-1}W_k\|_p
\end{eqnarray*}

The case $p=\infty$ can be proved similarly basing on Theorem
\ref{theo-bounded}.
 \hfill $\square$

\medskip

To conclude this section we shall prove that, when $p\ge 2$, there are situations where the condition $(a_n)_{n\in \N}\in \ell^{2}$ in the above theorem
is also necessary for the $\P$-a.e. convergence of $\sum_{n\in \N}
a_n W_n $.

\smallskip

\begin{defn}\label{Riesz}
We say that a sequence $(W_n)_{n\in \N}\subset
L^2(\Omega,\A,\P)$ is a Riesz system if there exists $C>0$ such that
for every $(b_n)_{n\in \N}\in \ell^2$,
$$
C^{-1}\|(b_n)_{n\in \N}\|_{\ell^2}\le\|\sum_{n\in \N} b_nW_n\|_2\le C \|(b_n)_{n\in \N}\|_{\ell^2}\, .
$$
If moreover $(W_n)_{n\in \N}$ is complete in $L^2(\Omega,\A,\P)$, we
say that is a Riesz basis.
\end{defn}

\begin{theo}\label{NSC}
Let $2<p\le \infty$. Suppose that $(W_n)_{n\in \N}\subset L^p(\Omega,\A,\P)$ such that
\begin{equation}
 \sum_{\ell=0}^\infty
\frac1{(\ell+1)^{1/p}} \Big(\sup_{k\in \N}
\|W_k-\E^{k+\ell-1}W_k\|_p+ \sup_{m\in \N}
\|\E^{m+1}W_{m+\ell}\|_p\Big)<\infty\,
\end{equation}
and that $(W_n)_{n\in \N}$ is
is a Riesz sequence in $L^2(\Omega, \mathcal{A}, \mathbb{P})$.
Then the series $\sum_{n\in \N}
a_n W_n $ does not converge $\P$-a.e. for every $(a_n)_{n\in \N}$ such that
$\sum_{n\in \N} |a_n|^2=\infty$.
\end{theo}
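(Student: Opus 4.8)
The plan is to read the conclusion as the failure of almost everywhere convergence, i.e. that the partial sums $S_M:=\sum_{n=0}^M a_nW_n$ are unbounded on a set of positive measure, and to produce this by playing an \emph{upper} $L^p$-bound for $S_M$ (coming from the maximal inequality of Theorem \ref{theo-Lp-weight}) against a \emph{lower} $L^2$-bound (coming from the Riesz sequence hypothesis) through a Paley--Zygmund (reverse H\"older) argument. Throughout I write $\sigma_M:=(\sum_{n=0}^M|a_n|^2)^{1/2}$ and $S^*:=\sup_N|S_N|$. First I would dispose of the case $p=\infty$: since $\|\cdot\|_q\le\|\cdot\|_\infty$ and $(\ell+1)^{-1/q}\le 1$, the standing hypothesis at $p=\infty$ forces $\widetilde\Delta_q<\infty$ for every finite $q$, while the Riesz hypothesis is unchanged; fixing any $q\in(2,\infty)$ reduces everything to a finite exponent, so from now on I assume $2<p<\infty$.

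Next I would establish the two-sided estimate. Applying Theorem \ref{theo-Lp-weight} to the truncated coefficient sequence $(a_0,\dots,a_M,0,0,\dots)\in\ell^2$ (whose $\ell^{p'}$-norm is $\sigma_M$, as $p'=2$ for $p>2$) and using $|S_M|\le\sup_N|\sum_{n\le N}a_nW_n|$ gives the upper bound $\|S_M\|_p\le C_p\widetilde\Delta_p\,\sigma_M$. On the other hand the Riesz sequence inequality of Definition \ref{Riesz} gives the lower bound $\|S_M\|_2\ge C^{-1}\sigma_M$. Combining the two yields a ratio bound $\|S_M\|_p\le M_0\,\|S_M\|_2$ with $M_0:=C\,C_p\widetilde\Delta_p$ \emph{independent of $M$}, while at the same time $\|S_M\|_2\ge C^{-1}\sigma_M\to\infty$ because $\sum_n|a_n|^2=\infty$.

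Then comes the Paley--Zygmund step, which is what the uniform ratio bound is for. For any $X$ with $0<\|X\|_2$ and $\|X\|_p\le M_0\|X\|_2$, splitting $\E|X|^2$ over $\{|X|<\tfrac12\|X\|_2\}$ and its complement and estimating the second piece by H\"older with exponents $p/2$ and $p/(p-2)$ gives $\tfrac34\|X\|_2^2\le M_0^2\|X\|_2^2\,\P(|X|\ge\tfrac12\|X\|_2)^{(p-2)/p}$, hence $\P(|X|\ge\tfrac12\|X\|_2)\ge c:=(3/(4M_0^2))^{p/(p-2)}>0$. Applying this with $X=S_M$ gives $\P(|S_M|\ge\tfrac12\|S_M\|_2)\ge c$ uniformly in $M$. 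Finally I would send the threshold to infinity: for fixed $L>0$, since $\tfrac12\|S_M\|_2\to\infty$ I may choose $M$ with $\tfrac12\|S_M\|_2>L$, whence $\P(S^*>L)\ge\P(|S_M|>L)\ge c$; as $\{S^*>L\}\downarrow\{S^*=\infty\}$, letting $L\to\infty$ yields $\P(S^*=\infty)\ge c>0$. On $\{S^*=\infty\}$ the partial sums are unbounded, so the series diverges there, and therefore it cannot converge $\P$-a.e., which is exactly the assertion.

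The main obstacle is conceptual rather than computational: one must recognize that the available tools deliver divergence only on a set of positive measure (here of measure at least $c$), which is precisely what ``does not converge $\P$-a.e.'' requires, and that the whole mechanism hinges on the Paley--Zygmund constant $c$ being \emph{uniform in $M$}. That uniformity is exactly what the interplay between the maximal $L^p$-bound of Theorem \ref{theo-Lp-weight} (upper) and the Riesz $L^2$-bound (lower) provides, so the real content is checking that both hypotheses feed cleanly into the ratio bound $\|S_M\|_p\le M_0\|S_M\|_2$; the reduction from $p=\infty$ to a finite exponent and the passage $L\to\infty$ are then routine.
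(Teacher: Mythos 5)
Your proposal is correct and follows essentially the same route as the paper: an upper $L^p$ maximal bound from Theorem \ref{theo-Lp-weight}, a lower $L^2$ bound from the Riesz sequence hypothesis, and a Paley--Zygmund argument giving a probability bounded below uniformly in $M$, after which letting the threshold tend to infinity yields divergence on a set of positive measure. The only (harmless) differences are that you prove the Paley--Zygmund inequality inline via H\"older rather than quoting it, apply it to $|S_M|$ instead of the running maximum, and spell out the reduction of the case $p=\infty$ to a finite exponent, which the paper leaves implicit.
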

\noindent {\bf Remark.} We do not know whether the series is $\P$-a.e.
divergent under the above conditions.

\smallskip

\noindent {\bf Proof.} We proceed as in the proof of Theorem 2.6 of
\cite{FanETDS}. Recall first the following Paley-Zygmund inequality
(see Kahane \cite{Kahane} for the case $q=2$, the proof being the same for general $q>1$):
Let $Z\in L^q(\Omega,\A,\P)$ be non negative ($q>1$). For any $0<\lambda <1$, we have
$$
\P(Z\ge \lambda \E Z)\ge \Big( (1-\lambda)\frac{\E Z}{\|Z\|_q}
\Big)^{q/(q-1)}\, .
$$

We apply the inequality  with $q=p/2$ and $Z=Z_N= \sup_{0\le n\le N} |\sum_{k=0}^n
a_k W_k|^2$. By Theorem \ref{theo-Lp-weight} and the hypothesis that $(W_n)_{n\in \N}$
is a Riesz sequence, we know that
there exist $C,D>0$ such that
$$
D\sqrt{\sum_{k=0}^N|a_k|^2} \le \E Z_N\le \|Z_N\|_q\le C \sqrt{\sum_{k=0}^N|a_k|^2}\, .
$$
Hence, we infer that
$$
\P\big(Z_N\ge \lambda D\sum_{k=0}^N|a_k|^2\big)\ge \Big((1-\lambda)\frac{D}{C}\Big)^{q/(q-1)}\, .
$$
Since, $\sum_{n\in \N}|a_n|^2=\infty$, the result follows. \hfill $\square$

\section{Analysis using decreasing filtration}\label{SectDecreasing}

We can also use decreasing filtrations to analyze our series. Recall that
$(\Omega, \A, \mathbb{P})$ is a probability space and $(Z_n)_{n\ge 0}\subset L^1(\Omega,\A,\P)$ is a
sequence of  random variables such that $\mathbb{E} Z_n =0$
for all $n$. Our object of study is  the
random series
\begin{equation}
\sum_{n=0}^\infty Z_n(\omega).
\end{equation}
Suppose that we are given an decreasing  filtration $(\B_n)_{n\in \N}$. Assume that
$\B_0=\A$.  Let
$\B_\infty:=\bigcap_{n=0}^\infty \B_n$. We will suppose that
\begin{equation}\label{H1}
\forall n, \quad \mathbb{E}(Z_n|\B_\infty)=0
\end{equation}
which implies $\mathbb{E} Z_n =0$.

For $n\ge 0$, denote the partial
sum $$
S_n = \sum_{k=0}^n Z_k.
$$
For $N\ge 0$, define the maximal function
$$
      S_N^*(\omega) =\max_{0\le n \le N} |S_n(\omega)|.
$$
The basic idea is to convert the random series into reverse martingales.

\smallskip
For every $n\in \N\cup\{\infty\}$, denote $\E^n:= \E(\cdot|\B_n)$ and
$$\mathfrak{d}_n =\E_{n}-\E_{n+1}.
$$

Let us state the useful properties of the operators $\mathfrak{d}_n$ in the following proposition.
The following lemma is the same as Lemma \ref{martingale}.

\begin{lem} \label{martingale-dec} Let $h \in L^1(\Omega, \A, \P)$ and $f, g\in L^2(\Omega, \A, \P)$. The operators $\mathfrak{d}_n$ have the following properties:\\
\indent {\rm (1)}\
For and $n\ge 0$, $\DD_n$ is $\B_{n}$-measurable and $\E_{n+1}\DD_{n} f=0$.\\
\indent {\rm (2)}\
For  any distinct integers $n$ and $m$,
$\mathfrak{d}^n f$ and $\mathfrak{d}^m g$ are orthogonal.\\
\indent {\rm (3)}\ For  any $N_1<N_2$ we have
$$
     \sum_{n=N_1}^{N_2} \|\mathfrak{d}_n f\|_2^2 = \|\E_{N_1} f - \E_{N_2 +1} f\|_2^2.
$$
\end{lem}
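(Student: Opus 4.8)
The statement to prove is Lemma~\ref{martingale-dec}, which asserts for the reverse filtration $(\B_n)$ exactly the three properties that Lemma~\ref{martingale} established for the increasing filtration $(\A_n)$. The plan is to mirror the (omitted) proof of Lemma~\ref{martingale} step by step, keeping careful track of the single structural difference: here $\E_n=\E(\cdot|\B_n)$ is conditioning on a \emph{shrinking} $\sigma$-algebra, so that $\B_n\supseteq\B_{n+1}$ and the tower property reads $\E_m\E_n=\E_{\max(m,n)}$. Accordingly $\mathfrak{d}_n=\E_n-\E_{n+1}$ projects onto the ``fine'' information in $\B_n$ not retained in $\B_{n+1}$, and the natural objects are reverse martingales. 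Since the authors already flag this as routine (``left to the reader'' in the parallel Lemma~\ref{martingale}), I expect each item to reduce to a one-line application of the tower property and the contraction of conditional expectation in $L^2$.

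\emph{Item (1).} I would show $\mathfrak{d}_n f$ is $\B_n$-measurable and $\E_{n+1}\mathfrak{d}_n f=0$. Measurability is immediate: $\E_n f$ is $\B_n$-measurable by definition, and $\E_{n+1}f$ is $\B_{n+1}$-measurable, hence $\B_n$-measurable since $\B_{n+1}\subseteq\B_n$; the difference is therefore $\B_n$-measurable. For the annihilation, apply $\E_{n+1}$ and use the tower property in the shrinking direction: $\E_{n+1}\E_n f=\E_{n+1}f$ (because $\B_{n+1}\subseteq\B_n$), while $\E_{n+1}\E_{n+1}f=\E_{n+1}f$, so $\E_{n+1}\mathfrak{d}_n f=\E_{n+1}f-\E_{n+1}f=0$. (I note the statement as printed has a typo, writing $\DD_n$ for $\mathfrak{d}_n$; I would silently use $\mathfrak{d}_n$ throughout.)

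\emph{Item (2).} For distinct $n<m$, orthogonality of $\mathfrak{d}_n f$ and $\mathfrak{d}_m g$ follows from item (1). Since $\mathfrak{d}_m g$ is $\B_m\subseteq\B_{n+1}$-measurable, I would condition the inner product on $\B_{n+1}$:
\[
\E\big[(\mathfrak{d}_n f)(\mathfrak{d}_m g)\big]
=\E\big[\E_{n+1}\big((\mathfrak{d}_n f)(\mathfrak{d}_m g)\big)\big]
=\E\big[(\mathfrak{d}_m g)\,\E_{n+1}(\mathfrak{d}_n f)\big]=0,
\]
where I pulled the $\B_{n+1}$-measurable factor $\mathfrak{d}_m g$ out of $\E_{n+1}$ and then used $\E_{n+1}\mathfrak{d}_n f=0$ from item (1).

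\emph{Item (3).} For the telescoping $L^2$-identity over $N_1\le n\le N_2$, I would first expand the left-hand side using item (2): the cross terms vanish by orthogonality, so $\sum_{n=N_1}^{N_2}\|\mathfrak{d}_n f\|_2^2=\big\|\sum_{n=N_1}^{N_2}\mathfrak{d}_n f\big\|_2^2$. The sum telescopes, $\sum_{n=N_1}^{N_2}(\E_n-\E_{n+1})f=\E_{N_1}f-\E_{N_2+1}f$, giving the claimed $\|\E_{N_1}f-\E_{N_2+1}f\|_2^2$. The only place requiring the slightest care is the orientation of the telescope: because the filtration decreases, $\E_{N_1}$ (the coarser, larger-$\sigma$-algebra end) appears with the plus sign and $\E_{N_2+1}$ with the minus sign, which is precisely the order written in the statement—note this is the reverse of Lemma~\ref{martingale}(3), where $\E^{N_2+1}$ came first.

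\emph{Main obstacle.} There is no genuine difficulty here; the entire content is bookkeeping about which direction the tower property contracts. The one point I would state explicitly, to avoid a sign or indexing slip, is the convention $\E_m\E_n=\E_{\max(m,n)}$ for nested conditioning, since every one of the three items rests on it and the decreasing orientation makes it easy to invert a containment by mistake.
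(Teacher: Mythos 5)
Your proof is correct and is exactly the routine argument the authors have in mind: the paper leaves this lemma unproved (the increasing-filtration version is explicitly "left to the reader" and the decreasing version is declared to be the same), and your three steps — tower property in the direction $\B_{n+1}\subseteq\B_n$ for (1), conditioning on $\B_{n+1}$ and pulling out the $\B_m$-measurable factor for (2), orthogonal expansion plus telescoping for (3) — are the standard and intended route. Your remarks on the typo $\DD_n$ versus $\mathfrak{d}_n$ and on the orientation of the telescope in (3) are both accurate.
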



\medskip
The first assertion implies that for any sequence $(f_n)\in L^1(\Omega, \A, \P)$,
$(\mathfrak{d}_n f_n)$ is a sequence of reverse martingale differences.
\medskip

For any integral random variable such that $\mathbb{E}(Z|\B_\infty)=0$, we may decompose it into martingale as follows.
We have the following analogue of Lemma \ref{decomposition}, whose proof is the same (it does not matter here that we are dealing with reverse
martingale differences rather that martingale differences).

\begin{lem} \label{decomposition2} Let
$Z\in L^p(\Omega,\A,\P)$, $p\ge 1$ with $\E (Z|\B_\infty)=0$. We have the decomposition
\begin{equation}\label{decomp}
Z=\sum_{n=0}^\infty \DD_{n}Z \, ,
\end{equation}
where the series converges in $L^p$ and $\P$-a.s. Moreover we have
\begin{equation}\label{Rio}
\|Z\|_p\le \max(1,\sqrt{p-1}) \Big(\sum_{n=0}^\infty \|\mathfrak{d}_{n}Z\|_p^{p'}
\Big)^{1/p'}\, ,
\end{equation}
where $p':=\min(2,p)$ and, if $p>1$,
\begin{equation}\label{Burkholder}
\Big\|\Big(\sum_{n=0}^\infty |\mathfrak{d}_{n}Z|^2\Big)^{1/2}\Big\|_p \le C_p
\|Z\|_p\, .
\end{equation}
\end{lem}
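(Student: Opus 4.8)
The plan is to mirror the proof of Lemma \ref{decomposition}, replacing Doob's forward martingale convergence theorem by its reverse-martingale counterpart and reducing the moment inequalities to the forward case by reversing the time index. First I would observe that, under the standing hypothesis $\E(Z|\B_\infty)=0$, the sequence $(\E_n Z)_{n\in\N}$ is a reverse martingale for the decreasing filtration $(\B_n)_{n\in\N}$. By the reverse martingale convergence theorem it is uniformly integrable and converges, both in $L^p$ and $\P$-a.s., to $\E(Z|\B_\infty)=0$. Since $\B_0=\A$ we have $\E_0 Z=Z$, so the telescoping identity
\[
\sum_{n=0}^N \mathfrak{d}_n Z=\sum_{n=0}^N(\E_n Z-\E_{n+1}Z)=\E_0 Z-\E_{N+1}Z=Z-\E_{N+1}Z,
\]
together with $\E_{N+1}Z\to 0$, yields the decomposition \eqref{decomp} with convergence in $L^p$ and $\P$-a.s. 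This is exactly the argument of Lemma \ref{decomposition}, read with the roles of $0$ and $\infty$ interchanged.

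For the two moment inequalities \eqref{Rio} and \eqref{Burkholder}, I would exploit the fact that, for each fixed $N$, the finite family $(\mathfrak{d}_n Z)_{0\le n\le N}$ becomes a genuine martingale difference sequence once time is reversed. Setting $\G_k:=\B_{N+1-k}$ for $0\le k\le N+1$ produces an \emph{increasing} filtration; the process $(\E_{N+1-k}Z)_{0\le k\le N+1}$ is a martingale for $(\G_k)$, and its successive increments are, up to relabelling, precisely the reverse differences $\mathfrak{d}_n Z$, $0\le n\le N$. Hence the von Bahr--Esseen inequality \cite{BE} (for $1\le p\le 2$) and Rio's inequality \cite{Rio} (for $p\ge 2$) apply verbatim to the partial sum $\sum_{n=0}^N \mathfrak{d}_n Z=Z-\E_{N+1}Z$, giving
\[
\|Z-\E_{N+1}Z\|_p\le \max(1,\sqrt{p-1})\Big(\sum_{n=0}^N \|\mathfrak{d}_n Z\|_p^{p'}\Big)^{1/p'},
\]
and likewise Burkholder's inequality controls the quadratic variation of the same reversed martingale. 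Since the $\ell^{p'}$-norm on the right is symmetric in its terms, the reordering induced by the reversal is immaterial. Letting $N\to\infty$ and using the $L^p$-convergence $\E_{N+1}Z\to 0$ on the left, together with monotone convergence of the sums on the right, delivers \eqref{Rio} and \eqref{Burkholder}.

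I do not expect a serious obstacle, which matches the remark preceding the statement that ``the proof is the same.'' The one point requiring care is the legitimacy of transporting the martingale-difference inequalities to \emph{reverse} martingale differences; this is handled cleanly by the finite-horizon time-reversal above, so that no new inequality needs to be established and one may invoke \cite{BE} and \cite{Rio} directly. Passing to the limit in $N$ is then routine, the only input being the $L^p$ convergence $\E_{N+1}Z\to 0$ supplied by the reverse martingale convergence theorem. $\Box$
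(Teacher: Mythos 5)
Your proposal is correct and follows essentially the same route as the paper, which simply declares that the proof of Lemma \ref{decomposition} carries over verbatim to the reverse-martingale setting. Your finite-horizon time reversal $\G_k:=\B_{N+1-k}$ merely makes explicit the justification the paper leaves implicit for applying the von Bahr--Esseen, Rio and Burkholder inequalities to reverse martingale differences, and the limiting argument via $\E_{N+1}Z\to 0$ is exactly the intended one.
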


\medskip

We call $\mathfrak{d}_n Z$ the {\em  $n$-th order detail} of $Z$ with respect to $(\B_n)$.
Since $\B_n$ is decreasing, we say that $\mathfrak{d}_n Z$ is a detail of
 higher order than $\mathfrak{d}_m Z$ when $n<m$.

\medskip

\begin{theo}\label{theo-gen-decreasing}
Let $(Z_n)_{n\in \N }\subset L^p(\Omega,\A,\P)$, $p>1$, be such that
 $\E_\infty (Z_n)=0$ for every $n\in \N$. Then,
for every $N\in \N  $,
\begin{equation}
\| S_N^* \|_p
\le 2K_p \Big( \sum_{k=1}
^N \Big( \sum_{\ell=k}^N \|\mathfrak{d}_{\ell-k}Z_\ell\|_{p'}^{p'}\Big)^{1/p'}+ \sum_{k=0}^\infty \Big(\sum_{\ell=0}^N
\|\mathfrak{d}_{\ell+k} Z_{\ell}\|_{p'}^{p'}\Big)^{1/p'}\Big)\, .
\end{equation}
Consequently, the series $\sum_{n\in \N}g_n $
converges $\P$-a.s. and in $L^p(\Omega,\A,\P)$ under the following conditions
\begin{equation}\label{series1+} \sum_{k=0}^\infty\Big( \sum_{n=0}^\infty \|\mathfrak{d}_{n+k}
Z_{n}\|_{p'}^{p'}\Big)^{1/p'}<\infty
\end{equation} and
\begin{equation}\label{series2+}
\sum_{k=1}^\infty\Big( \sum_{n =0}^\infty \|\mathfrak{d}_n
Z_{n+k}\|_{p'}^{p'}\Big)^{1/p'}<\infty\,.
\end{equation}
If $(Z_n)$ is adapted to $(\B_n)$, the condition (\ref{series2+}) on the higher order details is trivially satisfied.
\end{theo}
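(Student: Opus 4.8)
The plan is to mirror the proof of Theorem \ref{theo-gen}, replacing the forward martingale machinery by its reverse counterpart furnished by Lemma \ref{martingale-dec} and Lemma \ref{decomposition2}. First I would split each term along the decreasing filtration, writing $Z_n = X_n + Y_n$ with $X_n = Z_n - \E_n Z_n$ and $Y_n = \E_n Z_n$. Using $\B_\infty \subseteq \B_n$ and the tower property one checks $\E_\infty X_n = \E_\infty Y_n = 0$, so both summands admit the detail expansion \eqref{decomp} of Lemma \ref{decomposition2}, and linearity reduces the problem to treating $\sum X_n$ and $\sum Y_n$ separately, via $S_N^* \le S_{N,X}^* + S_{N,Y}^*$.

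The next step is to identify which details vanish. Since $\E_j X_n = \E_j Z_n - \E_j\E_n Z_n = 0$ for every $j \ge n$ (again because $\B_j \subseteq \B_n$), we get $\mathfrak{d}_j X_n = 0$ for $j \ge n$, hence $X_n = \sum_{k=1}^n \mathfrak{d}_{n-k} X_n$ and, comparing details, $\mathfrak{d}_{n-k} Z_n = \mathfrak{d}_{n-k} X_n$ for $k \ge 1$. Dually, $Y_n = \E_n Z_n$ is $\B_n$-measurable, so $\mathfrak{d}_j Y_n = 0$ for $j < n$, giving $Y_n = \sum_{k=0}^\infty \mathfrak{d}_{n+k} Y_n$ and $\mathfrak{d}_{n+k} Z_n = \mathfrak{d}_{n+k} Y_n$ for $k \ge 0$. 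Thus the higher-order details of $Z_n$ (smaller index) come only from $X_n$ and feed condition \eqref{series2+}, while the lower-order details (larger index) come only from $Y_n$ and feed condition \eqref{series1+}.

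With these expansions I would bound the partial sums exactly as in Propositions \ref{mart1} and \ref{mart2}: bounding $|\sum_{\ell=0}^n Y_\ell| \le \sum_{k=0}^\infty \max_{0\le m\le N}|\sum_{\ell=0}^m \mathfrak{d}_{\ell+k} Y_\ell|$, and similarly for $X_n$ with the shift running as $\mathfrak{d}_{\ell-k}$, $1\le k\le \ell$. For each fixed $k$, the increments $\mathfrak{d}_{\ell+k} Y_\ell$ (resp. $\mathfrak{d}_{\ell-k} X_\ell$) are reverse martingale differences by part (1) of Lemma \ref{martingale-dec}. Applying Doob's maximal inequality together with the von Bahr--Esseen--Rio inequality \eqref{Rio} bounds each inner $L^p$-norm by $2K_p(\sum_\ell \|\mathfrak{d}_{\ell\pm k} Z_\ell\|_p^{p'})^{1/p'}$; summing over $k$ yields the announced maximal inequality. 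The passage to $\P$-a.s. and $L^p$ convergence is then identical to the end of Proposition \ref{mart1}: the tail bound on $\|\sup_{p,q\ge N'}|S_p - S_q|\|_p$ tends to $0$ by \eqref{series1+}--\eqref{series2+} and dominated convergence on $\N$, and Fatou's lemma upgrades this to almost sure convergence.

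The main obstacle is the reverse-martingale maximal inequality for a fixed shift $k$, since the index of $\mathfrak{d}$ runs against the decreasing filtration, so the partial sums $m \mapsto \sum_{\ell=0}^m \mathfrak{d}_{\ell+k} Y_\ell$ are not forward martingales. I would handle this by reversing time on a finite block: setting $\G_i := \B_{N+k-i}$ turns the differences into a genuine forward martingale difference sequence, at the cost that the varying summation limit becomes the lower one. Controlling $\max_a |T_N - T_{a-1}|$ by $2\sup_b |T_b|$ and then invoking Doob for the forward martingale $T$ is precisely what produces the extra factor $2$ in front of $K_p$ in the statement. (The subscripts $\|\cdot\|_{p'}$ in the displayed maximal inequality should read $\|\cdot\|_p$.)
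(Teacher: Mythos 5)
Your proposal is correct and follows exactly the route the paper intends (the paper omits this proof, stating only that it is ``similar to that of Theorem \ref{theo-gen}''): the decomposition $Z_n=(Z_n-\E_nZ_n)+\E_nZ_n$, the identification of which details $\mathfrak{d}_jX_n$, $\mathfrak{d}_jY_n$ vanish, and the Doob plus von Bahr--Esseen--Rio estimate for each fixed shift $k$. You also correctly supply the one genuinely non-routine ingredient --- time-reversing each finite block so that the reverse martingale differences become a forward martingale, which accounts for the factor $2$ in $2K_p$ --- and you are right that the subscripts $\|\cdot\|_{p'}$ in the displayed maximal inequality are a typo for $\|\cdot\|_p$.
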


The proof being similar to that of Theorem \ref{theo-gen}, we leave it
to the reader. Some similar arguments can be found  \cite{FanETDS}.

\medskip

In order to apply Theorem \ref{theo-gen-decreasing}, it will be convenient to use the sufficient conditions in the next lemma.
\medskip

\begin{lem}\label{lemme-decreasing}
Let $(Z_n)_{n\in \N }\subset L^p(\Omega,\A,\P)$, $p>1$, with $\E_\infty Z_n =0$ for all $n$. The condition \eqref{series2+} on the higher order details
is satisfied  if
\begin{equation}\label{series1bis}
\sum_{\ell=0}^\infty 2^{\ell(1-1/p)}\Big( \sum_{n=2^{\ell}}^\infty \|Z_n - \E_{n-2^{\ell} +1}Z_n\|_{p'}^{p'} \Big)^{1/p'}<\infty\,.
\end{equation}
The condition \eqref{series1+} on the lower order details is satisfied
if
\begin{equation}\label{series2bis}
\sum_{\ell=0}^\infty 2^{\ell(1-1/p)}\Big( \sum_{n=0}^\infty
\|\E_{n +2^\ell -1} Z_n \|_{p'}^{p'} \Big)^{1/p'}<\infty\, .
\end{equation}
\end{lem}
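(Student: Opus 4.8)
The plan is to mirror the proof of Lemma \ref{lemme} verbatim, replacing the forward-martingale identities by their reverse-martingale counterparts from Lemma \ref{martingale-dec} and Lemma \ref{decomposition2}. I will carry out the details for $p\ge 2$, so that $p'=2$; the case $1<p<2$ is entirely analogous (and slightly simpler, no H\"older step over dyadic blocks being needed). The two structural facts I will use repeatedly are the telescoping identity $\sum_{i=0}^{N}\mathfrak{d}_i Z=\E_0 Z-\E_{N+1}Z=Z-\E_{N+1}Z$ (since $\B_0=\A$), together with its tail form $\sum_{i\ge N+1}\mathfrak{d}_i Z=\E_{N+1}Z$ coming from the full decomposition $Z=\sum_{i\ge 0}\mathfrak{d}_i Z$, and the reverse Burkholder inequality \eqref{Burkholder}. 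To apply \eqref{Burkholder} to a partial family of details I will use the idempotence $\mathfrak{d}_i\mathfrak{d}_{i'}=\delta_{i,i'}\mathfrak{d}_i$ implicit in Lemma \ref{martingale-dec}: for any index set $I$ and $W:=\sum_{i\in I}\mathfrak{d}_i Z$ one has $\mathfrak{d}_i W=\mathbf 1_{I}(i)\,\mathfrak{d}_i Z$, whence $\|(\sum_{i\in I}|\mathfrak{d}_i Z|^2)^{1/2}\|_p\le C_p\|\sum_{i\in I}\mathfrak{d}_i Z\|_p$.

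For the condition \eqref{series2+} on the higher order details I first split the outer sum over $k\ge 1$ into dyadic blocks $k\in[2^\ell,2^{\ell+1}-1]$ and apply Cauchy--Schwarz in each block (which carries $2^\ell$ terms):
$$\sum_{k=1}^\infty\Big(\sum_{n=0}^\infty\|\mathfrak{d}_n Z_{n+k}\|_p^2\Big)^{1/2}\le\sum_{\ell=0}^\infty 2^{\ell/2}\Big(\sum_{k=2^\ell}^{2^{\ell+1}-1}\sum_{n=0}^\infty\|\mathfrak{d}_n Z_{n+k}\|_p^2\Big)^{1/2}.$$
I then change the order of summation via $j=n+k$, so that for fixed $j$ the indices of the details $\mathfrak{d}_{j-k}Z_j$ occurring in the block run over a set contained in $\{0,1,\dots,j-2^\ell\}$. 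Bounding the sum over this band by H\"older (picking up the factor $2^{\ell(1-2/p)}$), passing from $\ell^p$ to $\ell^2$ inside the expectation, then enlarging the index set inside the square function up to $\{0,\dots,j-2^\ell\}$ (a pointwise increase), and finally applying the reverse Burkholder inequality to the head sum $\sum_{i=0}^{j-2^\ell}\mathfrak{d}_i Z_j=Z_j-\E_{j-2^\ell+1}Z_j$, I obtain
$$\sum_{k=2^\ell}^{\min(j,\,2^{\ell+1}-1)}\|\mathfrak{d}_{j-k}Z_j\|_p^2\le C_p^2\,2^{\ell(1-2/p)}\,\|Z_j-\E_{j-2^\ell+1}Z_j\|_p^2.$$
Summing over $j\ge 2^\ell$ and collecting the powers of $2$ (namely $2^{\ell/2}\cdot 2^{\ell(1/2-1/p)}=2^{\ell(1-1/p)}$) yields exactly \eqref{series1bis}.

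The condition \eqref{series1+} on the lower order details is treated by the mirror-image argument. This time I split $\sum_{k\ge 0}$ into blocks $k\in[2^\ell-1,2^{\ell+1}-2]$, so that for each $n$ the details $\mathfrak{d}_{n+k}Z_n$ have indices in the tail $\{n+2^\ell-1,n+2^\ell,\dots\}$. After the same H\"older and $\ell^p\to\ell^2$ steps I enlarge the index set inside the square function to the full tail $\{i\ge n+2^\ell-1\}$ and apply reverse Burkholder to the tail sum $\sum_{i\ge n+2^\ell-1}\mathfrak{d}_i Z_n=\E_{n+2^\ell-1}Z_n$, which gives $\sum_{k=2^\ell-1}^{2^{\ell+1}-2}\|\mathfrak{d}_{n+k}Z_n\|_p^2\le C_p^2\,2^{\ell(1-2/p)}\,\|\E_{n+2^\ell-1}Z_n\|_p^2$; summing over $n$ and then over $\ell$ produces \eqref{series2bis}.

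I expect no serious obstacle, since the whole argument runs parallel to Lemma \ref{lemme}. The only points deserving care are, first, the bookkeeping of indices under the reindexing $j=n+k$ (respectively under the change of summation order), and, second, the verification that the two different dyadic block choices---starting at $k=2^\ell$ for \eqref{series2+} and at $k=2^\ell-1$ for \eqref{series1+}---are precisely the ones that make the head sum $Z_n-\E_{n-2^\ell+1}Z_n$ and the tail sum $\E_{n+2^\ell-1}Z_n$ of the target conditions appear. The one ingredient beyond the literally quoted statements is the partial-index version of the reverse Burkholder inequality, which I obtain from the idempotence $\mathfrak{d}_i\mathfrak{d}_{i'}=\delta_{i,i'}\mathfrak{d}_i$ as explained above.
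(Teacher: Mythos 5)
Your proposal is correct and follows exactly the route the paper intends: the paper omits the proof of Lemma \ref{lemme-decreasing}, stating only that it is ``similar to that of Lemma \ref{lemme}'', and your argument is precisely that adaptation, with the dyadic blocking, the H\"older step, and the reverse Burkholder inequality \eqref{Burkholder} applied to the head sum $Z_j-\E_{j-2^\ell+1}Z_j$ and the tail sum $\E_{n+2^\ell-1}Z_n$ in the correct (mirrored) roles. The bookkeeping of indices and the justification of the partial-index Burkholder bound via $\mathfrak{d}_iW=\mathfrak{d}_iZ_j$ or $0$ are both sound.
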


The proof is similar to that of Lemme \ref{lemme} and we leave it to the reader.

Results similar to Theorems \ref{theo-bounded}, \ref{theo-Lp-weight} and \ref{NSC} holds.

\section{Convergence of ergodic series}\label{SectES}
Let $(X, \mathcal{B}, \mu, T)$ be a measure-preserving dynamical
system. By an ergodic series we mean a series of the form
\begin{equation}\label{ES}
   \sum_{n=0}^\infty a_n f_n(T^n x)
\end{equation}
where it is assumed that the $f_n$'s are integrable with $\mathbb{E} f_n=0$ and that $(a_n)$ is a sequence of numbers. The almost everywhere convergence of such series was  studied in \cite{FanETDS} where the martingale method was already used, and which is a  motivation of our present study.

In this case, the natural filtration that we can use to analyze the series is the one defined by
$$\B_n =T^{-n}\B.$$
 It is a decreasing filtration. Let $L$ be the transfer operator associated to the
 dynamical system which can be defined by
 $$
     \int f \cdot Lg d\mu = \int f\circ T \cdot g d\mu \qquad (\forall f\in L^\infty(\mu), \forall g \in L^1(\mu)).
 $$

\begin{theo} \label{main} Assume that $(X, \mathcal{B}, T, \mu)$ is an ergodic measure-preserving dynamical system.
      Let $(f_n)_{n\in \N} \subset L^p(\mu)$, $1<p\le \infty$. Suppose
      \\
      \ \indent {\rm (H1)}\ \  $\forall n\in \N$,  $
      \lim_{m\to \infty} \|\mathbb{E} (f_n|T^{-m}\mathcal{B})\|_p=0$; \\
      \ \indent {\rm (H2)} \ $
          \sum_{i=0}^\infty 2^{\ell(1-1/p)} \sup_{n\ge 0} \|L^{2^\ell} f_n\|_p <\infty
      $.\\
      Then for any complex sequence $(a_n)_{n\in \N}\subset \mathbb{C}$ such that $\sum_{n=0}^\infty |a_n|^{p'}<\infty$,
      the ergodic series $\sum_{n=0}^\infty a_n f_n(T^n x)$ converges a.e., and in $L^p(X, \mathcal{B}, T, \mu)$ if $p<\infty$. Moreover,
      when $1<p<\infty$, $\sup_{N\in \N}|\sum_{n=0}^N a_n f_n(T^n x)|\in L^p(X, \mathcal{B}, T, \mu)$ and when $p=\infty$, $\E(\exp(\beta \sup_{N\in \N}|\sum_{n=0}^Na_nf_n\circ T^n|^2))
      <\infty$ for some  $\beta>0$.
      \end{theo}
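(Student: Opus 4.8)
The plan is to recognize the ergodic series as a special instance of the adapted framework of Section \ref{SectDecreasing}: I set $Z_n:=a_nf_n\circ T^n\in L^p(\mu)$ and use the decreasing filtration $\B_n:=T^{-n}\B$, so that the a.e.\ and $L^p$ convergence will follow once I check the hypotheses of Theorem \ref{theo-gen-decreasing} (via the practical criteria of Lemma \ref{lemme-decreasing}) for this sequence. The quantitative statements — integrability of $\sup_N|\sum_{n\le N}a_nf_n\circ T^n|$ for $1<p<\infty$ and the exponential bound for $p=\infty$ — will then come from the decreasing analogues of Theorems \ref{theo-Lp-weight} and \ref{theo-bounded} (announced after Lemma \ref{lemme-decreasing}), applied with $W_n=f_n\circ T^n$ and weights $(a_n)\in\ell^{p'}$.

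The decisive step is a dictionary translating the conditional expectations $\E_m:=\E(\cdot\mid T^{-m}\B)$ into powers of the transfer operator $L$. Writing $Ug:=g\circ T$ for the Koopman operator, measure-preservation gives $LU=\mathrm{id}$, hence $L^mU^n=L^{m-n}$ for $m\ge n$, and the isometry $\|g\circ T^j\|_p=\|g\|_p$. The basic identity is $\E(g\mid T^{-m}\B)=(L^mg)\circ T^m$, proved by testing against the $T^{-m}\B$-measurable functions $\varphi\circ T^m$ and using the iterated relation $\int \varphi\circ T^m\cdot g\,d\mu=\int \varphi\cdot L^mg\,d\mu$. Two consequences follow immediately. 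First, $f_n\circ T^n$ is $T^{-n}\B$-measurable, so $(Z_n)$ is adapted to $(\B_n)$; by the remark after Theorem \ref{theo-gen-decreasing} the higher-order-detail condition \eqref{series2+} holds trivially. Second, for $m\ge n$ one gets $\E_m Z_n=a_n\bigl(L^{m-n}f_n\bigr)\circ T^m$, whose $L^p$-norm is $|a_n|\,\|L^{m-n}f_n\|_p$; since (H1) asserts $\|L^mf_n\|_p=\|\E(f_n\mid T^{-m}\B)\|_p\to0$, the reverse martingale $(\E_m Z_n)_m$ converges to $\E_\infty Z_n$ in $L^p$ with limit $0$, verifying the standing hypothesis \eqref{H1} that $\E_\infty Z_n=0$.

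It remains to establish the lower-order-detail condition \eqref{series1+}, for which I invoke Lemma \ref{lemme-decreasing}: it suffices to check \eqref{series2bis}. Using the dictionary with $m=n+2^\ell-1\ge n$,
$$\|\E_{n+2^\ell-1}Z_n\|_p=|a_n|\,\|L^{2^\ell-1}f_n\|_p .$$
Bounding $\|L^{2^\ell-1}f_n\|_p\le\sup_{k}\|L^{2^\ell-1}f_k\|_p=:b_\ell$ (independent of $n$) and factoring out the weights gives
$$\Bigl(\sum_{n}|a_n|^{p'}\|L^{2^\ell-1}f_n\|_p^{p'}\Bigr)^{1/p'}\le b_\ell\,\|(a_n)\|_{\ell^{p'}},$$
so \eqref{series2bis} reduces to $\|(a_n)\|_{\ell^{p'}}\sum_{\ell}2^{\ell(1-1/p)}b_\ell<\infty$. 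Since $L$ is an $L^p$-contraction ($L$ is positive, $L1=1$, integral-preserving) and $2^\ell-1\ge 2^{\ell-1}$ for $\ell\ge1$, one has $\|L^{2^\ell-1}f_k\|_p\le\|L^{2^{\ell-1}}f_k\|_p$, whence $\sum_{\ell}2^{\ell(1-1/p)}b_\ell\le \|f\|_p+2^{1-1/p}\sum_{\ell\ge0}2^{\ell(1-1/p)}\sup_k\|L^{2^\ell}f_k\|_p$, which is finite by (H2). With $(a_n)\in\ell^{p'}$ this closes the estimate, so Theorem \ref{theo-gen-decreasing} yields a.e.\ and $L^p$ convergence of $\sum_n Z_n$.

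The main obstacle is precisely this operator dictionary together with the index bookkeeping: one must be careful that the dilation index $n$ inside $f_n\circ T^n$ cancels against the filtration level through $L^mU^n=L^{m-n}$, and that the mismatch between the $2^\ell-1$ appearing in the detail estimate and the $2^\ell$ appearing in (H2) is absorbed by contractivity of $L$. Once convergence is in hand, the maximal-function integrability for $1<p<\infty$ and the estimate $\E\bigl(\exp(\beta\sup_N|\sum_{n\le N}a_nf_n\circ T^n|^2)\bigr)<\infty$ for $p=\infty$ follow by feeding the same verification of $\widetilde\Delta_p<\infty$ (equivalent to (H2) by Cauchy condensation) into the decreasing analogues of Theorems \ref{theo-Lp-weight} and \ref{theo-bounded}.
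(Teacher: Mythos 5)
Your proposal is correct and follows essentially the same route as the paper: it takes the decreasing filtration $\B_n=T^{-n}\B$, uses adaptedness to dispose of one detail condition, and verifies the other via the identity $\E(f_n\circ T^n\mid T^{-m}\B)=(L^{m-n}f_n)\circ T^m$ together with Lemma \ref{lemme-decreasing} and Theorem \ref{theo-gen-decreasing}. You even supply details the paper leaves implicit (the check that $\E_\infty Z_n=0$ from (H1), and the absorption of the $2^\ell-1$ versus $2^\ell$ mismatch by $L^p$-contractivity of $L$), so nothing further is needed.
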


\noindent{\bf Proof.}  Since $(f_n\circ T^n)$ is adapted to $(\B_n)$, we can apply Theorem \ref{theo-gen-decreasing} by just checking the condition (\ref{series2bis}) on the higher order details.
 The checking is easy and is a direct consequence of the fact that for $m\ge n$ we have
 $$
      \mathbb{E} (f_n\circ T^n|T^{-m}\B)
      = (L^{m-n}f) \circ T^m.
 $$
 Then, letting $Z_n =a_n f\circ T^n$, it suffices to notice
 $$
    \|\E_{n+2^\ell - 1} Z_n \|_p^{p'} = |a_n|^{p'} \|(L^{2^\ell-1}f_n) \circ T^m\|_p^{p'}
    =|a_n|^{p'} \|L^{2^\ell-1}f_n\|_p^{p'}.
 $$

 $\Box$

 \medskip

When $p=\infty$, the condition $(H2)$ reads $\sum_{i=0}^\infty 2^{\ell} \sup_{n\ge 0} \|L^{2^\ell} f_n\|_\infty <\infty$.
Under this last condition,  the above theorem was proved in \cite{FanETDS}  and the
 exponential integrability of the maximal function was not mentioned in \cite{FanETDS} but it follows
 from the obtained estimates there. If one is only interested in almost everywhere convergence but not in the exponential integrability, Theorem \ref{main} relax the condition
 in \cite{FanETDS} by requiring only $L^p$-integrability, $2\le p\le \infty$ and weakening the exponent of $2^\ell$ in (H2).

 \medskip

 In the study of dynamical systems, the decay of $L^n f$ (measured by $L^\infty$-norm or the H\"{o}lder norm) was extensively studied for regular functions $f$ like H\"{o}lder functions. The above
 theorem shows that for the almost everywhere convergence of the ergodic series, weaker regularity
 would be sufficient and that there is an interest  to study the decay of  $L^n f$ measured by $L^2$-norm. Several situations where such a decay is 
 measured, for unbounded functionals, may be found for instance 
 in \cite{CM-reverse}, section 3.2 or in \cite{F1999}.

 \medskip

 To conclude the section, let us show  that the condition (H2) in  Theorem \ref{main} is sharp.
 We shall use the notation $L_0(x)=1$, $L_1(x)=\max(1,\log x)$ and
 $L_m(x)=L_1\circ \cdot \circ L_1(x)$ (where $L_1$ appears $m$ times).

  Let $T$ be the measure preserving transformation on $([0,1),\B([0,1)),
  \lambda)$ ($\lambda$ being the Lebesgue measure) defines by
  $Tx=2x\mod 1$. Then $\|L^ng\|_1\to 0$ for
  every $g\in L^1([0,1),\B([0,1)), \lambda)$ such that $\int g(x) d x=0$.

 \begin{prop}\label{example-gaposhkin-dyn}
 	Consider the dynamics $([0,1),\B([0,1)),
 	\lambda, T)$ where $\lambda$ is the Lebesque measure and
 	 $Tx=2x\mod 1$.
For every  $m\in \N$, there exist  $(a_n)_{n\in \N}
 \in \ell^2$ and $f\in L^p([0,1),\B([0,1)), \lambda)$  for every $1\le p<\infty$ with  $\int f(x) d x=0$,  such that $\|L^{2^n} f\|_2=O( \frac{2^{-n/2}}{L_m(2^n)})$ and that the series $\sum_{n\in \N} a_n f\circ T^n$
 diverges almost everywhere.
 \end{prop}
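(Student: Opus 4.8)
The plan is to realise the counterexample as a lacunary cosine series, using that for the doubling map $Tx=2x\bmod 1$ the transfer operator acts on frequencies by $L\cos(2\pi 2^s\cdot)=\cos(2\pi 2^{s-1}\cdot)$ and, more generally, $L^{2^n}\cos(2\pi 2^s\cdot)=\cos(2\pi 2^{\,s-2^n}\cdot)$ when $s\ge 2^n$ and $=0$ otherwise. First I would fix $f(x)=\sum_{s\ge 0}c_s\cos(2\pi 2^s x)$ with $c_s\asymp 1/(s\,L_m(s))$ for $s\ge 2$. Since $(c_s)\in\ell^2$, this lacunary series defines a function with $f\in L^p$ for every $p<\infty$ and $\int f=0$. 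The transfer–operator identity gives $\|L^{2^n}f\|_2^2=\tfrac12\sum_{s\ge 2^n}c_s^2\asymp 1/(2^{n}L_m(2^n)^2)$, that is the required decay $\|L^{2^n}f\|_2=O\big(2^{-n/2}/L_m(2^n)\big)$. (Note $\sum_n 1/L_m(2^n)=\infty$, so this is exactly the borderline at which condition (H2) of Theorem \ref{main} fails, which is the content of the sharpness claim.)

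Next I would choose the coefficients $(a_n)$. Writing $f\circ T^n=\sum_s c_s\cos(2\pi 2^{n+s}\cdot)$, the partial sum $S_N=\sum_{n=0}^N a_n\,f\circ T^n$ is again a lacunary cosine series (frequencies $2^t$, ratio $2$), and the coefficient of $2^t$ in $S_N$ equals the convolution value $(a*c)_t$ for every $t\le N$. The key algebraic point is that although $(a_n)\in\ell^2$ and $(c_s)\in\ell^2$, one can arrange $a*c\notin\ell^2$: since $\sum_s c_s=\infty$ the Fourier multiplier $\widehat c(\theta)=\sum_s c_se^{is\theta}$ is unbounded near $\theta=0$, so the convolution operator $a\mapsto a*c$ is unbounded on $\ell^2$ and some $(a_n)\in\ell^2$ has $a*c\notin\ell^2$. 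For this choice, $\|S_N\|_2^2\ge\sum_{t\le N}|(a*c)_t|^2\to\infty$.

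The main obstacle is passing from $\|S_N\|_2\to\infty$ to genuine almost everywhere divergence; the naive comparison of $S_N$ with the partial sums of the limit series $\sum_t(a*c)_t\cos(2\pi 2^t\cdot)$ fails, because the remainder does not tend to $0$ in $L^2$ (its norm itself grows when $m\ge 2$). I would instead argue by a Paley–Zygmund argument in the spirit of the proof of Theorem \ref{NSC}. Since each $S_N$ is a lacunary polynomial, Zygmund's inequality gives $\|S_N\|_4\le C\|S_N\|_2$ with $C$ absolute, so Paley–Zygmund applied to $Z=|S_N|^2$ (with $q=2$) yields a constant $\delta>0$, independent of $N$, with $\P\big(|S_N|\ge \tfrac12\|S_N\|_2\big)\ge\delta$. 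As $\|S_N\|_2\to\infty$, for every $R$ we get $\P(|S_N|\ge R)\ge\delta$ for all large $N$, whence by the reverse Fatou lemma $\P\big(\limsup_N|S_N|\ge R\big)\ge\delta$; letting $R\to\infty$ gives $\P\big(\limsup_N|S_N|=\infty\big)\ge\delta>0$, so the series diverges on a set of positive measure.

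Finally I would upgrade positive measure to full measure by a zero–one law. The event that $\sum_n a_nf(T^nx)$ converges is unchanged by deleting the first $M+1$ terms, and the tail $\sum_{n>M}a_n\,f\circ T^n$ is $T^{-(M+1)}\B$-measurable; hence the convergence event lies in $\bigcap_M T^{-M}\B$, which is trivial since the doubling map is exact (equivalently, the binary digits are i.i.d.\ and Kolmogorov's $0$--$1$ law applies). Therefore the divergence set, having positive measure, has full measure, which proves the proposition. The only genuine computations left are the tail asymptotics $\sum_{s\ge N}c_s^2\asymp 1/(N L_m(N)^2)$ and the unboundedness of $\widehat c$, both of which are routine.
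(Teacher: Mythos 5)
Your proposal is correct, and it takes a genuinely different route from the paper. The paper disposes of Proposition \ref{example-gaposhkin-dyn} in two lines: it invokes Proposition \ref{example-gaposhkin} (Gaposhkin's counterexample for dilated series, whose divergence proof is itself deferred to \cite{Gaposhkin67}) together with the identity $\|L^nf\|_2\asymp\omega_2(f,2^{-n})$ for the doubling map, so the actual divergence argument never appears in the paper. You instead compute $\|L^{2^n}f\|_2$ directly from the action of $L$ on the frequencies $2^s$ and give a self-contained divergence proof. Your key observation is that $\sum_s c_s=\infty$ makes the multiplier $\widehat c$ unbounded near $0$, so some $a\in\ell^2$ has $a*c\notin\ell^2$ and hence $\|S_N\|_2^2\ge\tfrac12\sum_{t\le N}(a*c)_t^2\to\infty$ even though $(a_n)\in\ell^2$; Zygmund's $L^4$--$L^2$ inequality for lacunary polynomials plus Paley--Zygmund then gives divergence on a set of measure at least some fixed $\delta>0$, and exactness of the doubling map (triviality of $\bigcap_n T^{-n}\B$) upgrades this to full measure. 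This last step is worth highlighting: the paper's own Theorem \ref{NSC} uses the same Paley--Zygmund device but explicitly remarks that it does not know how to pass from ``not a.e.\ convergent'' to ``a.e.\ divergent''; your zero--one law argument shows that in this concrete dynamical setting the upgrade is free. Two points deserve an explicit line in a final write-up: (i) the passage from unboundedness of the convolution operator on finitely supported sequences to the existence of a single $a\in\ell^2$ with $a*c\notin\ell^2$ (a closed-graph or condensation-of-singularities argument, using that $(a*c)_t$ is a finite sum so the graph is closed); and (ii) the verification that $\widehat c$ is indeed essentially unbounded, via Abel summation on the tail. Neither is a gap, and the tail asymptotics you defer are routine as claimed. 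In exchange for being longer, your argument makes the divergence mechanism explicit and keeps the whole proposition independent of \cite{Gaposhkin67}.
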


 The proposition follows from Proposition \ref{example-gaposhkin} in the next section and the fact that for $Tx =\{2x\}$ and for $f\in L^2([0,1),\B([0,1)), \lambda)$
 with $\int f(x) d(x) d x=0$, we have
 $\|L^nf\|_2=\omega_2(f, 2^{-n})$
 (Theorem 2 in \cite{F1999}), where $\omega_2$ is the $L^2$-modulus of continuity.

\section{Convergence of dilated series}\label{SectDS}

We want to apply our general results in Section \ref{SectIncreasing} to the study of the following series, called dilated series,
\begin{equation}\label{dilated}
\sum_{k=0}^\infty a_k f(n_kx),
\end{equation}
 where we assume $f\in L^p([0,1],\B,\lambda)$  for some $p>1$
 ($\lambda$ being the Lebesgue measure and $\B$ the Borel $\sigma$-algebra), $(a_k)_{k\in \N}\in\ell^{p'}(\N)$ ($p': =\min (2, p)$), and
$(n_k)_{k\in \N}$ an increasing sequence of positive  integers.

\medskip

There is an extensive literature on the topic for $p=2$, which is our main concern here.
Let us mention the  surveys \cite[(1966)]{Gaposhkin66} by Gaposhkin and
\cite[(2009)]{BW} by Berkes and Weber. For more recent results, one may refer to Weber \cite{Weber}, see also the references therein.

\subsection{Lacunary dilated series}

Before stating our next result we need a definition.

\begin{defn}\label{modulus}
For every $f\in L^p([0,1],\B,\lambda)$, we define its $L^p$-modulus of continuity $\omega_p(\cdot,f)$ by

$$
\omega_p(\delta,f):= \sup_{0\le h\le \delta} \|\tau_h f-f\|_p\qquad
\forall \delta\in [0,1]\,
$$
where $\tau_h f(x) := f(x+h)$.
\end{defn}

\begin{theo}\label{theo-dilated}
Let $f\in L^p([0,1],\B,\lambda)$, $1< p\le \infty$, be such that
\begin{equation}\label{condition*}
\sum_{n\in \N} \frac{\omega_p(2^{-n},f)}{n^{1/p}}<\infty.
\end{equation}
Let $(n_k)_{k\in \N}$ be a Hadamard lacunary sequence of positive integers. Then
   for every $(a_k)_{k\in \N}\in \ell^{p'}(\N)$, the series
$\sum_k a_kf(n_k\cdot)$ converges $\lambda$-a.s. and in $L^p$.
Moreover, if $1<p<\infty$, we have $$
\sup_{N\in \N}|\sum_k ^Na_kf(n_k\cdot)|\in L^p([0,1],\B,\lambda);$$ and if $p=\infty$, there exists $\gamma>0$ such that
\begin{equation}\label{KW}
\E\exp\Big(\gamma \sup_{N\in \N}|\sum_k ^Na_kf(n_k\cdot)|^2\Big)<
\infty.
\end{equation}
\end{theo}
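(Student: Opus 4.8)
The plan is to apply Theorem~\ref{theo-Lp-weight} to the sequences $W_k=f(n_k\cdot)$ and $Z_k=a_kW_k$, the only real issue being the choice of the increasing filtration. The naive dyadic filtration indexed by its level does \emph{not} work here, since a general Hadamard lacunary sequence may grow far faster than $2^k$, so that $f(n_k\cdot)$ cannot be resolved at dyadic level $k$. Instead I would adapt the filtration to $(n_k)$. Set $m_k:=\lceil\log_2 n_k\rceil$ and let $\A_k$ be the $\sigma$-algebra generated by the dyadic intervals of length $2^{-m_k}$. Because $(n_k)$ is lacunary the integers $m_k$ are non-decreasing and tend to $\infty$, so $(\A_k)$ is an increasing filtration with $\A_\infty=\B$; after prepending the trivial $\sigma$-algebra and shifting indices (which affects neither the series nor the summability below) we may assume $\A_0=\{\emptyset,\Omega\}$. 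Since $\E W_k=\int f\,d\lambda=0$, it then suffices to verify that the quantity $\widetilde\Delta_p$ of~\eqref{Lp-cond} is finite; all conclusions (a.e. and $L^p$ convergence for $(a_k)\in\ell^{p'}$, integrability of the maximal function for $1<p<\infty$, and exponential integrability for $p=\infty$) follow at once.

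For the approximation term I would use two elementary facts. First, for any $g\in L^p$ Jensen's inequality gives $\|g-\E^Mg\|_p\le\omega_p(2^{-M},g)$, where $\E^M$ averages over dyadic intervals of length $2^{-M}$. Second, the dilation identity $\omega_p(\delta,f(n\cdot))=\omega_p(n\delta,f)$ holds because $x\mapsto nx\bmod 1$ preserves $\lambda$ and $f$ is $1$-periodic. Combining these at filtration level $m_{k+\ell-1}$, and using $2^{-m_{k+\ell-1}}\le 1/n_{k+\ell-1}$ together with the lacunarity bound $n_{k+\ell-1}/n_k\ge q^{\ell-1}$, I obtain
$$\|W_k-\E^{k+\ell-1}W_k\|_p\le\omega_p\big(n_k2^{-m_{k+\ell-1}},f\big)\le\omega_p\big(q^{-(\ell-1)},f\big)$$
uniformly in $k$, by monotonicity of $\omega_p$. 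Hence $\sup_k\|W_k-\E^{k+\ell-1}W_k\|_p\le\omega_p(q^{-(\ell-1)},f)$.

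For the coarse-averaging term the key point is that $\E^{m+1}$ averages $f(n_{m+\ell}\cdot)$ over dyadic intervals much longer than the period $1/n_{m+\ell}$, so whole periods integrate to zero. Writing $F(t)=\int_0^tf$, which is $1$-periodic since $\int f=0$, a direct computation on a dyadic interval of length $2^{-M}$ gives the pointwise bound $|\E^Mf(n\cdot)|\le \|f\|_1/(n2^{-M})$. Taking $M=m_{m+1}$ and $n=n_{m+\ell}$ and using $n_{m+\ell}2^{-m_{m+1}}\ge q^{\ell-1}/2$, I get
$$\sup_m\|\E^{m+1}W_{m+\ell}\|_p\le\sup_m\|\E^{m+1}W_{m+\ell}\|_\infty\le 2\|f\|_1\,q^{-(\ell-1)},$$
which decays geometrically and is harmless. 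This bound holds for every $p$ and needs only $f\in L^1$, so the difficulty of an $L^2$ Fourier computation never arises.

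Feeding the two estimates into~\eqref{Lp-cond} yields $\widetilde\Delta_p\lesssim \sum_{\ell\ge0}(\ell+1)^{-1/p}\,\omega_p(q^{-(\ell-1)},f)+\sum_{\ell\ge0}(\ell+1)^{-1/p}q^{-(\ell-1)}$; the second sum converges trivially, and the first is finite precisely when $\sum_n n^{-1/p}\omega_p(2^{-n},f)<\infty$, i.e. under~\eqref{condition*}. This equivalence, obtained by comparing $\omega_p(q^{-\ell},f)=\omega_p(2^{-\ell\log_2 q},f)$ with $\omega_p(2^{-n},f)$, is a routine consequence of the monotonicity of $\omega_p$ and the regular variation of $n^{-1/p}$ (the same bookkeeping as in the Cauchy condensation principle). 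I expect the main conceptual obstacle to be precisely the first step: recognising that the level-$k$ dyadic filtration is inadequate and that one must index the filtration by $m_k=\lceil\log_2 n_k\rceil$, so that a unit increment of the parameter $\ell$ in~\eqref{Lp-cond} costs at least one full lacunary step in $(n_k)$. Once the filtration is chosen in this way, both key estimates are elementary.
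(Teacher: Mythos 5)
Your proposal is correct and follows essentially the same route as the paper: the paper likewise takes the filtration $\A_k=\F_{m_k}$ with $m_k=[\log_2 n_k]$ and verifies the two conditions of Lemma \ref{lemme} using exactly your two estimates, which appear there as Lemma \ref{lemme-dyadic} (approximation by conditional expectations via the modulus of continuity) and Lemma \ref{contraction} (the bound $\|\E(f(m\cdot)|\F_n)\|_p\le 2^n\|f\|_p/m$). Your detour through Theorem \ref{theo-Lp-weight} is only a cosmetic repackaging of that verification, and your observation that the splitting into $r$ subsequences (one $n_k$ per dyadic block, which the paper performs) is not actually needed is accurate, since the estimates use only $n_{k+\ell}/n_k\ge q^{\ell}$.
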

\noindent {\bf Remarks.} If $f$ is a trigonometric polynomial, the condition \eqref{condition*} holds with $p=\infty$. In this case
Kuelbs-Woyczy\'nski \cite[Corollary 4.1]{KW} proved (\ref{KW})   for all $\gamma>0$.

\medskip

Let us mention some previous results closely related to Theorem
\ref{theo-dilated} with $p=2$. Gaposhkin \cite{Gaposhkin68}
(see also Theorem 2.4.2 in \cite{Gaposhkin66} and the footnote there), proved that if $\label{condition}\sum_{n\in \N} \omega_2(2^{-n},f)
<\infty$,  then the series
$\sum_{k\in \N}a_{k}f(n_k x)$ converges almost everywhere for every $(a_n)_{n\in \N}\in \ell^2$. Here the lacunarity of $(n_k)$ is not needed. But under the assumption of lacunarity,
we gain a factor $\sqrt{n}$ in (\ref{condition*}).  Moreover,
in \cite{Gaposhkin67}, Gaposhkin proved that if $f$ is defined by a lacunary Fourier series with $\omega_2(f,2^{-n})=O(n^{-\gamma})$ for some $\gamma>1/2$, the same conclusion holds, and the condition $\gamma >\frac{1}{2}$
is
sharp by Proposition \ref{example-gaposhkin}, quoted from Gaposhkin
\cite{Gaposhkin67}. Proposition \ref{example-gaposhkin}  also proves the sharpness of (\ref{condition*}) in Theorem \ref{theo-dilated}.

 To conclude our discussion let us mention that our proof makes use of dyadic martingales while the proof of Gaposhkin is based on Lebesgue's
differentiation theorem. It is well-known that these two objects are
linked (see for instance \cite{JKRW}). In our context such a link
appears in the proof of Lemma \ref{lemme-dyadic}, see the remark after it.

\smallskip

\begin{prop}{\rm (Gaposhkin} \cite[Theorem 3]{Gaposhkin67}\rm{)}\label{example-gaposhkin}
For every $m\in \N$, there exists $f$ which is in $L^p([0,1],\B,\lambda)$
for every  $1\le p<\infty$, such that $\omega_2(f,2^{-n})= O(\frac1{\sqrt n L_m(n)})$ and
a sequence $(a_n)_{n\in \N}\in \ell^2$ such that the series
$\sum_{n\in \N}a_nf(2^nx)$ diverges almost everywhere.
\end{prop}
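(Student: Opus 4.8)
The plan is to exhibit an explicit \emph{Rademacher} model, for which the modulus estimate is transparent and the dilation series linearises exactly. Let $(r_k)_{k\ge 0}$ be the Rademacher functions on $([0,1),\lambda)$, normalised so that $r_k(x)=(-1)^{x_{k+1}}$ for $x=0.x_1x_2\cdots$ in base $2$; then $r_k(2^jx)=r_{k+j}(x)$, distinct $r_k$ are independent, and their trigonometric spectra are pairwise disjoint. I set
\[
f=\sum_{k\ge k_0}c_k\,r_k,\qquad c_k=\frac1{k\,L_m(k)},
\]
with $k_0$ large enough that $L_m$ is defined and positive. Since $\sum_kc_k^2<\infty$, Khintchine's inequality gives $f\in\bigcap_{1\le p<\infty}L^p$, and $\int_0^1 f=0$. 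Because the spectra are disjoint and $\sup_{0\le h\le\delta}\|\tau_hr_k-r_k\|_2^2\asymp\min(2^k\delta,1)$, one gets
\[
\omega_2(f,2^{-n})^2\asymp\sum_kc_k^2\min(2^{k-n},1)\asymp\sum_{k\ge n}c_k^2\asymp\frac1{n\,L_m(n)^2},
\]
so $\omega_2(f,2^{-n})=O\!\big(1/(\sqrt n\,L_m(n))\big)$, which is the required bound; this part is routine.

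The advantage of the model is the exact identity
\[
\sum_{j\ge0}a_jf(2^jx)=\sum_{M}C_M\,r_M(x),\qquad C_M=\sum_{j=0}^Ma_j\,c_{M-j}=(a*c)_M,
\]
with partial sums $S_N:=\sum_{j=0}^Na_jf(2^j\cdot)=\sum_MC_M^{(N)}r_M$, where $C_M^{(N)}=\sum_{j=0}^{\min(N,M)}a_jc_{M-j}$ and $C_M^{(N)}=C_M$ once $N\ge M$. I then choose the weights. As $c\notin\ell^1$ (indeed $\sum_kc_k=\sum_k\frac1{kL_m(k)}=\infty$), the sums $P(M):=\sum_{i\le M}c_i$ tend to $\infty$, and for \emph{decreasing} $a$ one has the elementary bound $C_M\ge a_M\sum_{i\le M/2}c_i\gtrsim a_M\,P(M)$. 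Hence it suffices to pick a decreasing $a\in\ell^2$ with $\sum_Ma_M^2P(M)^2=\infty$, which is possible precisely because $P(M)\to\infty$; an admissible explicit choice is $a_M^2\asymp L_m(M)^2/\big(M\,L_1(M)^3\big)$ (for $m\ge2$, with analogous closed forms for $m=0,1$). For such $a$ we get $\sum_ja_j^2<\infty$ but $\sum_MC_M^2=\infty$.

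The delicate step, which I expect to be the real obstacle, is to deduce from $\sum_MC_M^2=\infty$ the almost everywhere divergence of the partial sums $S_N$ \emph{ordered by the dilation index $j$}. This is not automatic, since $S_N$ is not the $N$-th partial sum of $\sum_MC_Mr_M$: every dilate $f(2^j\cdot)$ feeds all high frequencies, so the remainder $S_N-\sum_{M\le N}C_Mr_M$ does not vanish. I would argue as follows. Changing $r_0,\dots,r_{K}$ alters $S_N$ for $N\ge K$ only by the fixed function $\sum_{M\le K}C_M(r'_M-r_M)$, so $\{x:(S_N(x))_N\text{ converges}\}$ is a tail event and, by Kolmogorov's $0$--$1$ law, has measure $0$ or $1$; it thus suffices to show divergence on a set of positive measure. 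Choose $N_1<N_1'\le N_2<N_2'\le\cdots$ with the blocks $(N_i,N_i']$ disjoint and $v_i:=\sum_{N_i<M\le N_i'}(C_M-C_M^{(N_i)})^2\ge1$, which is possible since for fixed $N_i$ the monotone bound gives $\sum_{M>2N_i}(C_M-C_M^{(N_i)})^2\gtrsim\sum_{M>2N_i}a_M^2P(M)^2=\infty$. Writing $S_{N_i'}-S_{N_i}=P_i+Q_i$ with
\[
P_i=\sum_{N_i<M\le N_i'}\big(C_M-C_M^{(N_i)}\big)r_M,\qquad Q_i=\sum_{M>N_i'}\big(C_M^{(N_i')}-C_M^{(N_i)}\big)r_M,
\]
the two pieces use disjoint families of Rademacher functions, so $P_i,Q_i$ are independent and $Q_i$ is symmetric. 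Paley--Zygmund together with the Khintchine $L^4$--$L^2$ bound yields $\lambda(|P_i|\ge\delta)\ge\rho$ for absolute constants $\delta,\rho>0$ (as $\E P_i^2=v_i\ge1$), and Lévy's symmetrisation inequality gives $\lambda(|S_{N_i'}-S_{N_i}|\ge\delta)\ge\tfrac12\rho$. The elementary inequality $\lambda(\limsup_iA_i)\ge\limsup_i\lambda(A_i)$ then shows $\lambda\big(|S_{N_i'}-S_{N_i}|\ge\delta\ \text{i.o.}\big)\ge\tfrac12\rho>0$, so $(S_N)$ is not Cauchy on a set of positive measure and, by the $0$--$1$ law, diverges almost everywhere. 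The same $f$ and $(a_n)$ serve Proposition~\ref{example-gaposhkin-dyn} via $\|L^nf\|_2=\omega_2(f,2^{-n})$; a genuinely lacunary cosine $f$ could replace the Rademacher one, at the cost of substituting the quasi-independence of lacunary systems for the exact independence used above.
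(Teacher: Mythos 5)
Your proof is correct, but it takes a genuinely different route from the paper's. The paper's own proof is a two-line reduction: it exhibits the lacunary sine series $f(x)=\sum_{k\ge 1}\sin(2^k\cdot 2\pi x)/\bigl(k\prod_{i=0}^mL_i(k)\bigr)$ together with explicit weights $a_n=\bigl(\sqrt{n\prod_{i=0}^{m-1}L_i(n)}\,L_m(n)\bigr)^{-1}$, observes that $f\in L^p$ for all $p<\infty$ by Zygmund's theorem on lacunary series, and then defers the entire divergence argument to Gaposhkin's original paper. You replace the lacunary trigonometric system by the Rademacher system, which enjoys the same exact dilation identity $r_k(2^jx)=r_{k+j}(x)$ and hence the same convolution linearisation $\sum_ja_jf(2^j\cdot)=\sum_MC_Mr_M$ with $C_M=(a*c)_M$, but with genuine independence and disjoint spectra in place of the quasi-independence of lacunary frequencies. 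This buys you a complete, self-contained divergence proof: the $0$--$1$ law reduces a.e.\ divergence to divergence on a set of positive measure, and the block decomposition $S_{N_i'}-S_{N_i}=P_i+Q_i$ with Paley--Zygmund/Khintchine on $P_i$ and L\'evy symmetrisation to absorb $Q_i$ handles the genuinely delicate point --- that the partial sums ordered by the dilation index $j$ are not the partial sums of $\sum_MC_Mr_M$ --- which the paper leaves entirely to the citation of Gaposhkin. Your supporting computations (the bound $\omega_2(f,2^{-n})^2\lesssim\sum_kc_k^2\min(2^{k-n},1)\lesssim n^{-1}L_m(n)^{-2}$ via disjointness of the spectra of the $\tau_hr_k-r_k$; the lower bound $C_M-C_M^{(N_i)}\gtrsim a_MP(M)$ for $M>2N_i$ and decreasing $a$; the admissibility of $a_M^2\asymp L_m(M)^2/(ML_1(M)^3)$) all check out, and your $f$ serves Proposition~\ref{example-gaposhkin-dyn} equally well since $\|L^nf\|_2\asymp\omega_2(f,2^{-n})$ for the doubling map. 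The trade-off is simply length and self-containedness versus the paper's economy of citing the known example.
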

\noindent{\bf Proof. } Fix $m\in \N$.
Define
$$f(x):= \sum_{k= 1}^\infty \frac{\sin(2^k \cdot 2\pi x)}{k\prod_{i=0}^m
L_i(k)}; \quad a_n:= \frac1{\sqrt{n \prod_{i=0}^{m-1}
L_i(n)}L_m(n)}  \ (\forall n\ge 1).
$$
 Since $f$ is lacunary and in $L^2$, it is in every $L^p$
by Theorem 8.20, Chap V, vol I (page 215) of \cite{Zygmund}.
Then, the rest of the proof is exactly as in Gaposhkin
\cite{Gaposhkin67}. \hfill $\square$

\medskip

Now we show that the condition $(a_n)_{n\in \N}\in \ell^2$ in Theorem \ref{theo-dilated}  may be necessary.

\begin{theo}\label{theo-dilated2}
Let $f\in L^p([0,1],\B,\lambda)$, $2< p\le \infty$, satisfying \eqref{condition*}. Let $(n_k)_{k\in \N}$
  be a Hadamard-lacunary sequence of positive integers. Suppose  that
  $(f(n_k\cdot ))_{k\in \N}$ is a Riesz system. Then, for every sequence
  $(a_n)_{n\in \N}$, with $\sum_{n\in \N}|a_n|^2=\infty$, the series
  $\sum_{k\in \N}a_k f(n_k\cdot )$ is not $a.e.$ convergent.
\end{theo}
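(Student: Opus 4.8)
Under the stated hypotheses—$f \in L^p$ with $2 < p \le \infty$ satisfying \eqref{condition*}, $(n_k)$ Hadamard lacunary, and $(f(n_k\cdot))_k$ a Riesz system—the dilated series $\sum_k a_k f(n_k\cdot)$ fails to converge $\lambda$-a.e. whenever $\sum_k |a_k|^2 = \infty$.

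Let me think about how to prove this.

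The structure is clear: this is the dilated-series analogue of Theorem \ref{NSC}, exactly as Theorem \ref{theo-dilated} is the analogue of Theorem \ref{theo-Lp-weight}. So the plan is to reduce Theorem \ref{theo-dilated2} to Theorem \ref{NSC}, or more directly to re-run the Paley–Zygmund argument from \ref{NSC}'s proof.

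The key question: what is $W_n$ and what is the filtration?

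In Theorem \ref{theo-dilated} (whose proof I haven't seen but can anticipate from the setup), the dilated series $\sum a_k f(n_k x)$ is handled via an *increasing* filtration — the dyadic filtration on $[0,1]$. The variables are $W_k = f(n_k \cdot)$. The condition \eqref{condition*} with the modulus of continuity is converted into the abstract condition \eqref{Lp-cond} via some Lemma (referenced as "Lemma \ref{lemme-dyadic}"). So $\widetilde\Delta_p < \infty$ holds for this choice of $W_k$ and the dyadic filtration.

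Now Theorem \ref{NSC} requires two things: (i) the abstract condition on the $W_n$ (which is \eqref{Lp-cond} — and this is precisely what Theorem \ref{theo-dilated}'s proof establishes from \eqref{condition*}), and (ii) that $(W_n)$ is a Riesz sequence in $L^2$. Hypothesis of \ref{theo-dilated2} directly assumes (ii). So the hypotheses match up perfectly.

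So the cleanest strategy:

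First, I would note that setting $W_k := f(n_k \cdot)$ and using the dyadic filtration on $([0,1],\B,\lambda)$, hypothesis \eqref{condition*} is exactly the hypothesis that yields the abstract summability condition \eqref{Lp-cond} of Theorem \ref{theo-Lp-weight}/\ref{NSC}. This conversion is the same computation (via the modulus-of-continuity lemma, Lemma \ref{lemme-dyadic}) that underlies the proof of Theorem \ref{theo-dilated}, so I can quote it rather than redo it. Since $2 < p \le \infty$, the condition $\sum_\ell (\ell+1)^{-1/p}(\dots) < \infty$ is satisfied.

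Second, with condition \eqref{Lp-cond} verified for these $W_k$, and with $(f(n_k\cdot))_k$ assumed to be a Riesz sequence, all hypotheses of Theorem \ref{NSC} are met (with $2 < p \le \infty$). Therefore Theorem \ref{NSC} applies verbatim and gives the non-convergence conclusion: for any $(a_k)$ with $\sum |a_k|^2 = \infty$, the series $\sum_k a_k f(n_k\cdot)$ does not converge a.e.

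If I want to be more self-contained and not merely cite \ref{NSC}, I would reproduce its Paley–Zygmund argument in this concrete setting. Set $Z_N := \sup_{0 \le n \le N} |\sum_{k=0}^n a_k f(n_k \cdot)|^2$ and apply the Paley–Zygmund inequality with $q = p/2 > 1$. Theorem \ref{theo-dilated} (with its maximal $L^p$ bound) gives $\|Z_N\|_q \le C \sum_{k=0}^N |a_k|^2$, while the Riesz-sequence lower bound gives $\E Z_N \ge \E|\sum_{k=0}^N a_k f(n_k\cdot)|^2 \ge D \sum_{k=0}^N |a_k|^2$. Hence $\P(Z_N \ge \lambda D \sum_{k\le N}|a_k|^2) \ge ((1-\lambda)D/C)^{q/(q-1)}$, a constant bounded away from $0$. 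Since $\sum |a_k|^2 = \infty$, the partial sums $\sum_{k\le n} a_k f(n_k x)$ are unbounded on a set of positive measure uniformly in $N$, which rules out a.e. convergence.

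The genuinely new content here is really just the verification that \eqref{condition*} delivers condition \eqref{Lp-cond} for $W_k = f(n_k\cdot)$ in the dyadic filtration; everything else is a transcription of Theorem \ref{NSC}. I expect the main obstacle—and the only place demanding real care—to be this conversion, specifically controlling the two quantities $\sup_k \|W_k - \E^{k+\ell-1} W_k\|_p$ and $\sup_m \|\E^{m+1} W_{m+\ell}\|_p$ by the modulus of continuity $\omega_p(2^{-\ell}, f)$ uniformly over the lacunary frequencies $n_k$. The lacunarity of $(n_k)$ is essential precisely here: it guarantees that dilating by distinct $n_k$ spreads the relevant dyadic scales apart so that the single function $f$'s modulus of continuity controls all the conditional expectations simultaneously, with constants independent of $k$. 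Since this estimate is already needed (and presumably proved via Lemma \ref{lemme-dyadic}) in the proof of Theorem \ref{theo-dilated}, I would simply invoke it.
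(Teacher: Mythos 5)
Your proposal is correct and is essentially the argument the paper intends: the subsection ``Proofs of Theorem \ref{theo-dilated} and Theorem \ref{theo-dilated2}'' only writes out the verification that \eqref{condition*} yields the required martingale estimates for $W_k=f(n_k\cdot)$ with the dyadic filtration $\A_k=\F_{m_k}$ (via Lemmas \ref{lemme-dyadic} and \ref{contraction}), and the non-convergence statement is then exactly the Paley--Zygmund argument of Theorem \ref{NSC}, combining the maximal $L^p$ bound from Theorem \ref{theo-dilated} with the Riesz-system lower bound. You have correctly identified both the reduction and the one place where real work is needed, so nothing is missing.
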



\subsection{Proofs of Theorem \ref{theo-dilated} and Theorem \ref{theo-dilated2}}

As for several results on the topic (see e.g. Theorem 2.1 of \cite{BW}),
dyadic martingales were used. We also use the dyadic filtration.

\medskip

 For every $n\in \N$, denote by $\F_n$ the $\sigma$-algebra generated the family $\mathfrak{I}_n$ of
the intervals $I_{n, k}:=[\frac{k}{2^n},\frac{k+1}{2^n}]$, $0\le k\le 2^n-1$. We will choose our filtration
$(\A_k)$ to be $(\F_{m_k})$ for some suitable increasing sequence of integers $(m_k)$.


For every increasing sequence $(m_k)_{k\in \N}$, we do have $\bigvee_{k\in \N}
\F_{m_k}=\B$.  For every $f\in L^p([0,1],\B,\lambda)$, $p\ge 1$,
$(\E(f|\F_{m_k})_{k\in \N}$ converges  to $f$ $\lambda$-a.s. and in
$L^p([0,1],\B,\lambda)$.

\smallskip

In the next lemma, which would be known to specialists, we control the rate
of approximation  of $f\in L^p$ by $\E(f |\F_n)$, by using the $L^p$-modulus of continuity
of $f$.

\smallskip We convention that our functions are periodically extended to the whole line
$\mathbb{R}$.

\begin{lem}\label{lemme-dyadic}
Let $f\in L^p([0, 1], \B, \lambda)$, $1\le p\le \infty$, and $n\in \N$. We have
\begin{equation}\label{control-dyadic}
\|f-\E(f|\F_n)\|_p\le 2 \Omega_p(2^{-n},f)\, .
\end{equation}
\end{lem}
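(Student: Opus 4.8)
The plan is to exploit the fact that for the dyadic filtration the conditional expectation $\E(f|\F_n)$ is nothing but local averaging: on each dyadic interval $I_{n,k}=[k2^{-n},(k+1)2^{-n}]$ it equals the constant $2^n\int_{I_{n,k}}f$. Writing $k(x)$ for the index with $x\in I_{n,k(x)}$, this lets me represent the approximation error as an average of increments of $f$,
\begin{equation*}
f(x)-\E(f|\F_n)(x)=2^n\int_{I_{n,k(x)}}\big(f(x)-f(t)\big)\,dt
=2^n\int_{-2^{-n}}^{2^{-n}}\mathbf{1}_{I_{n,k(x)}}(x+h)\big(f(x)-f(x+h)\big)\,dh,
\end{equation*}
where in the last step I substitute $t=x+h$ and use that $t,x\in I_{n,k(x)}$ forces $|h|\le 2^{-n}$.

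Next I would apply Minkowski's integral inequality (valid for all $1\le p\le\infty$) to pull the $L^p$-norm inside the $h$-integral:
\begin{equation*}
\|f-\E(f|\F_n)\|_p\le 2^n\int_{-2^{-n}}^{2^{-n}}
\big\|\mathbf{1}_{I_{n,k(\cdot)}}(\cdot+h)\big(f(\cdot)-\tau_h f(\cdot)\big)\big\|_p\,dh
\le 2^n\int_{-2^{-n}}^{2^{-n}}\|f-\tau_h f\|_p\,dh,
\end{equation*}
discarding the indicator, which is bounded by $1$ and hence only decreases the norm. It then remains to control each translate by the modulus. Since our functions are periodically extended to $\R$, the map $y\mapsto y+h$ is a measure-preserving change of variable on $\R/\Z$, so $\|f-\tau_h f\|_p=\|f-\tau_{|h|}f\|_p\le\omega_p(|h|,f)\le\omega_p(2^{-n},f)$ for $|h|\le 2^{-n}$, using the monotonicity of the modulus in its first argument. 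Plugging this bound in yields
\begin{equation*}
\|f-\E(f|\F_n)\|_p\le 2^n\cdot 2\cdot 2^{-n}\,\omega_p(2^{-n},f)=2\,\omega_p(2^{-n},f),
\end{equation*}
which is the asserted inequality.

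The only genuinely delicate point is that the range of $h$ in the first display depends on $x$ through $k(x)$; I handle this by integrating over the fixed symmetric interval $[-2^{-n},2^{-n}]$ and absorbing the $x$-dependence into the indicator $\mathbf{1}_{I_{n,k(x)}}(x+h)$, which is then dropped at no cost before applying Minkowski. A secondary point is the reduction from signed shifts $h$ to the nonnegative shifts appearing in the definition of $\omega_p$, which follows from the translation invariance of the $L^p$-norm on the circle; the periodic convention stated just above the lemma is precisely what makes this step legitimate, and the same argument goes through verbatim in the endpoint case $p=\infty$.
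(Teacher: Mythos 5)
Your proof is correct, and it reaches the paper's bound $2\,\omega_p(2^{-n},f)$ by a slightly different route. The paper also starts from the local-averaging representation of $\E(f|\F_n)$, but it then splits the error into two pieces through the one-sided sliding average $2^n\int_x^{x+2^{-n}}f(u)\,du$: namely $\varphi_n(x)=f(x)-2^n\int_x^{x+2^{-n}}f$ and $\psi_n(x)=2^n\int_x^{x+2^{-n}}f-m_{I_{n,k}}(f)$, bounds each piece by $\omega_p(2^{-n},f)$ via H\"older/Jensen, and gets the factor $2$ from the triangle inequality. You instead keep a single term, writing the error as an average of increments over \emph{signed} shifts $|h|\le 2^{-n}$, and apply Minkowski's integral inequality once; your factor $2$ comes from the length of the interval $[-2^{-n},2^{-n}]$. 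The two arguments are essentially equivalent in content, and your version is arguably more streamlined; what the paper's splitting buys is that it makes explicit the operator $x\mapsto 2^n\int_x^{x+2^{-n}}f(u)\,du$, which is the point of the remark following the lemma about the link with Lebesgue's differentiation theorem. Your handling of the delicate points (the $x$-dependent range of $h$ via the indicator, measurability for Minkowski, and the reduction of negative shifts to positive ones by translation invariance of the norm on the circle under the periodic convention) is sound, and the argument does go through verbatim at $p=\infty$.
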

\noindent {\bf Remark. } When, $p=2$, the lemma  improves  Lemma 2.1 in \cite{BW}. Notice that in the proof we make use of $\sum_{k=0}^{2^n-1}  m_{I_{n, k}} (\tau_{x-2^{-n}k} f) {\bf 1}_{I_{n,k}}(x)
={2^n}\int_x^{x+\frac1{2^n}}f(u)du$, which is related to Lebesgue's differentiation theorem.

\noindent {\bf Proof.} We give the proof when $p<\infty$,
the case $p=\infty$ being obvious. For simplicity, for any interval $I$ we write
$$
   m_I(f) = \frac{1}{|I|}\int_I f(x) d x
$$
where $|I|$ denotes the length of $I$.
Let $n\in \N$. We have, for $\lambda$-a.e. $x\in [0,1]$,
$$
\E(f|\F_n)(x)= \sum_{k=0}^{2^n-1} m_{I_{n, k}}(f) {\bf 1}_{I_{n, k}}
(x)\, .
$$
Since $f(x)=\sum_{k=0}^{2^n-1} f(x){\bf 1}_{I_{n, k}}(x)$, we can  write
\begin{align*}
f(x)-\E(f|\F_n)(x)= \varphi_n(x)+\psi_n(x)
\end{align*}
where
\begin{align*}
\varphi_n(x):=\sum_{k=0}^{2^n-1} [f(x) - m_{I_{n, k}} (\tau_{x-2^{-n}k} f)] {\bf 1}_{I_{n,k}}(x)
\\
\psi_n(x):= \sum_{k=0}^{2^n-1} [ m_{I_{n, k}} (\tau_{x-2^{-n}k} f - m_{I_{n, k}
} ( f))] {\bf 1}_{I_{n,k}}(x).
\end{align*}
Using that
$\frac{f(x)}{2^n}=f(x) \int_{\frac{k}{2^n}}
^{\frac{k+1}{2^n}}du$ and using the H\"older inequality in the following integral  with respect to $du$, we get that
\begin{eqnarray*}
\|\varphi_n\|_p^p &=& \sum_{k=0}^{2^n-1} \int_{I_{n, k}} |f(x)- \tau_{x-2^{-n} k} f(u)|^p du\\
&=&
\sum_{k=0}^{2^n-1}
 \int_{\frac{k}{2^n}}
^{\frac{k+1}{2^n}} \Big|  2^n \int_{\frac{k}{2^n}}
^{\frac{k+1}{2^n}} \big({f(x)}-f(u+x-k2^{-n})\big)\, du\Big|^pdx\\
&\le& 2^{n}\sum_{k=0}^{2^n-1}
 \int_{\frac{k}{2^n}}
^{\frac{k+1}{2^n}} \Big[  \int_{\frac{k}{2^n}}
^{\frac{k+1}{2^n}}\big|{f(x)}-f(u+x-k2^{-n})\big|^p\, du\, \Big] dx
\\
&=& 2^{np}\sum_{k=0}^{2^n-1}
 \int_{\frac{k}{2^n}}^{\frac{k+1}{2^n}}
    \Big( 2^{n(1-p)} \int_{0}^{\frac{1}{2^n}}\big|f(x) - f(x+v) \big|^p\, dv\, \Big) dx\\
& =& 2^n \int_0^{2^{-n}}dv  \int_0^1 |f(x) - f(x+v)|^p d x\\
& \le & \Omega^p_p(2^{-n},f).
\end{eqnarray*}
By a similar computation, we infer that
$$
\|\psi\|_p^p\le \Omega^p_p(2^{-n},f)\, ,
$$
and the result follows. \hfill $\square$

\medskip

We will also need the following lemma.

\begin{lem}\label{contraction}
Let $f\in L^p([0,1],\B,\lambda)$, $1\le p\le \infty$, with $\int_0^1f(u)du=0$. For any integers $n \in \N$ and $m\ge 1$,
we have
\begin{equation}\label{control-dilated}
\|\E(f(m\cdot) |\F_n)\|_p\le \frac{2^{n}}{m}\|f\|_p.
\end{equation}
\end{lem}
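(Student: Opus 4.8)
The plan is to compute $\E(f(m\cdot)\mid\F_n)$ explicitly on each dyadic interval and then exploit the vanishing mean of $f$ to cancel the full periods hidden inside those intervals. First I would observe that, for $x\in I_{n,k}$, periodicity of the extended function gives
$\E(f(m\cdot)\mid\F_n)(x)=2^n\int_{I_{n,k}}f(mu)\,du=\frac{2^n}{m}\,\Psi_k$, where $\Psi_k:=\int_{mk/2^n}^{m(k+1)/2^n}f(w)\,dw$ is the integral of $f$ over an interval of length $L:=m/2^n$. Folding this integral back to $[0,1)$ by $1$-periodicity, I would write $\Psi_k=\int_0^1 f(w)\,\nu_k(w)\,dw$, where $\nu_k(w)$ counts the integer translates of $w$ landing in $[mk/2^n,\,m(k+1)/2^n)$; since that interval has length $L$, the count $\nu_k(w)$ takes only the two values $\lfloor L\rfloor$ and $\lfloor L\rfloor+1$.

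Next I would bring in the hypothesis $\int_0^1 f=0$. Writing $\nu_k=\lfloor L\rfloor+\mathbf{1}_{E_k}$ with $E_k:=\{\,w:\nu_k(w)=\lfloor L\rfloor+1\,\}$, the constant part $\lfloor L\rfloor$ integrates against $f$ to zero, so $\Psi_k=\int_{E_k}f$ and $|E_k|=\{L\}$, the fractional part of $L$. The decisive observation, and the step I expect to be the crux, is the \emph{exact} covering identity $\sum_{k=0}^{2^n-1}\mathbf{1}_{E_k}(w)=r$ for every $w\in[0,1)$, where $r:=m-2^n\lfloor L\rfloor=m\bmod 2^n=2^n\{L\}$. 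This holds because, as $k$ runs over $0,\dots,2^n-1$, the intervals $[mk/2^n,\,m(k+1)/2^n)$ partition $[0,m)$, so each translate is counted exactly once and $\sum_k\nu_k(w)=m$ identically; subtracting $2^n\lfloor L\rfloor$ yields the claim.

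Finally I would combine these facts through H\"older's inequality. From $\Psi_k=\int_{E_k}f$ and $|E_k|=\{L\}$ one has $|\Psi_k|^p\le\{L\}^{p-1}\int_{E_k}|f|^p$, and summing over $k$ while using the covering identity $\sum_k\mathbf{1}_{E_k}\equiv r=2^n\{L\}$ gives $\sum_k|\Psi_k|^p\le\{L\}^{p-1}\,r\,\|f\|_p^p=2^n\{L\}^p\|f\|_p^p$. Hence $\|\E(f(m\cdot)\mid\F_n)\|_p^p=2^{-n}(2^n/m)^p\sum_k|\Psi_k|^p\le(2^n/m)^p\{L\}^p\|f\|_p^p\le(2^n/m)^p\|f\|_p^p$, which is the asserted bound (the endpoints $p=1$ and $p=\infty$ are the obvious special cases of the same chain, and $\{L\}=0$ trivially forces the conditional expectation to vanish). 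The main obstacle to anticipate is exactly that the naive estimate — bounding each $\Psi_k$ by H\"older over the full interval of length $L$ without folding — only reproduces the contraction bound $\|f\|_p$ and loses the entire factor $2^n/m$. Recovering that factor forces one both to cancel the $\lfloor L\rfloor$ full periods using $\int f=0$ and to replace the wasteful estimate $\sum_k\int_{E_k}|f|^p\le m\|f\|_p^p$ by the sharp covering multiplicity $r=m\bmod 2^n$; it is this exact (rather than merely averaged) covering count that makes the argument uniform in $m$ and $n$ with no case distinction.
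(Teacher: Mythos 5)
Your argument is correct, and its skeleton coincides with the paper's: both proofs start from the explicit formula $\E(f(m\cdot)\mid\F_n)=\frac{2^n}{m}\Psi_k$ on $I_{n,k}$ with $\Psi_k=\int_{km/2^n}^{(k+1)m/2^n}f$, and both use $\int_0^1 f=0$ to discard the whole periods contained in that interval. Where you diverge is the endgame. The paper simply notes that after cancelling the integer part of the length, each $\Psi_k$ is an integral over an interval of length $\{m/2^n\}\le 1$, hence $|\Psi_k|\le\int_0^1|f|\le\|f\|_p$, and then crudely sums $2^n$ such terms to get $\frac{2^{n(p-1)}}{m^p}\cdot 2^n\|f\|_p^p$. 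You instead fold the integral onto $[0,1)$, identify $\Psi_k=\int_{E_k}f$ with $|E_k|=\{m/2^n\}$, and exploit the exact covering identity $\sum_k\mathbf{1}_{E_k}\equiv m\bmod 2^n$ before applying H\"older on each $E_k$; all of these steps check out (in particular $\sum_k\nu_k(w)=m$ because the $2^n$ intervals partition $[0,m)$). What this buys you is the sharper constant $\frac{m\bmod 2^n}{m}$ in place of $\frac{2^n}{m}$ --- which not only implies \eqref{control-dilated} but is even stronger than the bound $\frac{\sqrt{\ell\,2^n}}{m}\|f\|_2$ (with $\ell=m\bmod 2^n$) stated in the remark following the lemma, since $\ell\le\sqrt{\ell\,2^n}$. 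The price is a longer argument for a statement that, as used later in the proof of Theorem \ref{theo-dilated}, only needs the cruder form; but your version makes the dependence on $m\bmod 2^n$ transparent rather than relegating it to an unproved remark.
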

\noindent {\bf Proof.} Assume that $p<\infty$. Since $\int_0^1f(u)du=0$, the integral of $f$ over any interval of integral length is equal to zero.
So that
for any $a<b$, we have
$$
    \int_a^b f(x) dx = \int_a^{a+[b-a]} f(x) dx + \int_{a+[b-a]}^{a + [b-a]+\{b-a\}} f(x) dx
    = \int_a^{a + \{b-a\}}f(x) dx
$$
where $[x]$ and $\{x\}$ denote respectively the integral part and the fractional part of a real
number $x$.
It follows that
$$
  \Big|\int_{\frac{km}{2^n}}^{\frac{(k+1)m}{2^n}}
f(u) \, du\Big|^p \le \Big(\int_{0}^1
|f(u)| \, du\Big)^p \le \|f\|_p^p.
$$
Hence,
\begin{gather*}
\|\E(f(m\cdot) |\F_n)\|_p^p =\frac{2^{n(p-1)}}{m^p} \sum_{k=0}^{2^n-1}
\Big|\int_{\frac{km}{2^n}}^{\frac{(k+1)m}{2^n}}
f(u) \, du\Big|^p\le \frac{2^{np}}{m^p}\|f\|_p^p\, .
\end{gather*}
The proof when $p=\infty$ follows similarly.
\hfill $\square$

\medskip
\noindent {\bf Remark.} Notice that if $m\equiv \ell\, \, {\rm mod}
\,\,   2^n$, with $0\le \ell \le 2^n-1$, we actually have $\|\E(f(m\cdot) |\F_n)\|_2\le \frac{\sqrt{\ell 2^{n}}}{m}\|f\|_2$. We can replace the norm $\|\cdot \|_2$ by  the norm $\|\cdot \|_1$.

\medskip

\noindent {\bf Proof of Theorem \ref{theo-dilated}.} It is easy to see that there exists an integer
$r\ge 1$, such that in each interval $[2^\ell, 2^{\ell+1}-1]$, there
are at most $r$ terms from the sequence $(n_k)_{k\in \N}$. Splitting our series into $r$ series we may and do assume that $r=1$.

\smallskip

For every $k\in \N$, define $m_k:=[\log_2 n_k]$, in other words
$2^{m_k}\le n_k <2^{m_k +1}$. The sequence  $(m_k)_{k\in \N}$ is strictly increasing and tends to the infinity. We are going to  apply Theorem
\ref{theo-gen} to
$$Z_k=a_k f(n_k\cdot), \quad \A_k=\F_{m_k}.
$$ By
Lemma \ref{lemme}, it suffices to check that
\begin{equation}\label{series1ter}
\sum_{\ell=0}^\infty 2^{\ell(1-1/p)}\Big( \sum_{k=0}^\infty a_k^{p'}
\|f(n_k\cdot) -\E(f(n_k\cdot |\F_{m_{k+2^l}})\|_p^{p'}\Big)^{1/p'}<\infty\, ,
\end{equation}
and that
\begin{equation}\label{series2ter}
\sum_{\ell=0}^\infty 2^{\ell(1-1/p)}\Big( \sum_{k=0}^\infty
a_k^{p'} \|\E(f_{n_k}\cdot )|\F_{m_{k-2^\ell}})\|_p^{p'}\Big)^{1/p'}<\infty\, .
\end{equation}
Using \eqref{control-dyadic}, we see that \eqref{series1ter} holds
as soon as
$$
\sum_{\ell=0}^\infty 2^{\ell(1-1/p)}\Big( \sum_{k=0}^\infty a_k^{p'}
\omega_p^{p'}(\frac{n_k}{2^{m_{k+2^\ell}}},f)\Big)^{1/p'}<\infty\,.
$$
Notice that as function of $\delta$, $\omega_2(\delta,f)$ is non-decreasing. Then
\eqref{series1ter} holds  as soon as
$$
\sum_{\ell=0}^\infty 2^{\ell(1-1/p)}\Big(
\sup_{k\in \N}\omega_p^{p'}(\frac{2n_k}{n_{k+2^\ell}},f)\Big)^{1/p'}
\le \sum_{\ell=0}^\infty 2^{\ell(1-1/p)}\Big(
\omega_p^{p'}(\frac{2}{q^{2^\ell}},f)\Big)^{1/p'}<\infty\,,
$$
which is equivalent to \eqref{condition*}, using once more the monotony of $\omega_p(\cdot, f)$.

It remains to prove \eqref{series2ter}. Using \eqref{control-dilated},
we see that \eqref{series2ter} holds as soon as

\begin{equation*}
\sum_{\ell=0}^\infty 2^{\ell(1-1/p)}\Big( \sum_{k=2^\ell}^\infty
a_k^{p'}\frac{2^{p'm_{k+1-2^\ell}}}{n_k^{p'}}\Big)^{1/p'}<\infty\, .
\end{equation*}
To conclude, just notice that
$$
\frac{2^{p'm_{k+1-2^\ell}}}{n_k^{p'}}\le 2^{p'} \frac{n_{k+1-2^\ell}^{p'}}{n_k^{p'}}
\le \frac{2^{p'}}{q^{p'2^\ell}}\, .
$$
\hfill $\square$

\subsection{Lacunary Davenport series}

As an example we apply Theorem \ref{theo-dilated} and Theorem \ref{theo-dilated2} to Davenport series.
For $\lambda>0$ï¼?we consider the function
$$
f_\lambda(x)= \sum_{m\ge 1}\frac{\sin(2 \pi mx)}{m^\lambda}\qquad
\forall x\in [0,1]\, .
$$
It is everywhere defined and continuous except at $x=0$ and belongs to $L^2([0, 1])$. It is H\"{o}lder continuous when $\lambda >1$.
When $\lambda=1$, we have $f_1(x)=-\{x\}$ (where $\{x\}= x -[x]-\frac{1}{2}$), hence $\omega(\delta , f_1)=O(\delta^{1/p})$, pour tout
$p>1$.

\smallskip

The study of the regularity of $f_\lambda$ for $0<\lambda <1$ needs more care (notice that when $1/2\le \lambda <1$ one may use the Hausdorf-Young theorem).

\smallskip

Let $0<\lambda <1$. By formula (2.3) p. 186 and formula
(1.18) p. 77 of Zygmund \cite[Vol. I]{Zygmund},
there exists $(c_n)_{n\ge 1}$ with $|c_n|\le C_\lambda n^{-1-\lambda}$
for some $C_\lambda >0$ and $\kappa_\lambda\in \R$, such that for every $x\in [0,1)$,
$$
f_\lambda(x)=\kappa_\lambda {\rm Im} \big((1-{\rm e}^{2i\pi x})^{\lambda-1}
\big)+\sum_{n\ge 1} c_n \sin(2\pi n x):=
g_\lambda(x)+h_\lambda(x)\, .
$$
It is not difficult to see that $h_\lambda$ is H\"older continuous
and that
\begin{equation}
     \omega_p(\delta, g_\lambda) =O(\delta^{\lambda -\frac{p-1}{p}})\, ,
\end{equation}
for every $p<(1-\lambda)^{-1}$.

\smallskip

These regularity properties allow us to apply Theorem \ref{theo-dilated}.
We shall see right now that when $\lambda>\frac{1}{2}$, it is also possible to apply
Theorem \ref{theo-dilated2}.
In fact,
for any $\lambda >1/2$, Wintner proved that $\{f_\lambda (n x)\}$
is complete in $L^2([0, 1])$. When $\lambda >1$, Hedenmalm, Lindqvist and Seip proved that   $\{f_\lambda (n x)\}$ is a Riesz basis, a fortiori $\{f_\lambda(n_kx)\}_{n_k\ge 1}$
is a Riesz sequence for any sequence $\{n_k\}$. When $1/2<\lambda
\le 1$, Br\'emont proved that $\{f_\lambda(n_kx)\}_{n_k\ge 1}$
 is a Riesz sequence for any Hadamard lacunary sequence $\{n_k\}$.

\begin{theo} \label{Thm-davenport}
Let $\lambda >1/2$. Suppose that $\{n_k\}\subset \mathbb{N}$ is lacunary in the sense of Hadamard.
	Then for every sequence $(a_k)_{k\in \N}$ the following are equivalent
	\begin{itemize}
	\item [$(i)$] The  series $\sum_{k=1}^\infty a_k f_\lambda(n_k x)$
	converges almost everywhere;
	\item [$(ii)$]  $\sum_{k=1}^\infty |a_k|^2 <\infty$.
	\end{itemize}
Moreover, if any of the above holds then, when $1/2<\lambda \le 1$, for every $p\le \frac{1}{1-\lambda}$, $\sup_{n\in \N} |	
\sum_{k=1}^N a_k f_\lambda(n_k x)|\in L^p([0,1))$; and; when $\lambda>1$, there exists $\beta>0$ such that $\int_0^1 {\rm e}^{\beta\sup_{n\in \N} |	
\sum_{k=1}^N a_k f_\lambda(n_k x)|^2}\, dx<\infty$.
\end{theo}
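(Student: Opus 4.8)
The plan is to derive both implications from the two dilated-series results already in hand, namely the sufficiency Theorem \ref{theo-dilated} and the necessity Theorem \ref{theo-dilated2}, applied to $f=f_\lambda$ for a single well-chosen exponent $p$. The only inputs beyond those theorems are the regularity of $f_\lambda$, measured through its $L^p$-modulus of continuity $\omega_p(\cdot,f_\lambda)$, and the Riesz-system property of $(f_\lambda(n_k\cdot))_k$; the latter I would take as an external fact, supplied by the cited results of Hedenmalm--Lindqvist--Seip for $\lambda>1$ (where $\{f_\lambda(n\cdot)\}$ is a Riesz basis, so any subfamily is a Riesz sequence) and of Br\'emont for $1/2<\lambda\le1$ (Riesz sequence for Hadamard lacunary $(n_k)$).

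First I would fix the exponent. For $\lambda>1$ the function $f_\lambda$ is H\"older continuous, so $\omega_\infty(\delta,f_\lambda)=O(\delta^{\alpha})$ for some $\alpha>0$ and \eqref{condition*} holds trivially with $p=\infty$. For $1/2<\lambda\le1$ I would use the decomposition $f_\lambda=g_\lambda+h_\lambda$ recorded above, in which $h_\lambda$ is H\"older and $\omega_p(\delta,g_\lambda)=O(\delta^{\lambda-1+1/p})$ for every $p<(1-\lambda)^{-1}$. The crucial arithmetical point is that $(1-\lambda)^{-1}>2$ exactly when $\lambda>1/2$, so the window $\big(2,(1-\lambda)^{-1}\big)$ is nonempty; I would fix $p$ inside it. Then $f_\lambda\in L^p$, the exponent $\lambda-1+1/p$ is strictly positive, and since $\sum_n 2^{-n(\lambda-1+1/p)}n^{-1/p}<\infty$ the condition \eqref{condition*} is satisfied. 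The boundary case $\lambda=1$, where $f_1$ is the bounded sawtooth with $\omega_p(\delta,f_1)=O(\delta^{1/p})$, is handled identically with any finite $p>2$.

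With such a $p$ fixed, the implication $(ii)\Rightarrow(i)$ is immediate from Theorem \ref{theo-dilated}: since $p>2$ we have $p'=\min(2,p)=2$, hence $(a_k)\in\ell^2=\ell^{p'}$, and the theorem yields $\lambda$-a.e.\ (and $L^p$-) convergence of $\sum_k a_kf_\lambda(n_k\cdot)$. The same application delivers the ``moreover'' assertions: for $1/2<\lambda\le1$ the maximal function lies in $L^p$ throughout the admissible range, and for $\lambda>1$ the $p=\infty$ case gives $\int_0^1\exp(\beta\sup_N|\sum_{k=1}^N a_kf_\lambda(n_kx)|^2)\,dx<\infty$ for some $\beta>0$. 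For the converse $(i)\Rightarrow(ii)$ I would argue by contraposition: assuming $\sum_k|a_k|^2=\infty$, Theorem \ref{theo-dilated2}—whose hypotheses ($f_\lambda\in L^p$ with $2<p\le\infty$, condition \eqref{condition*}, and $(f_\lambda(n_k\cdot))_k$ a Riesz system) are precisely what the previous paragraph and the cited Riesz results secure—shows that $\sum_k a_kf_\lambda(n_k\cdot)$ fails to converge a.e.

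The main obstacle is pinning down one exponent $p$ that simultaneously (a) places $f_\lambda$ in $L^p$, (b) makes $\omega_p(2^{-n},f_\lambda)$ summable against $n^{-1/p}$, and (c) satisfies $p>2$, as required by the necessity theorem. All three constraints become compatible exactly at the threshold $\lambda>1/2$, which is why that hypothesis is sharp here; everything else is bookkeeping, \emph{provided} one accepts the Riesz-system property as the deep external input coming from the work of Br\'emont and of Hedenmalm--Lindqvist--Seip.
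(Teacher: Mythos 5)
Your proposal is correct and follows essentially the same route the paper intends: the paper leaves the proof of Theorem \ref{Thm-davenport} implicit, but the preceding discussion of $\omega_p(\cdot,f_\lambda)$ via the decomposition $f_\lambda=g_\lambda+h_\lambda$, together with the Wintner/Hedenmalm--Lindqvist--Seip/Br\'emont Riesz-sequence facts, is set up precisely so that Theorem \ref{theo-dilated} gives $(ii)\Rightarrow(i)$ and the integrability statements, and Theorem \ref{theo-dilated2} gives $(i)\Rightarrow(ii)$ by contraposition. Your observation that the window $\bigl(2,(1-\lambda)^{-1}\bigr)$ of admissible exponents is nonempty exactly when $\lambda>1/2$ is the right way to see where the hypothesis enters.
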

\noindent {\bf Remark.} The fact that $(ii)\Rightarrow (i)$ follows
from Gaposhkin \cite{Gaposhkin67}, but we provide here integrability properties of the maximal functions. The Theorem says that when
$\sum_{k=1}^\infty |a_k|^2 =\infty$ then the series $\sum_{k=1}^\infty a_k f_\lambda(n_k x)$
does not converge almost everywhere. Hence, one may wonder whether we have almost everywhere divergence. For a large class of sequences  $(a_k)_{k\in \N}$ and $(n_k)_{k\in \N}$ a positive answer follows from
Theorem 2.4.14 of \cite{Gaposhkin68}. Finally, notice that when $1/2< \lambda \le 1$,  $\sum |a_n|^2<\infty$ is not sufficient for
$\sum a_n f_\lambda(n x)$ to converge almost everywhere.
But when $\lambda >1$,  $\sum |a_n|^2<\infty$ is sufficient, as a consequence of Carleson theorem.

\begin{prop}
Let $0<\lambda \le 1/2$ and $p<(1-\lambda)^{-1}$. For every Hadamard lacunary sequence
$(n_k)_{k\in \N}$ and every $(a_k)_{k\in \N}\in \ell^p$
 the series $\sum_{k\in \N}
a_k f_\lambda(n_k x)$ converges almost everywhere. Moreover,
$\sup_{n\in \N} |	
\sum_{k=1}^N a_k f_\lambda(n_k x)|\in L^p([0,1)$.
\end{prop}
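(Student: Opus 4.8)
The plan is to reduce the statement to Theorem \ref{theo-dilated}, the point being that the hypothesis $0<\lambda\le 1/2$ forces $p<2$. First I would note that $\lambda\le 1/2$ gives $1-\lambda\ge 1/2$, hence $(1-\lambda)^{-1}\le 2$; thus any exponent with $1<p<(1-\lambda)^{-1}$ automatically satisfies $p<2$, so that $p'=\min(2,p)=p$. Consequently the two exponents in Theorem \ref{theo-dilated} coincide, and the hypothesis $(a_k)_{k\in\N}\in\ell^p$ is precisely the requirement $(a_k)_{k\in\N}\in\ell^{p'}$ of that theorem. (The case $p\le 1$, if intended, follows from the case $1<p_0<(1-\lambda)^{-1}$ via the inclusions $\ell^p\subseteq\ell^{p_0}$ and $L^{p_0}([0,1))\subseteq L^p([0,1))$, so I treat only $1<p<(1-\lambda)^{-1}$, a range which is nonempty since $\lambda>0$.)

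Next I would record that $f_\lambda\in L^p([0,1],\B,\lambda)$ throughout this range. Using the decomposition $f_\lambda=g_\lambda+h_\lambda$ recalled before the statement, $h_\lambda$ is H\"older continuous, hence bounded and in every $L^p$, while $g_\lambda(x)=\kappa_\lambda\,{\rm Im}\big((1-{\rm e}^{2i\pi x})^{\lambda-1}\big)$ has a single algebraic singularity of order $|x|^{\lambda-1}$ at the origin; thus $g_\lambda\in L^p$ exactly when $p(1-\lambda)<1$, i.e. $p<(1-\lambda)^{-1}$, which is our assumption.

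The only genuine verification is condition \eqref{condition*} of Theorem \ref{theo-dilated}. Here I would combine the two regularity estimates already at hand: by H\"older continuity of $h_\lambda$ one has $\omega_p(\delta,h_\lambda)\le\omega_\infty(\delta,h_\lambda)=O(\delta^{\gamma})$ for some $\gamma>0$, and it was noted above that $\omega_p(\delta,g_\lambda)=O(\delta^{\lambda-(p-1)/p})$. The crucial observation is that the exponent $\alpha:=\lambda-1+\tfrac1p$ is strictly positive exactly because $p<(1-\lambda)^{-1}$; hence by subadditivity of $f\mapsto\omega_p(\delta,f)$ we get $\omega_p(\delta,f_\lambda)=O(\delta^{\min(\alpha,\gamma)})$, a positive power of $\delta$. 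Taking $\delta=2^{-n}$ yields geometric decay, so that
$$
\sum_{n\in\N}\frac{\omega_p(2^{-n},f_\lambda)}{n^{1/p}}\ \lesssim\ \sum_{n\in\N}\frac{2^{-n\min(\alpha,\gamma)}}{n^{1/p}}<\infty,
$$
and \eqref{condition*} holds.

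Finally I would invoke Theorem \ref{theo-dilated} with $f=f_\lambda$, this exponent $p$, and the given Hadamard lacunary sequence $(n_k)_{k\in\N}$: since $(a_k)_{k\in\N}\in\ell^p=\ell^{p'}$, the series $\sum_k a_k f_\lambda(n_k\cdot)$ converges $\lambda$-a.e. and, as $1<p<\infty$, the maximal function $\sup_{N}|\sum_{k=1}^N a_k f_\lambda(n_k\cdot)|$ belongs to $L^p([0,1))$. I do not expect a real obstacle: the whole content lies in the bookkeeping observation $p'=p$ (valid precisely because $\lambda\le 1/2$) and in the triviality that a power-type modulus of continuity makes \eqref{condition*} a convergent geometric-type series; the delicate estimate, namely the bound on $\omega_p(\delta,g_\lambda)$, has already been established.
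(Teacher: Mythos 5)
Your proposal is correct and follows exactly the route the paper intends: the proposition is stated as a direct consequence of Theorem \ref{theo-dilated} combined with the regularity estimates $\omega_p(\delta,g_\lambda)=O(\delta^{\lambda-\frac{p-1}{p}})$ and the H\"older continuity of $h_\lambda$ established just before Theorem \ref{Thm-davenport}, and your bookkeeping ($\lambda\le 1/2$ forces $p<2$, hence $p'=p$ and $\ell^p=\ell^{p'}$; the exponent $\lambda-1+\frac1p>0$ precisely when $p<(1-\lambda)^{-1}$) is exactly what makes the reduction work. The paper omits the verification entirely, so your write-up supplies the missing details faithfully.
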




\section{Convergence of lacunary series with respect to Riesz products}
\label{SectRP}

The classical Riesz products in harmonic analysis are defined as follows
\cite{Zygmund}. Let
$(\lambda_n)_{n\in \N}$ be a sequence of positive integers such that $\lambda_{n+1}\ge 3 \lambda_n$ for all $n\ge 0$ and $(c_n)_{n\in \N}$ be a sequence of complex numbers such that $|c_n|\le 1$ for all $n\ge 0$. 
Then we can define a Borel probability measure  on  $ \mathbb{T}=\mathbb{R}/\mathbb{Z}$, denoted by
\begin{equation}\label{RandomRiesz2}
	\mu_{c} =\prod_{n=0}^\infty (1 + \mbox{\rm Re} \ c_n
	e^{2\pi i \lambda_n t}).
\end{equation}
Actually the partial products of the above infinite product are positive
trigonometric polynomials which converge to a measure $\mu_{c}$ in the weak-* topology of $C(\mathbb{T})^*$.  
Suppose that we are given a sequence of Borel functions $f_n$ on $\mathbb{T}$, say bounded.  A problem associated to Riesz products is the
$\mu_c$-a.e. convergence of the following lacunary series
\begin{equation}\label{RieszS}
     \sum_{n=0}^{\infty} a_n \big( f_n(\lambda_n x) -  \mathbb{E}_{\mu_c} f_n(\lambda_n \cdot)
     \big) . 
\end{equation}

It was independently proved in \cite{Fan1993} and \cite{Peyriere1990} that when $f_n(x) =e^{2\pi i x}$, the series (\ref{RieszS}) converges $\mu_c$-a.e.
if and only if $\sum_{n=0}^\infty |a_n|^2<\infty$.   For more general functions $f_n$, such results are not known. But under the conditions that $\lambda_n$ divises $\lambda_{n+1}$ for all $n$ and that $f_n$ are analytic or more precisely 
$$
   \exists \rho \in (0, 1), \quad \sup_{j\in \mathbb{Z
    		}} \rho^{-|j|} \sup_{n\ge 0} |\widehat{f}_n (j)|<\infty,
$$
Peyriere \cite{Peyriere1975} proved that  $\sum_{n=1}^\infty |a_n|^2<\infty$ implies 
the $\mu_c$-a.e. convergence of the  series  (\ref{RieszS}).
By using Theorem \ref{theo-gen-decreasing}, we can improve Peyri\`ere's result as follows

\begin{theo}\label{theo-riesz}
Assume that $\sup_{n\in \N}|c_n|<1$. Let $(f_n)_{n\in \N}$ be functions
on $[0,1]$, such that there exists $C>0$ and $\epsilon >0$ such that
$$
     \sup_{n\ge 0}\omega (t, f_n) \le \frac{C}{|\log t|^{1/2+\epsilon}}\, ,
$$
Then, for every $(a_n)_{n\in \N}\subset \ell^2$, 
the series  $\sum_{n=0}^{\infty} a_n \big( f_n(\lambda_n x) -  \mathbb{E}_{\mu_c} f_n(\lambda_n \cdot)$ converges $\mu_c$-a.e.
\end{theo}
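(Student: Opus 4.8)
The plan is to apply Theorem \ref{theo-gen-decreasing} with $p=2$ to the random variables
$$
Z_n:=a_n\big(f_n(\lambda_n\cdot)-\E_{\mu_c}f_n(\lambda_n\cdot)\big)
$$
on the probability space $(\mathbb{T},\mu_c)$, using the decreasing filtration $\B_m:=\sigma\big(e^{2\pi i\lambda_l\,\cdot}:\ l\ge m\big)$, with $\B_\infty=\bigcap_m\B_m$ and $\B_0=\A$ (we may take $\A$ to be the $\mu_c$-completion of $\B_0$, which carries all the $Z_n$). Since $f_n(\lambda_n x)$ is a function of $e^{2\pi i\lambda_n x}$ alone, each $Z_n$ is $\B_n$-measurable; that is, $(Z_n)$ is adapted to $(\B_n)$, so by the remark following Theorem \ref{theo-gen-decreasing} the condition \eqref{series2+} on the higher order details holds trivially. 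By Lemma \ref{lemme-decreasing} it then suffices to verify the single condition \eqref{series2bis}, which for $p=p'=2$ reads
$$
\sum_{\ell=0}^\infty 2^{\ell/2}\Big(\sum_{n=0}^\infty \big\|\E(Z_n\,|\,\B_{n+2^\ell-1})\big\|_2^2\Big)^{1/2}<\infty .
$$

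The heart of the matter is the estimate $\big\|\E(Z_n\,|\,\B_{n+k})\big\|_2\le C\,|a_n|\,\omega(3^{-k},f_n)$ for $k\ge 1$, with $C$ depending only on $\sup_n|c_n|$, which I would establish in two steps. First, writing $f_n(\lambda_n x)-\E_{\mu_c}f_n(\lambda_n\cdot)=\sum_{j\ne0}\widehat{f_n}(j)\big(e^{2\pi i j\lambda_n x}-\E_{\mu_c}e^{2\pi i j\lambda_n\cdot}\big)$, I would show that the centred low harmonics are annihilated by the tail conditional expectation, namely $\E\big(e^{2\pi i j\lambda_n\cdot}-\E_{\mu_c}e^{2\pi i j\lambda_n\cdot}\,\big|\,\B_{n+k}\big)=0$ whenever $|j|\lambda_n<\lambda_{n+k}$, in particular for all $|j|<3^k$. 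This rests on the lacunarity $\lambda_{n+1}\ge 3\lambda_n$: each integer frequency has a unique expansion $\sum_l\varepsilon_l\lambda_l$ with $\varepsilon_l\in\{-1,0,1\}$, so that for any tail frequency $\nu$ supported on $\{\lambda_l:\,l\ge n+k\}$ the Riesz-product coefficient factorizes, $\widehat{\mu_c}(\nu-j\lambda_n)=\widehat{\mu_c}(-j\lambda_n)\,\widehat{\mu_c}(\nu)$, once $|j|\lambda_n<\lambda_{n+k}$; testing against the characters $e^{2\pi i\nu\cdot}$ then makes the inner product collapse to exactly the centring constant, giving the vanishing. Second, only the harmonics $|j|\ge 3^k$ survive, so $\E(Z_n\,|\,\B_{n+k})$ is a fibre average of $f_n$ — the precise analogue of the contraction estimate of Lemma \ref{contraction} and of the transfer-operator identity used for the ergodic series in Theorem \ref{main} — whose $L^2(\mu_c)$-norm I would bound by the Lebesgue Fourier tail, $\big(\sum_{|j|\ge 3^k}|\widehat{f_n}(j)|^2\big)^{1/2}\lesssim\omega_2(3^{-k},f_n)\le\omega(3^{-k},f_n)$. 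Here the hypothesis $\sup_n|c_n|<1$ keeps the conditional weights of $\mu_c$ bounded away from $0$ and $\infty$, so that $C$ is uniform in $n$ and $k$. Letting $k\to\infty$ in this estimate also yields $\E(Z_n\,|\,\B_\infty)=0$, i.e. the hypothesis \eqref{H1} required to invoke Theorem \ref{theo-gen-decreasing}.

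Granting the key estimate, the verification of \eqref{series2bis} is routine bookkeeping. The hypothesis $\sup_n\omega(t,f_n)\le C|\log t|^{-1/2-\epsilon}$ gives $\sup_n\omega(3^{-k},f_n)\le C'\,k^{-1/2-\epsilon}$, whence for each $\ell$
$$
\Big(\sum_{n=0}^\infty\big\|\E(Z_n\,|\,\B_{n+2^\ell-1})\big\|_2^2\Big)^{1/2}\le C''\,\big(2^\ell\big)^{-1/2-\epsilon}\,\Big(\sum_{n}|a_n|^2\Big)^{1/2},
$$
and therefore the left-hand side of \eqref{series2bis} is $\lesssim\|(a_n)\|_{\ell^2}\sum_\ell 2^{\ell/2}\,2^{-\ell(1/2+\epsilon)}=\|(a_n)\|_{\ell^2}\sum_\ell 2^{-\ell\epsilon}<\infty$, precisely because $\epsilon>0$. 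Thus \eqref{series2bis} holds, hence \eqref{series1+} follows from Lemma \ref{lemme-decreasing}, and Theorem \ref{theo-gen-decreasing} yields the $\mu_c$-a.e. (and $L^2(\mu_c)$) convergence of $\sum_n Z_n$, which is the assertion.

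The main obstacle is the second step of the key estimate: controlling the $L^2(\mu_c)$-norm of the surviving tail projection by the Lebesgue Fourier tail of $f_n$. A naive contraction bound combined with an attempt to compare $L^2(\mu_c)$ with $L^2(dx)$ fails, since the marginal law of $\lambda_n x$ under a Riesz product need not have bounded density (for instance in Peyri\`ere's divisible case $\lambda_l\mid\lambda_{l+1}$). The correct route is to read $\E(\cdot\,|\,\B_{n+k})$ as an averaging (transfer) operator over the $\mu_c$-conditional fibres and to use $\sup_n|c_n|<1$ to keep these averages non-degenerate; this is exactly where the lacunarity and the bound on the $c_n$ enter, and where the analogy with the ergodic-series estimate of Section \ref{SectES} must be made quantitative.
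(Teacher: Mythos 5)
Your overall framework is the right one --- the decreasing filtration, adaptedness killing \eqref{series2+}, reduction to \eqref{series2bis} via Lemma \ref{lemme-decreasing}, and the final summation over $\ell$ are exactly what the paper does --- but the proof has a genuine gap at its central step, the estimate $\|\E(Z_n\,|\,\B_{n+k})\|_{L^2(\mu_c)}\lesssim |a_n|\,\omega(3^{-k},f_n)$. Neither of the two sub-steps you offer for it goes through. For the first (annihilation of low harmonics), you test the conditional expectation only against characters $e^{2\pi i\nu\cdot}$ with $\nu=\sum_{l\ge n+k}\varepsilon_l\lambda_l$, $\varepsilon_l\in\{-1,0,1\}$; but these do not span $L^2(\B_{n+k},\mu_c)$. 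In the divisible case (which the paper explicitly retains --- ``we keep the divisibility condition on $\{\lambda_n\}$'') one has $\B_{n+k}=\sigma(e^{2\pi i\lambda_{n+k}\cdot})$, whose dense span consists of $e^{2\pi i m\lambda_{n+k}\cdot}$ for \emph{all} $m\in\Z$, and for $|m|\ge 2$ the frequency $m\lambda_{n+k}$ is generally not a $\{-1,0,1\}$-combination of the $\lambda_l$, so the factorization identity you invoke is not available for the full test class. (Without divisibility the situation is worse still, and the fact that your argument never uses divisibility, while the paper states that dropping it is an open problem, is itself a warning sign.) For the second sub-step, you propose to bound the surviving projection by the Lebesgue Fourier tail $\big(\sum_{|j|\ge 3^k}|\widehat{f_n}(j)|^2\big)^{1/2}$, but $\E(\cdot\,|\,\B_{n+k})$ is an average over the fibre $\{y:\lambda_{n+k}y\equiv\lambda_{n+k}x\}$ weighted by the normalized partial Riesz product, not by the uniform measure; making this small requires a quantitative equidistribution of those weights, and --- as you yourself concede in your closing paragraph --- the naive comparison of $L^2(\mu_c)$ with $L^2(dx)$ fails. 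So the key inequality is asserted rather than proved.

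The paper closes exactly this gap by a different route: it transports the problem, via the divisibility $\lambda_n\mid\lambda_{n+1}$, to a non-homogeneous symbolic space $X=\prod_n S_n$ with $\ell_{n+1}=\lambda_{n+1}/\lambda_n$, identifies $\mu_c$ with the equilibrium state of the normalized potentials $g_{n+1}(x)=\ell_{n+1}^{-1}(1+\operatorname{Re}c_n e^{2\pi i\lambda_n x})$ (here $\sup_n|c_n|<1$ yields \eqref{cond-gn}), and identifies $\E(\cdot\,|\,\B_m)$ with the averaging operator $P_m$. The needed decay is then the Fan--Pollicott estimate \eqref{est-Pn}, $\|P_mf_n\|_\infty\le C(\log(1+m-n))^{1+\alpha}/(m-n)^{\alpha}$ with $\alpha=1/2+\epsilon$ --- note the logarithmic loss, which shows that even the correct bound is weaker than the one you claim, and which is absorbed precisely because $\epsilon>0$. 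If you want to salvage your approach you must either reprove an estimate of the type \eqref{est-Pn} for the fibre averages, or cite it; the Fourier-coefficient factorization alone will not produce it.
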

We emphasize that we keep the divisibility condition on $\{\lambda_n\}$. Otherwise, more efforts are needed and for the moment we don't succeed.

\smallskip

We can do little more than Riesz products. 
Actually the above result holds not only for Riesz products but also  for non-homogeneous equilibrium states studied in \cite{FP}.  We shall 
now consider this situation. The proof of the Theorem will be given 
at the end of the paper.

\medskip

Let us recall the definition taken from \cite{FP}.
Let $\{S_n\}_{n\ge 1}$ be a sequence of finite sets of discrete topology. Assume that $\ell_n:= \mbox{\rm card } S_n \ge 2$ for all
$n \ge 1$. Consider the infinite product space $X :=
\prod_{n=1}^{\infty} S_n$ equipped with the product
topology. A compatible metric on $X$ may be defined as 
$$
d(x; y) = \frac{1}{\ell_1\ell_2\cdots \ell_n}
$$
where $ n =n(x, y) = \sup\{j \ge 1: x_i=y_i, \forall i=1,2 \cdots, j\}$ (with convention $\sup \emptyset = 0$). Let
$A= \{A_n\}_{n\ge 1}$ be a sequence of matrices such that 
$A_n \in M_{S_N, S_{n+1}}$,  meaning that the
rows of $A$ are indexed by $S_n$ and the columns by $S_{n+1}$. 
Suppose the entries of $A_n$ are
$0$ or $1$. Such a matrix is called an incidence matrix. We define a subspace $X_A$ of $X$ by
$$
X_A = \{x = (x_n) \in X: \forall n\ge 1, \ A_n(x_n, x_{n+1}) = 1\}.
$$
We call $X_A$ a non-homogeneous symbolic space. 
We always suppose that there exists an integer $M \ge  0$ such that
$$
\forall n\ge 1, \ \ \ \prod_{j=n}^{n+M}A_j >0
$$
($A > 0$ means that the entries of $A$ are all strictly  positive).
In this case, $X_A$ is said to be
transitive. If  all entries of every $A_n$ are equal to 1, 
we have  $X_A = X$.  We call $X$ the
full symbolic space.

A sequence $G=\{g_n\}_{n\ge 1}$ of non-negative functions defined on $X_A$ is called a
sequence of potentials if for any $\ge 1$,  $g_n(x)$ does not depend on the first $n-1$
coordinates of $x$ (so, we sometimes write $g_n(x) = g_n(x_n, x_{n+1}, \cdots))$.  It is said
to be normalized if for any $n \ge 1$, 
$$
\sum_{ y_n: A_n(y_n, x_{n+1})=1}
   g_n(y_n, x_{n+1}, \cdots) = 1 \quad (\forall x = (x_n) \in X_A).
$$
For $n \ge 1$,  let
$$ 
G_n(x) = \prod_{j=1}^n  g_n(x).
$$
Then define a sequence of averaging operators $P_n: C(X_A) \to C(X_A)$, where $C(X_A)$ is
the space of all continuous functions on $X_A$, by
$$
P_nf(x) =
\sum_{y_1, \cdots, y_n} G_n(y_1, \cdots, y_n, x_{n+1}, \cdots) 
f(y_1, \cdots, y_n, x_{n+1}, \cdots)
$$
where the sum is taken over all sequences $(y_1, \cdots,y_n)$ such that
$$
A_1(y_1, y_2)= \cdots = A_{n-1}(y_{n-1}, y_n)=A_n(y_n, x_{n+1}) = 1.
$$
It is easy to check that  $P_n$ is positive and $P_n1 = 1$. Therefore,  the adjoint operator
$P_n^*: M^+(X_A) \to M^+(X_A)$
admits fixed points, where $M^+(X_A)$
 is the space of all Borel
probability measures on $X_A$. A measure  $\mu \in M^+(X_A)$ is called a {\em (non-homogeneous)
equilibrium state} associated to $G = \{g_n\}$ if 
$P_n^* \mu = \mu$ for all $n\ge 1$.

\begin{theo}\label{gen-theo-FP}
[\cite{FP}] Let $G = \{g_n\}$ be a normalized sequence of potentials defined  on a
transitive symbolic space $X_A$. \\
\indent {\rm (a)}  The set of all equilibrium states associated to $G$ is a non-empty convex compact
set.\\
\indent {\rm (b)}   There is a unique equilibrium state if and only if for any $f \in  C(X_A)$, $P_nf(x)$
converges uniformly (in $x$) to a constant as $n \to \infty $.\\
\indent {\rm (c)}  There is a unique equilibrium state if
\begin{equation}\label{cond-eq}
   \inf_{n\ge 1} \inf_{x \in X_A} g_n(x) >0;
   \quad
\sup\left\{ \frac{G_n(x)}{G_n(y)}: x_1=y_1, \cdots, x_n=y_n \right\}
<\infty.
\end{equation}
\indent {\rm (c)} Under the condition in (c), there exist constants 
$D_1$ and $D_2$ such that
$$
   D_1 G_n(x) \le \mu(I_n(x))\le D_2 G_n(x)
$$
for all $x \in  X_A$ and all $n \ge  1$, where 
$I_n(x) = \{y\in X_A:  y_j = x_j \forall 1 \le j \le n\}$.
\end{theo}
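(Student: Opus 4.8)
The plan is to prove the four assertions in turn, with the whole argument resting on one algebraic identity for the averaging operators. \textbf{Key lemma.} First I would establish that the normalization $\sum_{y_n}g_n(y_n,x_{n+1},\dots)=1$, together with the fact that $g_j$ depends only on the coordinates $x_j,x_{j+1},\dots$, forces
\[
P_mP_n=P_{\max(m,n)}\qquad(\forall m,n\ge 1),
\]
so that $\{P_n\}$ is a commuting family of projections and, dually, $P_m^*P_n^*=P_{\max(m,n)}^*$. The verification is a direct computation: in $P_{\max}$ one sums the weight $G$ over the innermost coordinates, and each such summation collapses to $1$ by normalization, matching the composite. This identity is what makes the non-homogeneous setting tractable and drives parts (a) and (b).

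\textbf{(a).} Since $X_A$ is compact metric, $M^+(X_A)$ is weak-$*$ compact and convex, and each $P_n^*$ is weak-$*$ continuous and affine. Fix any $\nu_0$ and set $\nu_N=P_N^*\nu_0$; by the key lemma $P_m^*\nu_N=P_m^*P_N^*\nu_0=P_N^*\nu_0=\nu_N$ for every $m\le N$. Any weak-$*$ limit point $\mu$ of $(\nu_N)$ then satisfies $P_m^*\mu=\mu$ for all $m$, i.e. $\mu$ is an equilibrium state, giving non-emptiness. The full set of equilibrium states is $\bigcap_m\{\mu:P_m^*\mu=\mu\}$, an intersection of weak-$*$ closed affine subsets with the compact convex set $M^+(X_A)$, hence compact and convex.

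\textbf{(b).} One implication is immediate: if $P_nf\to c(f)$ uniformly then $\int f\,d\mu=\int P_nf\,d\mu\to c(f)$ for every equilibrium $\mu$ (using $P_n^*\mu=\mu$), so $\mu$ is determined and unique. For the converse I would use the limit-point principle: for any $x^{(k)}$ and $n_k\to\infty$, the measures $P_{n_k}^*\delta_{x^{(k)}}$ are, by the key lemma, fixed by every $P_m^*$ with $m\le n_k$, so all their weak-$*$ limit points are equilibrium states, hence equal to the unique $\mu$. Thus $P_{n_k}f(x^{(k)})=\int f\,d\bigl(P_{n_k}^*\delta_{x^{(k)}}\bigr)\to\int f\,d\mu$ along a subsequence; were $P_nf\to\int f\,d\mu$ not uniform, a violating sequence $(n_k,x^{(k)})$ would contradict this.

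\textbf{(c) and (d).} Under $\delta_0:=\inf_{n,x}g_n(x)>0$ and bounded distortion $G_n(x)/G_n(y)\le D$ on cylinders, together with the transitivity block length $M$, I would prove a uniform Doeblin/Dobrushin minorization. Writing $P_{n+M+1}f=P_{n+M+1}(P_nf)$ and summing out the first $n$ coordinates, one expresses $P_{n+M+1}(P_nf)(x)$ as an average of the values of $P_nf$ over the block coordinates $x_{n+1},\dots,x_{n+M+1}$ with weights bounded below (uniformly in $x$) of order $\delta_0^{M+1}/D$; here transitivity makes every block admissible and $\inf g_n>0$ supplies the lower bound. This yields $\mathrm{osc}(P_{n+M+1}f)\le\theta\,\mathrm{osc}(P_nf)$ with a fixed $\theta=\theta(\delta_0,D,M)<1$, so $\mathrm{osc}(P_nf)\to 0$ geometrically and $(P_nf)$ is uniformly Cauchy, giving $P_nf\to$ const uniformly; uniqueness then follows from (b). For the Gibbs bounds (d) I would use $P_n^*\mu=\mu$ to write $\mu(I_n(x))=\int P_n\mathbf{1}_{I_n(x)}\,d\mu$, compute $P_n\mathbf{1}_{I_n(x)}(z)=G_n(x_1,\dots,x_n,z_{n+1},\dots)\,\mathbf{1}[A_n(x_n,z_{n+1})=1]$, and apply bounded distortion to replace this $G_n$-value by $G_n(x)$ up to the factor $D^{\pm1}$; the lower bound additionally uses that the $\mu$-measure of admissible continuations is bounded below, again via transitivity.

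\textbf{Main obstacle.} The enabling step is the identity $P_mP_n=P_{\max(m,n)}$, but the genuinely delicate part is the uniform contraction in (c): because the potentials are non-homogeneous one cannot invoke a single transfer operator's spectral gap, so the Dobrushin coefficient must be controlled by a block argument whose length is dictated by the transitivity constant $M$, and it is precisely there that both $\inf_{n,x}g_n(x)>0$ and bounded distortion are indispensable.
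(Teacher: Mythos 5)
First, a point of reference: the paper does not prove Theorem \ref{gen-theo-FP} at all --- it is quoted verbatim from Fan--Pollicott \cite{FP}, so there is no in-paper argument to compare yours against. Judged on its own merits, your reconstruction is sound in most places: the identity $P_mP_n=P_{\max(m,n)}$ is correct (summing out the innermost coordinates collapses to $1$ by the telescoping normalization, with the admissibility constraints built into the sums), and your treatments of (a), (b) and of the Gibbs estimate (d) via $\mu(I_n(x))=\int P_n\mathbf{1}_{I_n(x)}\,d\mu$ and bounded distortion are essentially the standard proofs and are fine.

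There is, however, a genuine gap in (c). The inequality $\mathrm{osc}(P_{n+M+1}f)\le\theta\,\mathrm{osc}(P_nf)$ with a fixed $\theta<1$ is false for general continuous $f$. After summing out $y_1,\dots,y_n$, the quantity $P_{n+M+1}(P_nf)(x)$ is an average of the values $P_nf(y_{n+1},\dots,y_{n+M+1},x_{n+M+2},\dots)$ over the block coordinates only; the dependence of $P_nf$ on the tail $x_{n+M+2},x_{n+M+3},\dots$ is not averaged at all and passes through untouched. Concretely, if $f(x)=\phi(x_N)$ with $N>n+M+1$, then $P_{n+M+1}f=P_nf=f$ and the oscillation does not decrease. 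What the Doeblin/overlap argument actually contracts is the dependence on the block coordinates, so the correct one-step estimate has the form $\mathrm{osc}(P_{n+M+1}f)\le\theta\,\mathrm{osc}(P_nf)+C\,\mathrm{var}_{n+M+1}(P_nf)$ (plus distortion terms coming from the $x$-dependence of the weights $g_j(y_j,\dots,y_{n+M+1},x_{n+M+2},\dots)$), and one must then feed in $\mathrm{var}_k(f)\to 0$ for continuous $f$ together with control of $\mathrm{var}_k(G_n)/G_n$ to conclude $\mathrm{osc}(P_nf)\to 0$. This bookkeeping with the variations is precisely where the work in \cite{FP} lies (and is why the quantitative version used later in the paper assumes $\mathrm{var}_m(\log g_n)\le A(m-n)^{-\alpha}$); as written, your sketch skips it.
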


For a function $f$ defined on $X_A$ and for $n\ge 1$, we define the
$n$-th variation of $f$ by
$$
 \mbox{\rm var}_n(f) =\sup\{|f(x)- f(y)|: x_1=y_1, \cdots, x_n =y_n\}. 
$$

A careful inspection of the proof of Theorem 4 of \cite{FP} gives the 
next theorem, which corresponds essentially to the case where 
$\alpha =1+\epsilon$ for $\epsilon>0$.  
\begin{theo}
	Let $\{g_n\}$ be a normalized sequence of potentials defined on a transitive symbolic space $X_A$.  Suppose there are constants $A > 0$ and $\alpha > 0$ such that for every $m > n > 1$,
\begin{equation}\label{cond-gn}
\mbox{\rm var}_m(\log g_n) \le \frac{A}{(m-n)^{\alpha}}\, .
\end{equation}
Then there is a unique equilibrium state $\mu$. 

\smallskip

Let $\{f_n\}$ be a sequence of functions such that $f_n$ depend only upon $x_{n+1}, x_{n+2}, \cdots$. Assume moreover that  there exists $B>0$ such that for every $m>n>1$,
$$
\|f_n\|_\infty \le B; \quad \mbox{\rm var}_m(f_n) \le \frac{B}{(m-n)^{\alpha}}.
$$
Then there exists $C>0$ such that for every $m>n>1$,
\begin{equation}\label{est-Pn}
\|P_mf_n\|_\infty \le C\frac{(\log (1+m-n))^{1+\alpha}}{(m-n)^{\alpha}}
\, .
\end{equation}
In particular, if $\alpha>1/2$, the series
\begin{equation}\label{SeriesG}
      \sum_{n=1}^\infty a_n \left( f_n(x) - \int f_n d\mu\right)
\end{equation}
converges $\mu$-a.e. if $\sum_{n=1}^\infty |a_n|^2<\infty$.
\end{theo}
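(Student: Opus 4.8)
The plan is to realize the averaging operators $P_n$ as conditional expectations for a suitable decreasing filtration and then to invoke Theorem~\ref{theo-gen-decreasing}. On $X_A$ I would take the decreasing filtration $\B_n:=\sigma(x_{n+1},x_{n+2},\dots)$, so that $\B_0=\A$ is the Borel $\sigma$-algebra and $\B_\infty=\bigcap_n\B_n$ is the tail $\sigma$-algebra. Since $P_nf$ depends only on $x_{n+1},x_{n+2},\dots$ and $P_n(hf)=h\,P_nf$ whenever $h$ is $\B_n$-measurable, the fixed point relation $P_n^*\mu=\mu$ gives $\int h\,P_nf\,d\mu=\int P_n(hf)\,d\mu=\int hf\,d\mu$ for every such $h$; hence $\mathbb{E}_n:=\mathbb{E}(\cdot\,|\,\B_n)=P_n$. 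Setting $\bar f_n:=f_n-\int f_n\,d\mu$ and $Z_n:=a_n\bar f_n$, each $Z_n$ is bounded (hence in $L^2$) and $\B_n$-measurable, so $(Z_n)$ is adapted to $(\B_n)$.

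Next I would dispose of uniqueness and the tail. Under \eqref{cond-gn} the regularity of the potentials forces $P_nf$ to converge uniformly to a constant for every $f\in C(X_A)$ (this is the same contraction input that yields \eqref{est-Pn} below, applied to a fixed $f$ and extended by density), so Theorem~\ref{gen-theo-FP}(b) provides a unique equilibrium state $\mu$. Equivalently $\mathbb{E}(f\,|\,\B_n)\to\int f\,d\mu$ uniformly, which shows that $\B_\infty$ is $\mu$-trivial. In particular $\mathbb{E}_\infty(Z_n)=\mathbb{E}(Z_n\,|\,\B_\infty)=\int Z_n\,d\mu=0$, which is exactly the standing hypothesis \eqref{H1} required by Theorem~\ref{theo-gen-decreasing}.

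The technical heart is the estimate \eqref{est-Pn}, which I would obtain by a careful inspection of the proof of Theorem~4 of \cite{FP}. The idea is to split $\bar f_n=\mathbb{E}(\bar f_n\,|\,x_{n+1},\dots,x_{n+j})+r_{n,j}$, where the remainder obeys $\|r_{n,j}\|_\infty\le\mbox{\rm var}_{n+j}(\bar f_n)\le B/j^{\alpha}$, while the first term is mean-zero and depends on only finitely many coordinates; applying $P_m$ to it contracts at the rate dictated by the variations of $\log g_k$ controlled by \eqref{cond-gn}. Balancing these two contributions and optimizing the cut-off $j$ in terms of $d:=m-n$ produces the polylogarithmic factor, i.e. $\|P_m\bar f_n\|_\infty\le C(\log(1+d))^{1+\alpha}/d^{\alpha}$ (here \eqref{est-Pn} is applied to the centered $\bar f_n$, which satisfies $\mbox{\rm var}_m(\bar f_n)=\mbox{\rm var}_m(f_n)$ and $\|\bar f_n\|_\infty\le 2B$). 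I expect this to be the main obstacle: for $1/2<\alpha\le 1$ the variations are not summable, so the contraction of $P_m$ is only polynomial and the window optimization must be handled with care.

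Granting \eqref{est-Pn}, the convergence of \eqref{SeriesG} is a clean application of the martingale machinery. Because $(Z_n)$ is adapted, the higher-order condition \eqref{series2+} holds trivially, and by Lemma~\ref{lemme-decreasing} with $p=p'=2$ the lower-order condition \eqref{series1+} follows once we verify \eqref{series2bis}, namely
\begin{equation*}
\sum_{\ell=0}^\infty 2^{\ell/2}\Big(\sum_{n=0}^\infty\|\mathbb{E}_{n+2^\ell-1}Z_n\|_2^2\Big)^{1/2}<\infty .
\end{equation*}
For $\ell\ge 1$, applying \eqref{est-Pn} with $d=2^\ell-1$ (so $\log(1+d)=\ell\log 2$ and $d^{\alpha}\ge 2^{(\ell-1)\alpha}$) gives $\|\mathbb{E}_{n+2^\ell-1}Z_n\|_2=|a_n|\,\|P_{n+2^\ell-1}\bar f_n\|_2\le|a_n|\,\|P_{n+2^\ell-1}\bar f_n\|_\infty\le C'|a_n|\,\ell^{1+\alpha}2^{-\ell\alpha}$, while the term $\ell=0$ contributes only a constant multiple of $\|(a_n)\|_{\ell^2}$ since $P_n\bar f_n=\bar f_n$. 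Summing over $n$ yields $\big(\sum_n\|\mathbb{E}_{n+2^\ell-1}Z_n\|_2^2\big)^{1/2}\le C'\ell^{1+\alpha}2^{-\ell\alpha}\|(a_n)\|_{\ell^2}$, so the displayed series is dominated by $C'\|(a_n)\|_{\ell^2}\sum_\ell\ell^{1+\alpha}2^{\ell(1/2-\alpha)}$. The polynomial factor is harmless and the geometric factor $2^{\ell(1/2-\alpha)}$ is summable precisely when $\alpha>1/2$. Theorem~\ref{theo-gen-decreasing} then delivers the $\mu$-a.e. convergence of $\sum_n Z_n=\sum_n a_n\big(f_n-\int f_n\,d\mu\big)$ whenever $\sum_n|a_n|^2<\infty$, as claimed.
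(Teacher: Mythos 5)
Your proposal is correct and follows essentially the same route as the paper: identify $P_n$ with $\mathbb{E}(\cdot\,|\,\B_n)$ for the decreasing coordinate filtration, note the series is adapted so only the lower-order condition needs checking via Lemma~\ref{lemme-decreasing}, defer \eqref{est-Pn} to the proof of Theorem~4 of \cite{FP}, and plug in $d=2^\ell$ to get summability exactly when $\alpha>1/2$. If anything, you are slightly more careful than the paper in verifying $\mathbb{E}_\infty Z_n=0$ via tail triviality and in correctly naming which of the two conditions of Lemma~\ref{lemme-decreasing} is being checked.
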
 

\begin{proof} 
	The condition \eqref{cond-gn}  $\{g_n\}$ ensures that condition 
	\eqref{cond-eq} holds, hence  the uniqueness of the equilibrium state, by the Theorem \ref{gen-theo-FP}. 
	
	\smallskip
	
	The fact that \eqref{est-Pn} holds may be proved as in the proof of Theorem 4 of \cite{FP}, see pages 111-112 there.
	
	\smallskip
	
Assume that $\alpha>1/2$.	Let $\mathcal{B}_n$ be the $\sigma$-field generated by the
	coordinate functions $x \mapsto x_j$ ($j=n+1, n+2, \cdots$). Then
	our series (\ref{SeriesG}) is adapted to the decreasing filtration
	$\{\mathcal{B}_n\}$. Then we will apply Theorem \ref{theo-gen-decreasing}. By Lemma \ref{lemme-decreasing},  what we have to check is just the condition (\ref{series1bis}), because  the condition (\ref{series2bis})
	is trivially satisfied by adapted series.
	
	Notice that $\mathbb{E}(f|\mathcal{B}_n) = P_nf$.  So,  
	taking $Z_n = a_n f_n$, we infer that for every $n,\ell\ge 0$,
	$$
	   \|E_{n+2^{\ell}-1}
	   Z_{n}\|_2 \le  |a_n|  (\|P_{n+2^{\ell}-1} f_n\|_2 
	   \le C\frac{|a_n| \ell^{1+\alpha}}{ 2^{\alpha \ell }}
	$$	
	for some $C>0$.
	Then
	$$
	\sum_{\ell=0}^\infty2^{\ell/2} \Big( \sum_{n=0}^\infty \|\E_{n+2^\ell-1}
	Z_{n}\|_2^2\Big)^{1/2}
	\le C
		\sum_{\ell=0}^\infty \frac{\ell^{1+\alpha}}{ 2^{\ell (\alpha-1/2)}} \Big( \sum_{n=0}^\infty |a_n|^2\Big)^{1/2} <\infty.
	$$
	Thus the condition (\ref{series1+}) is verified. 
\end{proof}


\medskip

\noindent {\bf Proof of Theorem \ref{theo-riesz}.} 
We can identify the full symbolic space $X$ with the circle
$\mathbb{T}$ by the map from $X$ to $\mathbb{T}$:
$$   
(x_n) \mapsto \sum_{n=1}^\infty \frac{x_n}{\ell_1 \cdots \ell_n}.
$$
Then the Riesz product is nothing but the equilibrium state associated to
$$
   \forall n\ge 0, \ \ \ 
    g_{n+1}(x) =\ell_{n+1}^{-1} (1 +  \mbox{\rm Re} c_n e^{2\pi i \lambda_n x})
$$
where $\ell_{n+1}= \lambda_{n+1}/\lambda_n$. The assumption 
\eqref{cond-gn} is easily verified, using that $\sup_{n\in \N}|c_n|<1$. 
\hfill $\square$

 \bigskip

 \noindent {\bf Acknowledgement.} The authors are very grateful to
 Istv\'an Berkes for useful discussions and relevant references
 concerning dilated series. They are also thankful to Michel Weber
 for valuable discussions on the same topic.

\end{document}